\begin{document}
\title{Dynamical Optimal Transport on Discrete Surfaces}

\author{Hugo Lavenant}
\affiliation{%
  \institution{Universit\'e Paris-Sud}
  \city{Orsay, France}
}
\author{Sebastian Claici}
\affiliation{
 \institution{Massachusetts Institute of Technology}
 \city{Cambridge, MA}
}
\author{Edward Chien}
\affiliation{
 \institution{Massachusetts Institute of Technology}
 \city{Cambridge, MA}
}
\author{Justin Solomon}
\affiliation{
 \institution{Massachusetts Institute of Technology}
 \city{Cambridge, MA}
}


\begin{abstract}
We propose a technique for interpolating between probability distributions on discrete surfaces, based on the theory of optimal transport. Unlike previous attempts that use linear programming, our method is based on a dynamical formulation of quadratic optimal transport proposed for flat domains by Benamou and Brenier \shortcite{Benamou2000}, adapted to discrete surfaces. Our structure-preserving construction yields a Riemannian metric on the (finite-dimensional) space of probability distributions on a discrete surface, which translates the so-called Otto calculus to discrete language. From a practical perspective, our technique provides a smooth interpolation between distributions on discrete surfaces with less diffusion than state-of-the-art algorithms involving entropic regularization. Beyond interpolation, we show how our discrete notion of optimal transport extends to other tasks, such as distribution-valued Dirichlet problems and time integration of gradient flows.
\end{abstract}

%
%

\begin{CCSXML}
<ccs2012>
<concept>
<concept_id>10010147.10010371.10010396.10010402</concept_id>
<concept_desc>Computing methodologies~Shape analysis</concept_desc>
<concept_significance>500</concept_significance>
</concept>
<concept>
<concept_id>10002950.10003714.10003715.10003722</concept_id>
<concept_desc>Mathematics of computing~Interpolation</concept_desc>
<concept_significance>300</concept_significance>
</concept>
<concept>
<concept_id>10002950.10003714.10003716.10011138.10010043</concept_id>
<concept_desc>Mathematics of computing~Convex optimization</concept_desc>
<concept_significance>300</concept_significance>
</concept>
<concept>
<concept_id>10002950.10003714.10003727.10003729</concept_id>
<concept_desc>Mathematics of computing~Partial differential equations</concept_desc>
<concept_significance>100</concept_significance>
</concept>
</ccs2012>
\end{CCSXML}

\ccsdesc[500]{Computing methodologies~Shape analysis}
\ccsdesc[300]{Mathematics of computing~Interpolation}
\ccsdesc[300]{Mathematics of computing~Convex optimization}
\ccsdesc[100]{Mathematics of computing~Partial differential equations}

\begin{teaserfigure}
\centering
\begin{tabular}{cccccccc}
	\includegraphics[width=.095\textwidth]{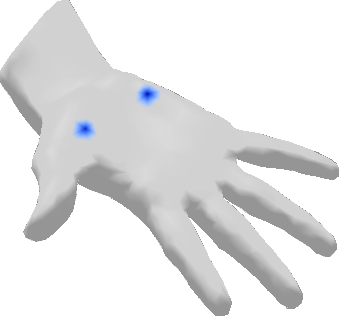} &
	\includegraphics[width=.095\textwidth]{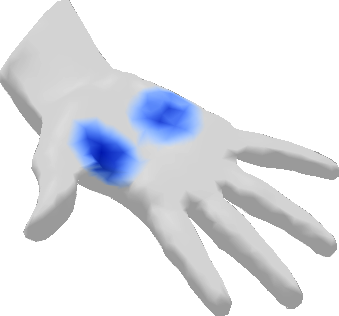} &
	\includegraphics[width=.095\textwidth]{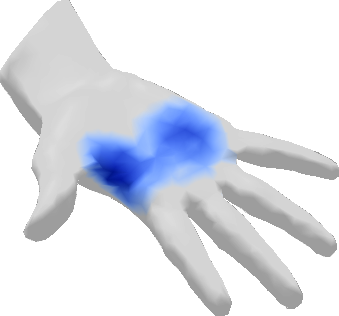} &
	\includegraphics[width=.095\textwidth]{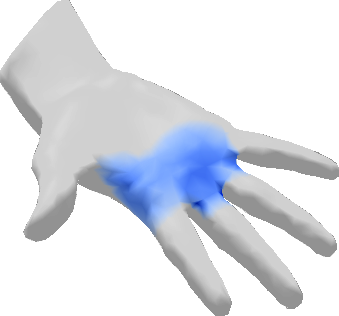} &
	\includegraphics[width=.095\textwidth]{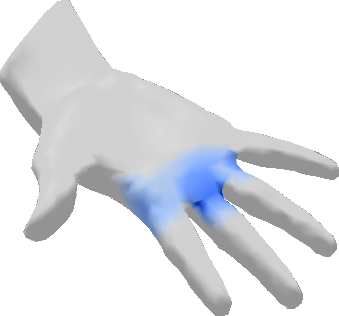} &
	\includegraphics[width=.095\textwidth]{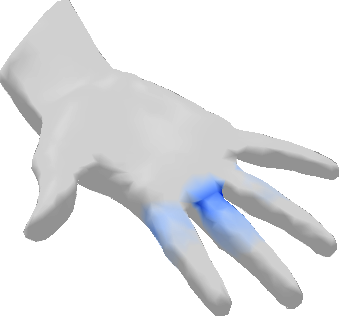} &
	\includegraphics[width=.095\textwidth]{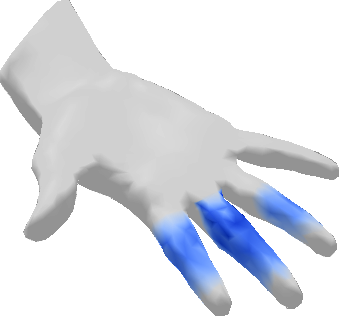} &
	\includegraphics[width=.095\textwidth]{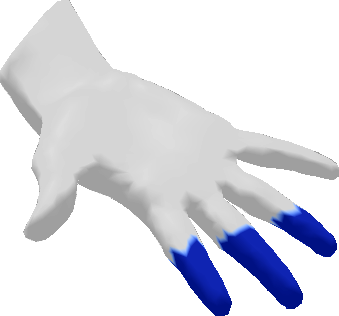} \\
	\includegraphics[width=.095\textwidth]{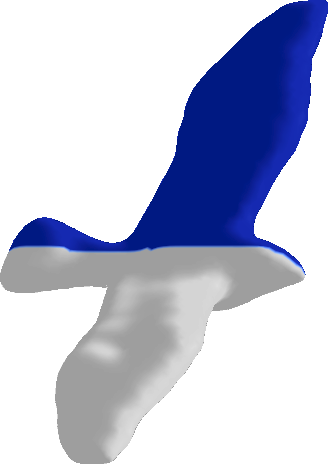} &
	\includegraphics[width=.095\textwidth]{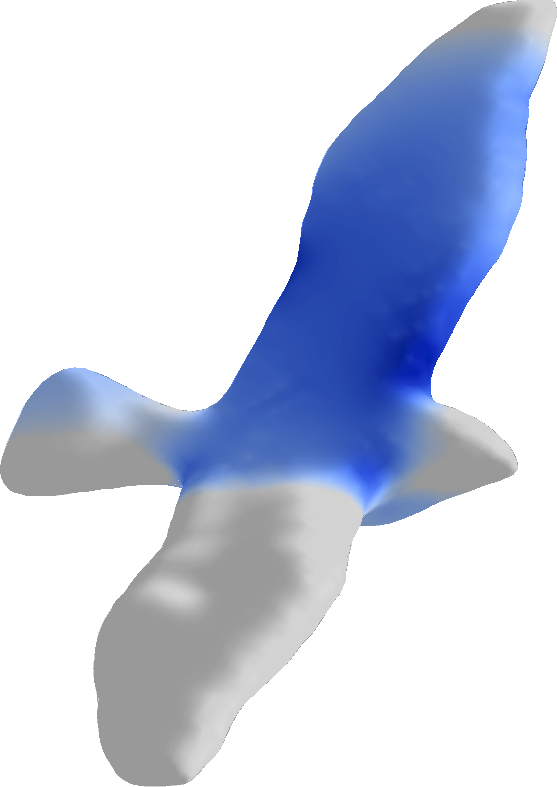} &
	\includegraphics[width=.095\textwidth]{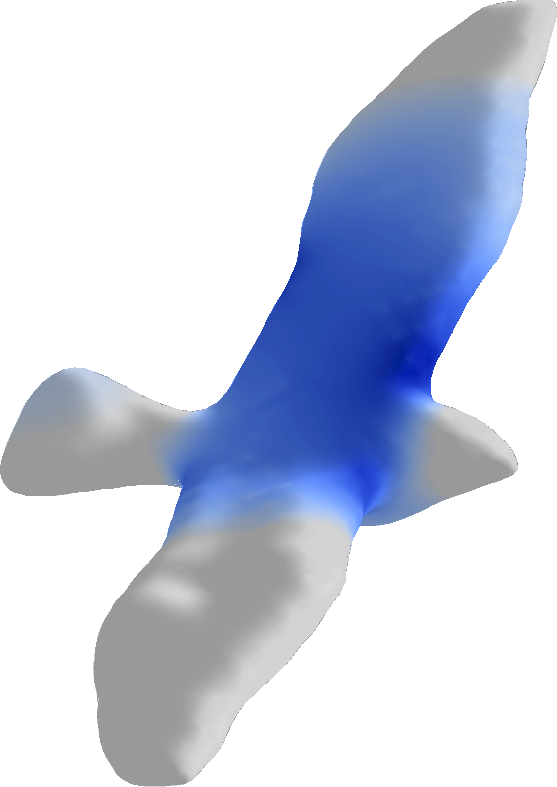} &
	\includegraphics[width=.095\textwidth]{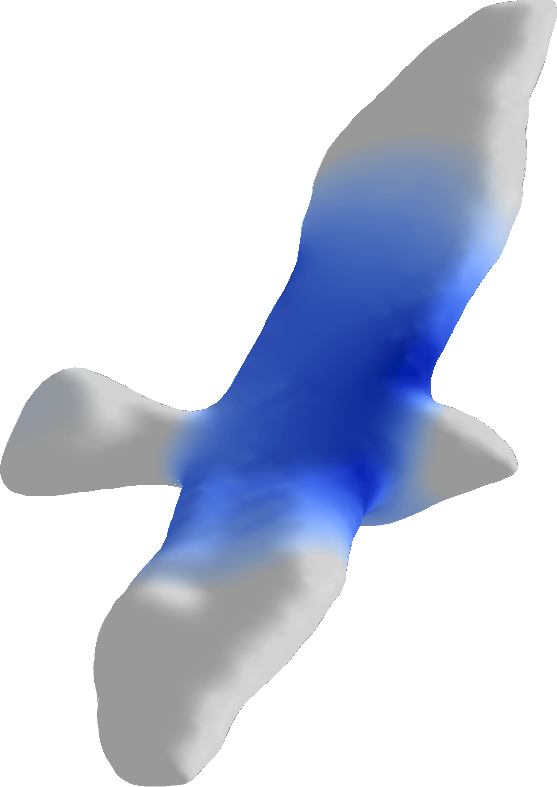} &
	\includegraphics[width=.095\textwidth]{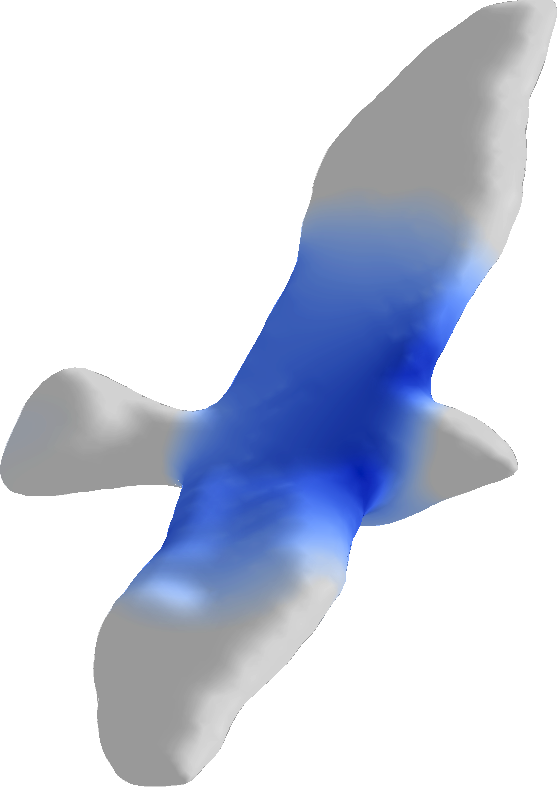} &
	\includegraphics[width=.095\textwidth]{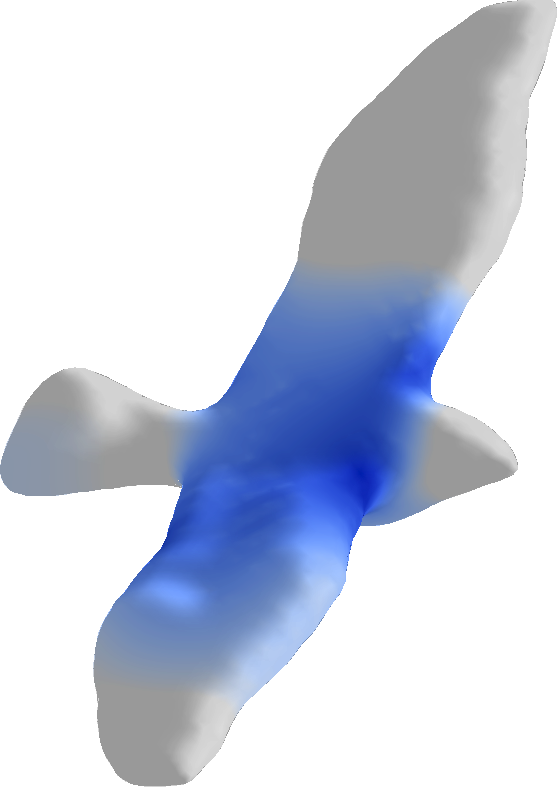} &
	\includegraphics[width=.095\textwidth]{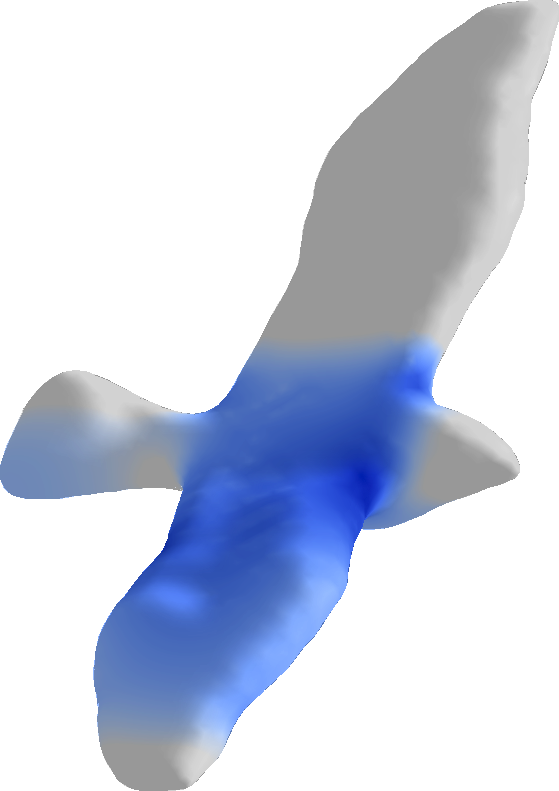} &
	\includegraphics[width=.095\textwidth]{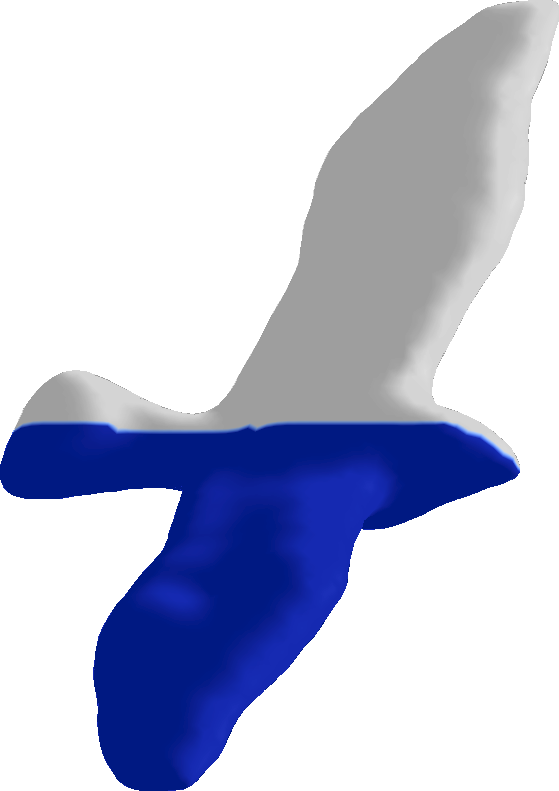}
\end{tabular}
\label{figure_teaser}
\caption{
\review{Given two probability distributions over a discrete surface (left and right), our algorithm generates an interpolation that takes the geometric structure of the surface into account.}
}
\end{teaserfigure}

\keywords{Optimal Transport, Wasserstein Distance, Discrete Differential Geometry}

\maketitle

\section{Introduction}

Probability distributions are key objects in geometry processing that can encode a variety of quantities, including uncertain feature locations on a surface, color histograms, 
and physical measurements like the density of a fluid.
A central problem related to distributions is that of \emph{interpolation}: Given two probability distributions over a fixed domain, how can one transition smoothly from the first to the second?

\emph{Optimal transport} gives one potential solution. This theory lifts the geometric structure of a surface to a Riemannian structure on the space of probability distributions over the surface, the latter being endowed with the so-called \emph{Wasserstein} metric; the set of distributions equipped with this metric is sometimes called Wasserstein space. To interpolate  between two probability distributions, one computes a \emph{geodesic} in  Wasserstein space between the two. This definition is sometimes referred to as McCann's displacement interpolation \shortcite{Mccann1997}, applied to graphics e.g.\ in \cite{bonneel2011displacement}.

Even though optimal transport theory is now well-understood \cite{Villani2003,Villani2008,Santambrogio2015}, the interpolation problem remains challenging numerically. Related problems, like the computation of Wasserstein distances or barycenters in Wasserstein space, can be tackled by fast and scalable algorithms like entropic regularization or semi-discrete methods, developed only a few years ago. Most of these methods, however, fail to reproduce the Riemannian structure of Wasserstein space and/or are prone to diffusion: The interpolation between two peaked probability distributions is more diffuse in the midpoint than optimal transport theory suggests. This drawback can inhibit application of transport in computer graphics practice, in which blurry interpolants are often undesirable.

As an alternative, we define a Riemannian structure on the space of probability distributions over a discrete surface, designed to mimic that of the Wasserstein distance between distributions over a smooth manifold.  Our construction is inspired by the Benamou--Brenier formula~\shortcite{Benamou2000}, previously discretized only on flat grids without structure preservation. This Riemannian structure automatically defines geodesics and distances between probability distributions. In particular, the geodesic problem can be recast as a convex problem and be tackled by iterative methods 
phrased using local operators familiar in geometry processing and finite elements (gradients, divergence and Laplacian on the surface).  Our method does not require precomputation of pairwise distances between points on the surface. 

Compared to other methods, our interpolation can be rephrased as a geodesic problem and \review{numerically exhibits} less diffusion when interpolating between peaked distributions.  In cases where the sharpness captured by our method and predicted by optimal transport theory is undesirable visually, we provide a quadratic regularizer that \emph{controllably} reduces congestion of the computed interpolant; unlike entropically-regularized transport, however, our optimization problem does not degenerate or become harder to solve when the regularization term vanishes. \review{Although the computation of interpolants remains quite slow for meshes with more than a few thousand vertices and improving the scalability of numerical routines used to optimize our convex objective remains a challenging task for future work, we demonstrate} application to tasks derived from transport, e.g.\ computation of harmonic mappings into Wasserstein space and integration of gradient flows.

In addition to our algorithmic contributions, we regard our work as a key theoretical step toward making optimal transport compatible with the language of discrete differential geometry (DDG). Our Riemannian metric induces a \emph{true} geodesic distance---with a triangle inequality---on the space of distributions over a triangulated surface expressed using one value per \review{vertex. 
Inspired} by an analogous construction on graphs~\cite{Maas2011}, we leverage a non-obvious observation that a strong contender for structure-preserving discrete transport on meshes actually involves a real-valued external time variable, rather than discretizing transport as a linear program as in most previous work. The resulting geodesic problem naturally preserves convexity and other key properties from the theoretical case while suggesting an effective computational technique.

\section{Related work}


\subsection{Linear programming and regularization}


Landmark work  by Kantorovich~\shortcite{Kantorovich1942} showed that optimal transport can be phrased as a linear programming problem. If both probability distributions have finite support, we end up with a finite-dimensional linear program solvable using standard convex programming techniques. A variety of solvers has been designed to tackle this linear program, which exploit the particular structure of the objective functional~\cite{edmonds1972theoretical,klein1967primal,orlin1997polynomial}. These methods, however, usually require as input the \emph{pairwise} distance matrix, a dense matrix that scales quadratically in the size of the support and is difficult to evaluate if the points are on a curved space.

A landmark paper by Cuturi~\shortcite{Cuturi2013} reinvigorated interest in numerical transport by proposing adding an \emph{entropic regularizer} to the problem, leading to the efficient \emph{Sinkhorn} (or \emph{matrix rebalancing}) algorithm. This algorithm, which involves iteratively rescaling the rows and columns of a kernel in the cost matrix, is highly parallelizable and well-suited to GPU architectures.
When the cost matrix involves squared geodesic distances along a discrete surface, Solomon et al.~\shortcite{Solomon2015} showed that Sinkhorn iterations can be written in terms of heat diffusion operators, eliminating the need to store the cost matrix explicitly.
While they are efficient, these entropically-regularized techniques suffer from diffusion, making them less relevant to problems in which measures are sharp or peaked.  They also do not define true distances on the space of distributions over mesh vertices.

When the transport cost is equal to geodesic distance, i.e.\ the $1$-Wasserstein distance, optimal transport is equivalent to the \emph{Beckmann problem} \cite[Chapter 4]{Santambrogio2015}, for which specific and efficient algorithms can be designed~\cite{Solomon2014,Li2018}. These methods cannot be applied to the quadratic Wasserstein distance, which is needed to make transport-based interpolation nontrivial, namely to recover McCann's displacement interpolation \shortcite{Mccann1997}. In particular, the optimal transport problem defining the $1$-Wasserstein distance does not come with a time dependency allowing to define a smooth interpolation and suffers from non-uniqueness coming from the lack of strict convexity.



\subsection{Semi-discrete optimal transport}

When one of the \review{distributions} has a density w.r.t.\ Lebesgue while the other one is discrete, the transport problem can be reduced to a finite-dimensional convex problem whose number of unknowns
scales with the cardinality of the support of the discrete \review{distribution}. Leveraging tools from computational geometry, this \emph{semi-discrete} problem can be solved efficiently up to fairly large scale when the cost is Euclidean \review{\cite{Aurenhammer1998,Merigot2011,de2012blue,Levy2015, Kitagawa2016}}.

Semi-discrete transport has been used to tackle problems for which the precise structure of the optimal transportation map is relevant, as in fluid dynamics \cite{DeGoes2015fluid, Merigot2016, Gallouet2017}.  It also has been used for approximating barycenters in the stochastic case \cite{Claici2018} and as a measure of proximity for shape reconstruction \cite{DeGoes2011,Digne2014}. \review{Extensions of semi-discrete transport to curved spaces can be found in \cite{DeGoes2014, Merigot2018}}. Although they can be fast and give explicit transport maps, these methods are not suited for the application we have in mind: They rely on the computation of transport maps between two probability distributions that are not of the same nature (one is discrete, the other has a density) and hence cannot be used to implement a distance or interpolation \review{cleanly}.

\subsection{Fluid dynamic formulations}

By switching from Lagrangian to Eulerian descriptions of transport, Benamou and Brenier~\shortcite{Benamou2000} proved that optimal transport could be rephrased using fluid dynamics: Instead of computing a coupling, they show that transport with quadratic costs is equivalent to \review{finding} a \emph{time-varying} sequence of distributions smoothly interpolating between the two measures. The problem that they obtain is convex and solved via the Alternating Direction Method of Multipliers (ADMM)~\cite{Boyd2011}. Proof of the convergence of ADMM in the infinite-dimensional setting (i.e.\ when neither time nor the geometric domain is discretized) is provided in \cite{Guittet2003,Hug2015}. Papadakis et al.~\shortcite{Papadakis2014} reread the ADMM iterations as a proximal splitting scheme and show how one can build different algorithms to solve the convex problem. This fluid dynamic formulation also appears in mean field games \cite{Benamou2015}.

In all of the above work, however, the authors work in a flat space and use finite difference discretizations of the densities and velocity fields. Hence their work does not contain a clear indication about how to handle the problem on a discrete curved space, and theoretical properties of their models \emph{after} discretization remain unverified. 

\review{The algorithm for approximating 1-Wasserstein distances presented by Solomon et al.~\shortcite{Solomon2014} achieves some of the objectives mentioned above.  Their vector field formulation is in some sense dynamical, and their distance satisfies properties like the triangle inequality after discretization.  As mentioned above, however, their optimization problem lacks strict convexity and is not suitable for interpolation.}

\subsection{Dynamical transport on graphs and meshes}

Maas~\shortcite{Maas2011} defines a Wasserstein distance between probability distributions over the vertices of a graph. The (finite-dimensional) space of distributions in this case inherits a Riemannian metric with some structure preserved from the infinite-dimensional definition; for instance, the gradient flow of entropy corresponds to a notion of heat flow along the graph. A similar structure is proposed by Chow et al.~\shortcite{Chow2016}, but they recover a different heat flow. Erbar et al.~\shortcite{Erbar2017} propose a numerical algorithm for approximating the discrete Wasserstein distance introduced by Maas, but the \review{distributions} they produce have a tendency to diffuse along the graph. This flaw is not related to their numerical method but rather comes from the very definition of their optimal transport distance. It is also not obvious \review{what} is the best way to adapt their construction to discrete surfaces rather than graphs.




\subsection{Interpolation and geodesics}

Optimal transport is not the only way to interpolate between probability distributions; for instance, Azencot et al.~\shortcite{Azencot2016} use a time-independent velocity field to advect functions and match them. Their method, however, cannot be understood as a geodesic curve in the space of distributions. In another direction, Heeren et al.~\shortcite{Heeren2012} have provided an efficient way to discretize in time geodesics in a high-dimensional space of thin shells. Their formulation is not well-suited for optimal transport where direct discretization of the Benamou--Brenier formula is possible.  Finally, methods like~\cite{panozzo2013weighted} provide a means of averaging points on discrete surfaces, although it is not clear how to extend them to the more general distribution case.









\section{Optimal transport on a discrete surface}
\label{section_discrete_OT}

\begin{figure} 

\begin{tabular}{ccccc}
     \includegraphics[width=.07\textwidth]{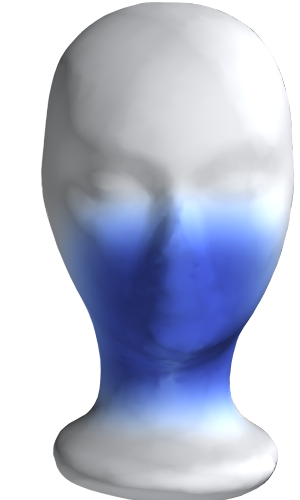} &
     \includegraphics[width=.07\textwidth]{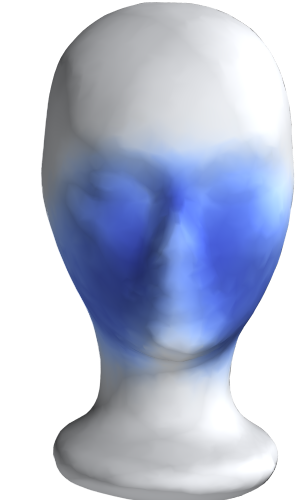} &
     \includegraphics[width=.07\textwidth]{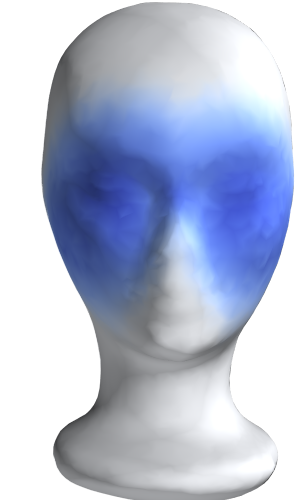} &
     \includegraphics[width=.07\textwidth]{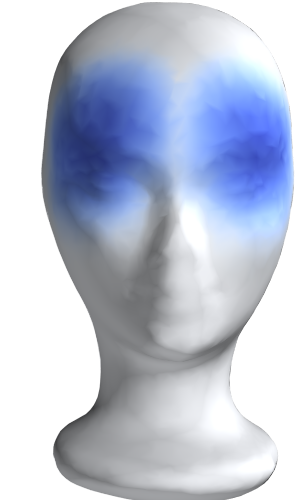} &
     \includegraphics[width=.07\textwidth]{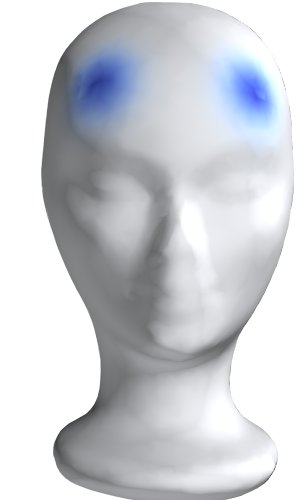} \\
      
     \includegraphics[width=.07\textwidth]{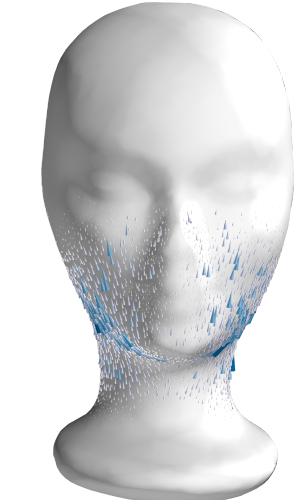} &
     \includegraphics[width=.07\textwidth]{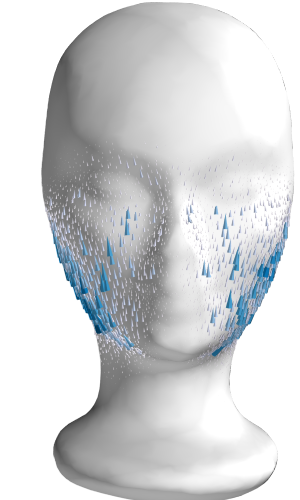} &
     \includegraphics[width=.07\textwidth]{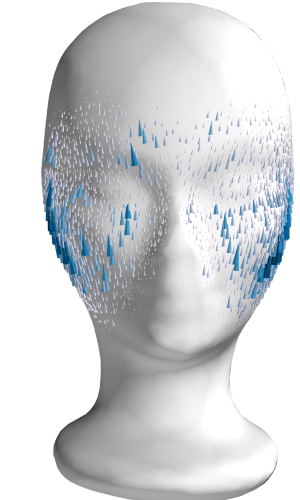} &
     \includegraphics[width=.07\textwidth]{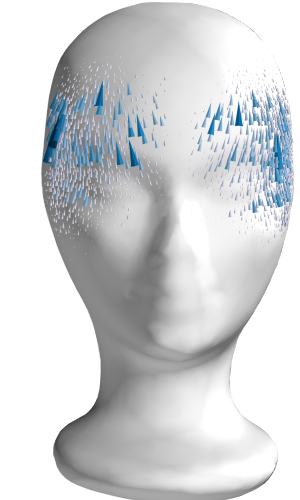} &
     \includegraphics[width=.07\textwidth]{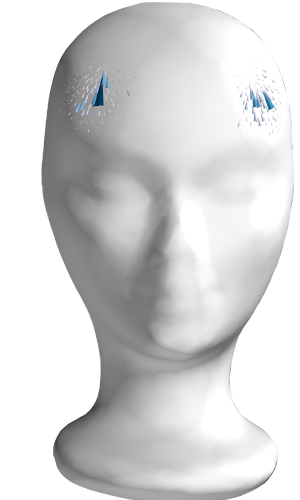} \\
     
     \hline 
      
     $t=0$ & $t=\nicefrac{1}{4}$ & $t = \nicefrac{1}{2}$ & $t=\nicefrac{3}{4}$ & $t=1$ 
      
\end{tabular}

\caption{Top row: Interpolation of probability distributions. The left and right distributions are data and the middle ones are the output of our algorithm. Bottom row: Display of the momentum $\mbf = \mu \vbf$, where $\vbf$ is a time-dependent velocity-field advecting the left distrbution on the right one. We have used the regularization described in Subsection \ref{subsection_regularization} with $\alpha = 0.1$.}
\label{figure_momentum}
\end{figure}

\subsection{Optimal transport on manifolds}

\begin{figure}
\begin{tabular}{cccc}

\multicolumn{4}{c}{
\begin{tikzpicture}[scale = 2]

\draw [fill = blue!30, line width = 1pt] (0,0.5) circle (0.3) ;

\draw (-0.5,0.5) node{$\bar{\mu}^0$} ;

\draw [fill = blue!10, line width = 1pt] (2,0.2) circle (0.15) ;
\draw [fill = blue!10, line width = 1pt] (2,0.8) circle (0.15) ;

\draw (2.4,0.5) node{$\bar{\mu}^1$} ;

\draw [fill = black] (0.1,0.6) circle (0.02) ;
\draw (0.15,0.65) node[above right]{$x$} ;
\draw [fill = black] (2.05,0.85) circle (0.02) ;
\draw (2.15,0.85) node[right]{$y$} ;

\draw [fill = black] (0,0.3) circle (0.02) ;
\draw (-0.05,0.25) node[below left]{$x'$} ;
\draw [fill = black] (1.95,0.1) circle (0.02) ;
\draw (1.9,-0.05) node[right]{$y'$} ;
 
\draw [->, >=latex, dashed] (0.1,0.6) -- (2.05,0.85) ; 
\draw [->, >=latex, dashed] (0,0.3) -- (1.95,0.1) ; 

\draw (1,.9) node{$\dint \pi(x,y)$} ; 

\end{tikzpicture}} \\

\hline

& & & \\

\includegraphics[width=.1\textwidth]{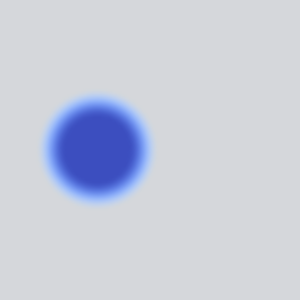} &
\includegraphics[width=.1\textwidth]{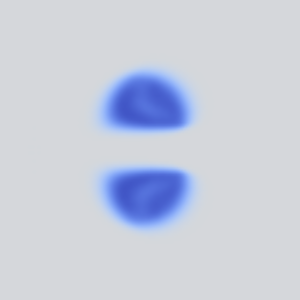} &
\includegraphics[width=.1\textwidth]{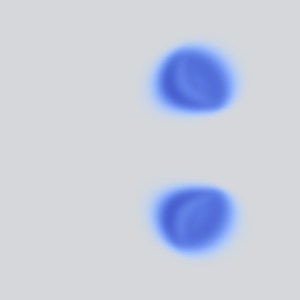} &
\includegraphics[width=.1\textwidth]{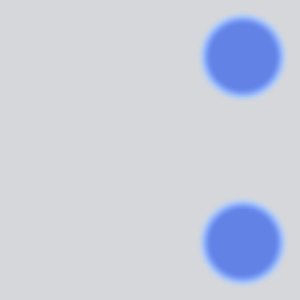} \\

$t= 0$ & $t = \nicefrac{1}{3}$ & $t = \nicefrac{2}{3}$ & $t=1$

\end{tabular}

\caption{\review{Top row: schematic view of the static formulation of optimal transport \eqref{equation_definition_W_coupling}. The initial distribution $\bar{\mu}^0$ is on the left, and the final distribution $\bar{\mu}^1$ is on the right. The quantity $\dint \pi(x,y)$ represents the amount of mass that is transported from $x$ to $y$. The coupling $\pi$ is chosen in such a way that the total cost is minimal. Bottom row: dynamical formulation between the same distributions (computed with the algorithm in Section \ref{section_algorithm}). To go from the top to the bottom row, once one has the optimal $\pi$, a proportion $\dint \pi(x,y)$ of particles follows the geodesic (in this case a straight line) between $x$ and $y$ with constant speed. The macroscopic result of all these motions is a time-varying sequence of distributions, displayed in blue.}}
\label{figure_static_dynamic}
\end{figure}

We begin by introducing briefly optimal transport theory on a smooth space. Let $\M$ be a connected and compact Riemannian manifold with metric $\langle \ , \ \rangle$ and induced norm $\| \ \|$; define $d : \M \times \M \to \R_+$ to be geodesic distance.

Denote by $\P(\M)$ the space of probability measures on $\M$. This space is endowed with the quadratic Wasserstein distance from optimal transport: If $\bar{\mu}^0, \bar{\mu}^1 \in \P(\M)$, then the distance $W_2(\bar{\mu}^0, \bar{\mu}^1)$ between them is defined as
\begin{equation}
\label{equation_definition_W_coupling}
W_2^2(\bar{\mu}^0, \bar{\mu}^1) \eqdef \min_{\pi} \iint_{\M \times \M} \frac{1}{2} d(x,y)^2 \dint \pi(x,y),
\end{equation}
where the minimum is taken over all probability measures $\pi$ on the product space $\M \times \M$ whose first (resp.\ second) marginal is $\bar{\mu}^0$ (resp.\ $\bar{\mu}^1$). 

\review{The problem~\eqref{equation_definition_W_coupling} can be interpreted as follows: $\dint \pi(x,y)$ denotes the quantity of particles located at $x$ that are sent to $y$, and the cost for such a displacement is $d(x,y)^2$. The constraint on the marginals enforces that $\pi$ describes a way of moving the distribution of mass $\bar{\mu}^0$ onto $\bar{\mu}^1$. Thus, the variational problem \eqref{equation_definition_W_coupling} reads: Find the cheapest way $\pi$ to send $\bar{\mu}^0$ onto $\bar{\mu}^1$, and the result (i.e.\ the minimal cost) is defined as the squared Wasserstein distance between $\bar{\mu}^0$ and $\bar{\mu}^1$. In some  generic cases \cite{Brenier1991, Gangbo1996}, the optimal $\pi$ is located on the graph of a map $T : \M \to \M$, which means that a particle $x \in \M$ is sent onto a unique location $y=T(x) \in \M$. 

The space $(\P(\M), W_2)$ is a complete metric space \cite{Santambrogio2015,Villani2003}, and---at least formally---it has the structure of an (infinite-dimensional) Riemannian manifold. Revealing this manifold structure requires some manipulation and rephrasing of the original problem~\eqref{equation_definition_W_coupling}, detailed below.

As first noticed by Benamou and Brenier~\shortcite{Benamou2000}, the Wasserstein distance between $\bar{\mu}^0$ and $\bar{\mu}^1$ can be obtained by solving an alternative, physically-motivated optimization problem:}
\begin{equation}
\label{equation_BB_continuous_primal}
W_2^2(\bar{\mu}^0, \bar{\mu}^1) = \begin{cases}
\min_{\mu,\vbf} & \int_0^1 \int_\M \frac{1}{2} \|\vbf^t\|^2 \dint \mu^t \dint t \\
\text{s.t. } &\mu^0 = \bar{\mu}^0, \ \mu^1 = \bar{\mu}^1, \\
&\dr_t \mu + \nabla \cdot( \mu \vbf ) = 0.
\end{cases}
\end{equation}
\review{As we will have to deal with functions and vectors depending both on time and space, here and moving forward we adopt the following convention: Upper indices denote time, and lower indices denote space. Moreover, $t \in [0,1]$ will denote an instant in time, and $f$ will later denote a generic triangle ($f$ for \emph{face}) in a triangulation.} 
In~\eqref{equation_BB_continuous_primal}, the minimum is taken over all curves $\mu : [0,1] \to \P(\M)$ and all time-dependent velocity fields $\vbf : [0,1] \times \M \to T\M$ such that the continuity equation $\dr_t \mu + \nabla \cdot( \mu \vbf ) = 0$ is satisfied in the sense of distributions. The optimal curve $\mu$ is known as McCann's displacement interpolation~\shortcite{Mccann1997}.


\review{The physical interpretation of this problem is as follows. Imagine probability distributions as distributions of mass, e.g.\ the density of a fluid. The curve $\mu$ represents an assembly of particles in motion, distributed as $\bar{\mu}^0$ at $t=0$ and $\bar{\mu}^1$ at $t=1$. At time $t$, a particle located at $x \in \M$ moves with velocity $\vbf^t_x$. The continuity equation $\dr_t \mu + \nabla \cdot( \mu \vbf ) = 0$ simply expresses the conservation of mass. For a given time $t$, the cost $\int_\M \frac{1}{2} \|\vbf^t\|^2 \dint \mu^t$ is the total kinetic energy of all the particles. Hence, the cost minimized in \eqref{equation_BB_continuous_primal}, i.e.\ the integral w.r.t.\ time of the kinetic energy, is the \emph{action} of the curve. As there is no congestion cost---that is, the particles do not interact with each other---\eqref{equation_BB_continuous_primal} is the least-action principle for a pressureless gas. 

Formulation \eqref{equation_definition_W_coupling} is static, since it directly determines the target for each particle at $t=1$ given the arrangement at $t=0$.  
On the other hand, \eqref{equation_BB_continuous_primal} is dynamical, recovering a curve in $\P(\M)$ interpolating smoothly between $\bar{\mu}^0$ and $\bar{\mu}^1$. To convert from the static to the dynamical formulation, one takes an optimal transport plan $\pi$ from \eqref{equation_definition_W_coupling} and an assembly of particles distributed according to $\bar{\mu}^0$. If a particle located at $x \in \M$ at time $t=0$ and is supposed, according to $\pi$, to be sent to $y \in \M$, then this particle follows a constant-speed geodesic along $\M$ from $x$ to $y$. The optimal curve $\mu$ in \eqref{equation_BB_continuous_primal} is exactly the resulting macroscopic motion of all the particles, illustrated in Figure \ref{figure_static_dynamic}.}   

Calling $\mbf = \mu \vbf$ the momentum and using the change of variables $(\mu,\vbf) \leftrightarrow (\mu, \mbf)$, \review{problem} \eqref{equation_BB_continuous_primal} becomes convex, \review{because the mapping $(\mu, \vbf) \to \nicefrac{1}{2} \| \vbf \|^2 \mu$ is not jointly convex while $(\mu, \mbf) \to \nicefrac{1}{2} \| \mbf \|^2/ \mu$ is}. Its dual reads
\begin{equation}
\label{equation_BB_continuous_dual}
W_2^2(\bar{\mu}^0, \bar{\mu}^1) = \begin{cases}
\max_{\varphi} & \int_\M \varphi^1 \dint \bar{\mu}^1 - \int_\M \varphi^0 \dint \bar{\mu}^0\\
\text{s.t. } & \dr_t \varphi + \frac{1}{2} \| \nabla \varphi \|^2 \leqslant 0 \text{ on } [0,1] \times \M,
\end{cases}
\end{equation}
where the maximization is performed over real-valued functions $\varphi : [0,1] \times \M \to \R$ \review{\cite{Villani2008,Santambrogio2015}}. 
\review{The} relation $\vbf = \nabla \varphi$ holds whenever $\vbf$ (resp.\ $\varphi$) is a minimizer (resp.\ maximizer) of the primal (resp.\ dual) problem. In particular, in \eqref{equation_BB_continuous_primal}, we can restrict ourselves to the set of $\vbf$ such that $\vbf^t = \nabla \varphi^t$ for every $t \in [0,1]$.

Equation~\eqref{equation_BB_continuous_primal}
defines a formal Riemannian structure on $\P(\M)$ \cite{Otto2001}. Given $\mu \in \P(\M)$ with a density bounded from below by a strictly positive constant, the tangent space $T_\mu \P(\M)$ is identified as the set of functions $\delta \mu : \M \to \R$ with $0$-mean: $\delta \mu$ is the partial derivative w.r.t.\ time of a curve whose value at time $0$ is $\mu$. If $\delta \mu \in T_\mu \P(\M)$, we can compute $\varphi : \M \to \R$ the solution (unique up to translation by constants) of the elliptic equation
\begin{equation}
\label{equation_elliptic}
\nabla \cdot (  \mu \nabla \varphi ) = - \delta \mu.
\end{equation}
Then, the norm of $\delta \mu$ is defined as
\begin{equation}
\label{equation_norm_continuous}
\| \delta \mu \|^2_{T_\mu \P(\M)} \eqdef \review{\frac{1}{2}} \int_\M \| \nabla \varphi \|^2 \dint \mu.
\end{equation}
Endowed with this scalar product obtained from the polarization identity $\langle x,y \rangle = \frac{1}{4} (\| x+ y \|^2 - \| x-y \|^2)$, 
one can check, \review{and the derivation appears in the supplemental material}, that the Wasserstein distance $W_2$ can be interpreted as the geodesic distance induced by \eqref{equation_elliptic} and \eqref{equation_norm_continuous}.  This is precisely the content of the Benamou--Brenier formula \eqref{equation_BB_continuous_primal}.

One needs to assume $\mu \geqslant c > 0$ on $\M$ for the elliptic equation \eqref{equation_elliptic} to be well-posed. Nevertheless, one can still give a meaning to this Riemannian structure using tools from analysis in metric spaces \cite{Ambrosio2008}.

\subsection{Discrete surfaces}

The previous subsection contains only well-understood results.  Let us now start our contribution: to mimic these definitions and properties when the manifold is replaced by a triangulated surface.

Instead of a smooth manifold $\M$, \review{we consider the case where we only have access to a triangulated surface $S = (V,E,T)$}, which consists of a set $V\subset \R^3$ of vertices,
a set $E\subseteq V\times V$ of edges linking vertices, and a set $T\subseteq V\times V\times V$ of triangles containing exactly $3$ vertices linked by $3$ edges. \review{For a given face $f \in T$, we denote by $V_f \subset V$ the set of vertices $v$ such that $v \in f$; for a given vertex $v \in V$, we denote by $T_v \subset T$ the set of faces $f$ such that $v \in f$.} The area of a triangle $f \in T$ is denoted by $|f|$. Each vertex $v$ is associated to a barycentric dual cell (see Figure \ref{figure_meshes}) whose area, equal to $\frac{1}{3} \sum_{f \in T_v} |f|$, is denoted by $|v|$.

Following standard constructions from \review{first-order finite elements} (FEM), 
a scalar function on $\M$ will be seen as having one value per vertex, i.e.\ belonging to $\R^{|V|}$. A \review{distribution} $\mu \in \M$ will be also discretized by one value per vertex representing the density w.r.t.\ the volume measure. In other words, the volume of the dual cell centered \review{at} $v \in V$, measured with $\mu$, is $|v| \mu_v$. We denote by $\P(S)$ the set of probability distributions on the discrete surface:
\begin{equation}
\P(S) \eqdef \left\{ \mu \in \R^{|V|} \text{ s.t. } \mu_v \geqslant 0 \text{ for all } v \in V \text{ and } \sum_{v \in V} |v| \mu_v = 1 \right\}.
\end{equation}
For instance, the volume measure is represented by the vector in $\P(S)$ parallel to \review{$(1,1, \ldots, 1)^\top$.}  

The set $V$ of vertices can be interpreted as a discrete metric space, either by using directly the Eulidean distance on $\R^3$ or by some version of the discrete geodesic distance along $S$. Hence, a natural attempt to discretize the 2-Wasserstein distance would be to use \eqref{equation_definition_W_coupling} and replace $d$ by the distance between vertices. As pointed out in \cite{Maas2011,Gigli2013}, however, this discretization leads to a space without a smooth structure. For instance, there do not exist non-constant smooth (e.g., Lipschitz) curves valued in such a space; \review{whereas in a space with a smooth structure (e.g. a Riemannian manifold), one expects the existence of non-constant Lipschitz curves, namely the (constant-speed) geodesics.}

\review{Let us briefly recall the argument. We take the simplest example of a space consisting of two points. If  $X = \{ x_0, x_1 \}$ contains two points separated by a given distance $\ell$, a probability distribution $\mu$ on $X$ is characterized by a single number 
$\mu_{x_0} \in [0,1]$, as $\mu_{x_1} = 1 - \mu_{x_0}$. If $\mu^t$ is a curve valued in $\P(X)$, one can compute $W_2(\mu^t, \mu^s) = \ell \sqrt{|\mu^t_{x_0} - \mu^s_{x_0}|}$. In particular, if $\mu$ is Lipschitz with Lipschitz constant $L$, our expression for $W_2$ implies $|\mu^t_{x_0} - \mu^s_{x_0}| \leqslant \frac{L^2}{\ell^2} |t-s|^2$. There is an exponent $2$ on the r.h.s., but only $1$ on the l.h.s.: it is precisely this discrepancy which is an issue. Indeed, dividing by $|t-s|$ on both side and letting $s \to t$, one sees that $t \mapsto \mu^t_{x_0}$ is differentiable everywhere with derivative $0$, i.e.\ is constant.}


For this reason, we prefer to discretize the Benamou--Brenier formulation \eqref{equation_BB_continuous_primal}, as it will automatically give a Riemannian structure on the space $\P(S)$. In this sense, the basic inspiration for our technique is the same as that of Maas~\shortcite{Maas2011}, although on a triangulated surface we enjoy the added structure afforded by an embedded manifold approximation of the domain rather than an abstract graph.

As \eqref{equation_BB_continuous_primal} involves velocity fields, a choice has to be made about their representation \cite{DeGoes2015vector}. To take full advantage of the triangulation, we want to use triangles and not only edges to define our objective functional. The latter choice leads to formulas similar to \cite{Maas2011,Chow2016}, which, as we say above, exhibit strongly diffuse geodesics. 
We prefer to represent vector fields on triangles. More precisely, a (piecewise-constant) velocity field $\vbf$ is represented as an element of $(\R^3)^{|T|}$, i.e.\ as one vector per triangle, with the constraint that $\vbf_f$, which is a vector of $\R^3$, is parallel to the plane spanned by $f$, which means that our velocity fields lie in a subspace of dimension \review{$2 |T|$.}

If $\varphi \in \R^{|V|}$ represents a real-valued function, we compute its gradient \review{along the mesh} using the first-order (piecewise-linear) finite element method \cite{brenner2007mathematical}:
For each triangle $f$, we compute $\hat{\varphi}$, the unique affine function defined on $f$ coinciding  with $\varphi$ on the vertices of $f$. Then, the gradient of $\varphi$ in $f$ is simply defined as the gradient of $\hat{\varphi}$ at any point of $f$; as the gradient is constant on each triangle, we need to store only one vector per triangle. 
Since this operator is linear, let us denote by $G \in \R^{ 3|T| \times |V| }$ its matrix representation. 
In particular, the Dirichlet energy of $\varphi \in \R^{|V|}$ is defined as
\begin{equation}
\mathrm{Dir}(\varphi) \eqdef \frac{1}{2} \sum_{f \in T} |f| \| (G \varphi)_f \|^2.
\end{equation}
The sum is weighted by the areas of the triangles to discretize a surface integral. The first variation of this Dirichlet energy can be expressed in matrix form as $(G^\top M_T G) \varphi$, where $M_T \in \R^{3|T| \times 3|T|}$ is a diagonal weight matrix whose elements are the areas of the triangles. The matrix $G^\top M_T G$ is the so-called cotangent Laplace matrix of a triangulated surface \cite{Pinkall1993}.

\subsection{Dual problem on meshes}

Let us introduce our discrete Benamou--Brenier formula by starting from its dual formulation \eqref{equation_BB_continuous_dual}. Since the objective functional is linear, its discrete counterpart is straightforward as both $\mu$ and $\varphi$ are defined on vertices. On the other hand, in the constraint $\dr_t \varphi + \frac{1}{2} \| \nabla \varphi \|^2 \leqslant 0$, we would like to replace $\nabla \varphi$ by $G \varphi$ but then the two terms of the sum do not live on the same space.

The constraint $\dr_t \varphi + \frac{1}{2} \| \nabla \varphi \|^2 \leqslant 0$ is \review{a priori} not coercive.
%
%
Suppose $\varphi$ satisfies the constraint, and take another function $\psi$ with the property that $\varphi+s\psi$ satisfies the constraint for arbitrarily large $s \geqslant 0$.  Expanding the inequality $\dr_t (\varphi + s \psi) + \frac{1}{2} \| \nabla \varphi + s \nabla\psi \|^2 \leqslant 0$ and taking the limit $s \to + \infty$  shows that $\psi$ satisfies this property if and only if $\|\nabla \psi\|=0$ and $\partial_t \psi\leqslant0$; these two conditions together imply that the objective functional in~\eqref{equation_BB_continuous_dual} is smaller when evaluated at $\varphi+s\psi$ rather than at $\varphi$.
%
%
%
%
This is a property that we would like to keep at the discrete level. To do so, we enforce a discrete analogue of the constraint at each vertex of the mesh. To go from $\| G \varphi \|^2$, which is defined on triangles, to something defined on vertices, we \emph{first} take the squared norm and \emph{subsequently} average in space:\footnote{If we do the opposite (averaging and then taking the square), there are spurious modes in the kernel of the quadratic part of the constraint, which leads to poor results when working with non-smooth densities.}

\begin{definition}
Let $\bar{\mu}_0, \bar{\mu}_1 \in \P(S)$. The discrete (quadratic) Wasserstein distance $W_d(\bar{\mu}_0, \bar{\mu}_1)$ is defined as the solution of the following convex problem:
\begin{equation}
\label{equation_BB_discrete_dual}
W_d^2(\bar{\mu}_0, \bar{\mu}_1) = \begin{cases}
\sup_{\varphi} &  \sum_{v \in V} |v| \varphi^1_v \bar{\mu}^1_v - \sum_{v \in V} |v| \varphi^0_v \bar{\mu}^0_v  \\
\text{s.t. } & \dr_t \varphi^t_v + \displaystyle{ \frac{1}{2} \frac{\sum_{f \in T_v} |f| \| (G \varphi)^t_f \|^2 }{  3 |v|}}   \leqslant 0 \\ & \text{ for all } (t,v) \in [0,1] \times V,
\end{cases}
\end{equation}
where the unknown is a function $\varphi : [0,1] \times V \to \R$.
\end{definition}

\noindent The denominator $3|v|$ is nothing else, by definition, than $\sum_{f \in T_v} |f|$. In particular, the value $\left( \sum_{f \in T_v } |f| \| (G \varphi)^t_f \|^2 \right)(3 |v|)^{-1}$ is the average, weighted by the areas of the triangles, of $\| (G \varphi)^t_f \|^2$ for \review{$f \in T_v$}. One can check that the same reasoning as above can be performed. \review{Indeed, if $\varphi  : [0,1] \times V \to \R$ satisfies the constraint in \eqref{equation_BB_discrete_dual} and $\varphi + s \psi$ also satisfies it for arbitrarily large $s \geqslant 0$, it implies, taking $s \to + \infty$, that
\begin{equation}
\displaystyle{ \frac{1}{2} \frac{\sum_{f \ \text{s.t.} \  v \in f  } |f| \| (G \psi)^t_f \|^2 }{  3 |v|}} \leqslant 0.
\end{equation}
This inequality must hold for all $(t,v) \in [0,1] \times V$.  Thus, we conclude (and it is for this implication that it is important to average after taking squares) that $G \psi$ is identically $0$. In other words, for all $t \in [0,1]$, the function $\psi^t$ is constant over the discrete surface. Plugging this information back into the constraint in \eqref{equation_BB_discrete_dual} and taking again $s \to + \infty$, we see that $\dr_t \psi \leqslant 0$. Hence, the value $\psi^0$ (which is constant over the surface) is larger than $\psi^1$. With this information ($G \psi = 0$ and $\dr_t \psi \leqslant 0$), 
the value of the objective functional must be smaller for $\varphi + s \psi$ than for $\varphi$ as soon as $s \geqslant 0$. }


\subsection{Riemannian structure of the space of probabilities on a discrete surface}
\label{subsection_Riemannian_discrete}

To recover an equation which looks like the primal formulation of the Benamou--Brenier formula \eqref{equation_BB_continuous_primal}, it is enough to write the dual of the discrete formulation \eqref{equation_BB_discrete_dual}. The latter formulation, as explained above, was important to justify the \emph{choice} of the way we average quantities that do not live on the same grid.

We introduce additional notation to deal with the averaging of the density $\mu$. If $\mu \in \P(S)$, we denote by $\hat{\mu} \in \R^{|T|}$ the vector given by, for any $f \in T$,
\begin{equation}
\label{equation_definition_averaging_mu}
\hat{\mu}_f = \frac{1}{3} \sum_{v \in V_f} \mu_v.
\end{equation}
This is a natural way to average $\mu$ from vertices to
triangles, \review{appearing in the dual formulation given below:} 

\begin{proposition}
\label{proposition_BB_discrete_primal}
The following identity holds:
\begin{equation}
\label{equation_BB_discrete_primal}
W_d^2(\bar{\mu}^0, \bar{\mu}^1) = \begin{cases}
\min_{\mu,\vbf} & \int_0^1 \left( \sum_{f \in T} \frac{1}{2} \| \vbf^t_f \|^2 |f| \hat{\mu}^t_f \right) \dint t \\
\text{s.t. } &\mu^0 = \bar{\mu}^0, \ \mu^1 = \bar{\mu}^1 \\
&\dr_t (M_V \mu^s_v) +  (-G^\top M_T [\hat{\mu}^t \vbf^t])_v = 0 \\
& \text{for all } (t,v) \in [0,1] \times V.
\end{cases}
\end{equation}
\end{proposition}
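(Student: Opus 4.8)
The plan is to obtain \eqref{equation_BB_discrete_primal} as the Lagrangian dual of the concave maximization \eqref{equation_BB_discrete_dual}, reading off the boundary conditions, the continuity equation, and the kinetic-energy objective directly from the optimality conditions. I would introduce, for the family of pointwise constraints indexed by $(t,v) \in [0,1]\times V$, a nonnegative multiplier written as $|v|\mu^t_v$ with $\mu^t_v \geqslant 0$; the factor $|v|$ is chosen precisely so that $\mu$ emerges as a density with respect to the dual-cell areas, as in the definition of $\P(S)$. The associated Lagrangian
\[
\begin{aligned}
L(\varphi,\mu) = {} & \sum_v |v|\,\varphi^1_v\bar\mu^1_v - \sum_v |v|\,\varphi^0_v\bar\mu^0_v \\
& - \int_0^1 \sum_v |v|\,\mu^t_v\left( \dr_t\varphi^t_v + \frac{1}{2}\frac{\sum_{f\in T_v}|f|\,\|(G\varphi)^t_f\|^2}{3|v|}\right) \dint t
\end{aligned}
\]
is concave in $\varphi$ whenever $\mu \geqslant 0$ (the quadratic term carries a nonpositive sign) and affine in $\mu$, so that, after justifying the exchange of the two optimizations, $W_d^2 = \sup_\varphi\inf_{\mu\geqslant0} L = \inf_{\mu\geqslant0}\sup_\varphi L$.

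Next I would carry out the inner supremum over $\varphi$ explicitly. Integrating $-\int_0^1\sum_v|v|\mu^t_v\,\dr_t\varphi^t_v\,\dint t$ by parts in time transfers the time derivative onto $\mu$ and produces boundary contributions at $t=0,1$; collecting these with the two sums from the objective, the supremum over the free boundary values $\varphi^0,\varphi^1$ is finite only if $\mu^0=\bar\mu^0$ and $\mu^1=\bar\mu^1$, which recovers the endpoint constraints. For the interior, I would swap the order of summation in the quadratic term and use $\sum_{v\in V_f}\mu^t_v = 3\hat\mu^t_f$ from \eqref{equation_definition_averaging_mu}; the factor $3|v|$ in the denominator cancels and the term collapses to $\tfrac12\sum_f |f|\,\hat\mu^t_f\,\|(G\varphi)^t_f\|^2$. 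For each fixed $t$ the remaining problem is the concave-quadratic maximization $\sup_{\varphi^t}\big[\,\sum_v |v|(\dr_t\mu^t_v)\varphi^t_v - \tfrac12\sum_f|f|\hat\mu^t_f\|(G\varphi)^t_f\|^2\,\big]$. Its stationarity condition reads $\dr_t(M_V\mu^t) = G^\top M_T[\hat\mu^t (G\varphi^t)]$, which is exactly the continuity equation in \eqref{equation_BB_discrete_primal} under the identification $\vbf = G\varphi$ (the discrete counterpart of $\vbf=\nabla\varphi$); and by the Legendre identity $\sup_\varphi[\langle b,\varphi\rangle - \tfrac12\langle A\varphi,\varphi\rangle] = \tfrac12\langle A\varphi^\star,\varphi^\star\rangle$ with $A\varphi^\star=b$, its optimal value equals the kinetic energy $\tfrac12\sum_f|f|\hat\mu^t_f\|\vbf^t_f\|^2$. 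Integrating over $t$ produces precisely the objective and constraints of \eqref{equation_BB_discrete_primal}.

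The delicate point is the rigorous justification of strong duality, that is, the exchange of $\sup_\varphi$ and $\inf_{\mu\geqslant0}$ in the presence of the external variable $t\in[0,1]$ and of the degeneracy of the quadratic form where the density vanishes. The constraint in \eqref{equation_BB_discrete_dual} is not coercive --- the same phenomenon examined after its statement for the kernel directions $\psi$ with $G\psi=0$ and $\dr_t\psi\leqslant0$ --- so I must first quotient out these directions (and fix the global additive constant) before invoking a Sion-type minimax theorem or Fenchel--Rockafellar duality. The second subtlety is that the per-time maximization has finite value only when $\dr_t(M_V\mu^t)$ lies in the range of $G^\top M_T\,\mathrm{diag}(\hat\mu^t)\,G$: on faces where $\hat\mu^t_f=0$ the momentum $\mbf^t_f=\hat\mu^t_f\vbf^t_f$ is forced to vanish, so the clean way to make the Legendre computation rigorous is to pass to the momentum variables $(\mu,\mbf)$, in which the integrand $\|\mbf\|^2/(2\mu)$ becomes the lower-semicontinuous, jointly convex perspective function (with the conventions that it is $0$ when $\mbf=0,\ \mu=0$ and $+\infty$ when $\mbf\neq0,\ \mu=0$). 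This is the change of variables already noted after \eqref{equation_BB_continuous_primal}; once the action is expressed in these convex variables, the dual pairing is standard and the two problems are confirmed to share the same value.
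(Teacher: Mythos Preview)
Your approach is correct and is essentially the one the paper takes: the text states explicitly that the derivation ``relies on an $\inf$-$\sup$ exchange, similar to the case of a smooth surface $\M$'' and defers the details to the supplementary material, and those details are precisely the Lagrangian computation you outline (multiplier $|v|\mu^t_v$ on each constraint, integration by parts in time to extract the boundary conditions, reindexing the double sum so that $\sum_{v\in V_f}\mu_v=3\hat\mu_f$ produces the face-based kinetic energy, and the Legendre identity for the inner quadratic problem). Your closing remarks on quotienting the non-coercive directions and on passing to the momentum variables $(\mu,\mbf)$ with the perspective function are exactly the technical points needed to make the duality rigorous, and they also close the small gap between the $\vbf=G\varphi$ that falls out of the stationarity condition and the unrestricted $\vbf$ appearing in the statement of \eqref{equation_BB_discrete_primal}.
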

\noindent 
Recall that $M_T \in \R^{3|T| \times 3|T|}$ and $M_V \in \R^{|V| \times |V|}$ are diagonal matrices corresponding to multiplication by the area of the triangles and of the dual cells respectively. 
Then, $-G^\top M_T$ represents a discrete version of the (integrated) divergence operator, suggesting that the constraint can be interpreted as a discrete continuity equation. The derivation of this result, \review{detailed in the supplemental material}, relies on an $\inf$-$\sup$ exchange, similar to the case of a smooth surface $\M$. 

Proposition \ref{proposition_BB_discrete_primal}, very similar to \eqref{equation_BB_continuous_primal}, shows that $W_d$ is the geodesic distance for a Riemannian structure on the space $\P(S)$, at least for non-vanishing densities. Let us detail the metric tensor for a density $\mu \in \P(S)$ with $\min_v \mu_v > 0$. As the set $\P(S)$ is a codimension-1 subset of the linear space $\R^{|V|}$, the tangent space at $\mu$ is naturally $\{ x \in \R^{|V|} \text{ s.t. } \sum_{v \in V} |v| x_v = 0 \}$. In analogy to~\eqref{equation_elliptic}, take $\delta \mu \in T_\mu \P(S)$. We call $\varphi \in \R^{|V|}$ a solution of
\begin{equation}
\label{equation_elliptic_discrete}
M_V  \delta \mu = - (G^\top M_T M_{\hat{\mu}} G) \varphi,
\end{equation}
where $M_{\hat{\mu}} \in \R^{3|T| \times 3 |T|}$ is a diagonal matrix corresponding to multiplication on each triangle by $\hat{\mu}$. As $\hat{\mu} > 0$ everywhere on $V$, this equation is well-posed: The kernel of $G^\top M_T M_{\hat{\mu}} G$ is of dimension one (it consists only of the constant functions), and $M_V  \delta \mu$ lies in the image of this operator. When the distribution $\mu$ is uniform, \eqref{equation_elliptic_discrete} boils down to a Poisson equation, as the operator $- (G^\top M_T M_{\hat{\mu}} G)$ is proportional to the cotangent Laplacian.

One can then define the norm of $\delta \mu$ on the tangent space $T_\mu \P(S)$ as
\begin{equation}
\| \delta \mu \|^2_{T_\mu \P(S)} \eqdef \frac{1}{2} \sum_{f \in T} \| (G \varphi)_f \|^2 |f| \hat{\mu}_f.
\end{equation}
The function $\varphi$ is unique up to an additive constant, which lies in the kernel of the matrix $G$, so this norm is well-defined.

To put everything in one formula, the scalar product $\langle \delta \mu, \delta \nu \rangle_{T_\mu \P(S)}$ between two elements of the tangent space at $\mu$ can be expressed as $(\delta \nu)^\top P_\mu (\delta \mu)$, where the matrix $P_\mu$ is expressed as
\begin{equation}
\label{equation_metric_tensor_discrete}
\review{P_\mu = \frac{1}{2} M_V^\top G^{- \top} (M_{\hat{\mu}} M_T)^{-1} G^{-1} M_V.}
\end{equation}
One can check, \review{and the derivation is provided in the supplemental material}, that $W_d$ is exactly the geodesic distance induced by this metric tensor.

\begin{proposition}
The function $W_d : \P(S) \times \P(S)$ is a distance.
\end{proposition}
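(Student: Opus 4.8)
The plan is to verify the four metric axioms, drawing on whichever of the two equivalent formulations is most convenient: the dynamical primal problem \eqref{equation_BB_discrete_primal} is robust even where the density vanishes, while the dual \eqref{equation_BB_discrete_dual} is the natural tool for proving strict positivity. Throughout I treat $W_d$ as defined by these problems and use that Proposition~\ref{proposition_BB_discrete_primal} makes the two values equal.

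Three of the axioms are immediate from the primal formulation \eqref{equation_BB_discrete_primal}. Since $\mu \geqslant 0$ forces $\hat{\mu} \geqslant 0$, the integrand $\sum_f \frac12 \|\vbf^t_f\|^2 |f| \hat{\mu}^t_f$ is non-negative, so $W_d^2 \geqslant 0$. Taking the constant curve $\mu^t \equiv \bar{\mu}^0$ with $\vbf \equiv 0$ is admissible (the continuity equation reduces to $\dr_t (M_V \mu) = 0$) and has zero cost, giving $W_d(\bar{\mu}^0, \bar{\mu}^0) = 0$. For symmetry, given an admissible pair $(\mu, \vbf)$ joining $\bar{\mu}^0$ to $\bar{\mu}^1$, the time-reversed pair $\tilde{\mu}^t \eqdef \mu^{1-t}$, $\tilde{\vbf}^t \eqdef -\vbf^{1-t}$ joins $\bar{\mu}^1$ to $\bar{\mu}^0$; one checks directly that the continuity constraint is preserved (both $\dr_t \mu$ and $\hat{\mu}\vbf$ change sign) and that the cost, being quadratic in $\vbf$, is unchanged, whence $W_d(\bar{\mu}^0, \bar{\mu}^1) = W_d(\bar{\mu}^1, \bar{\mu}^0)$.

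To complete the identity of indiscernibles I would show $W_d(\bar{\mu}^0, \bar{\mu}^1) > 0$ whenever $\bar{\mu}^0 \neq \bar{\mu}^1$, using the dual \eqref{equation_BB_discrete_dual}: since $W_d^2$ is a supremum there, it suffices to exhibit one feasible $\varphi$ with strictly positive objective. I would try an affine-in-time ansatz $\varphi^t = a - t\,b\,\mathbf{1}$ with $a \in \R^{|V|}$ and $b \geqslant 0$ a scalar. Because $G\mathbf{1} = 0$, the gradient $(G\varphi)^t = Ga$ is constant in time, so the constraint holds as soon as $b \geqslant \max_v \frac{\sum_{f \in T_v} |f|\,\|(Ga)_f\|^2}{6|v|}$; choosing $b$ equal to this maximum, the objective collapses to $\sum_v |v|\, a_v (\bar{\mu}^1_v - \bar{\mu}^0_v) - b$ (using $\sum_v |v| \bar{\mu}^1_v = 1$). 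Taking $a = \epsilon(\bar{\mu}^1 - \bar{\mu}^0)$ makes the linear term equal to $\epsilon \sum_v |v| (\bar{\mu}^1_v - \bar{\mu}^0_v)^2$, a positive multiple of $\epsilon$ exactly when $\bar{\mu}^0 \neq \bar{\mu}^1$, while $b$ scales like $\epsilon^2$; hence for small $\epsilon > 0$ the objective is strictly positive and $W_d > 0$.

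The triangle inequality is the \emph{main obstacle}, and I would obtain it by the standard reparametrization argument from metric geometry, transported to \eqref{equation_BB_discrete_primal}. Write the speed of an admissible curve as $s(t) \eqdef \big( \sum_f \|\vbf^t_f\|^2 |f| \hat{\mu}^t_f \big)^{1/2}$, so that the action equals $\frac12 \int_0^1 s(t)^2 \dint t$; Cauchy--Schwarz then gives $\big( \int_0^1 s\,\dint t \big)^2 \leqslant 2\,(\text{action})$, with equality for constant-speed curves, and time-reparametrization (which preserves admissibility, since rescaling time multiplies both $\dr_t \mu$ and $\hat{\mu}\vbf$ by the same factor) leaves $\int_0^1 s\,\dint t$ unchanged while allowing one to reach constant speed. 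Consequently, given near-optimal curves joining $\bar{\mu}^0$ to $\bar{\mu}^1$ and $\bar{\mu}^1$ to $\bar{\mu}^2$ with actions $A_1, A_2$, I would reparametrize each to constant speed, concatenate them at the shared endpoint $\bar{\mu}^1$ with the time split chosen so the glued curve again has constant speed, and compute its action as $\frac12 (\sqrt{2A_1} + \sqrt{2A_2})^2 = (\sqrt{A_1} + \sqrt{A_2})^2$. This yields $W_d(\bar{\mu}^0, \bar{\mu}^2) \leqslant \sqrt{A_1} + \sqrt{A_2}$, and letting the two curves approach optimality gives $W_d(\bar{\mu}^0, \bar{\mu}^2) \leqslant W_d(\bar{\mu}^0, \bar{\mu}^1) + W_d(\bar{\mu}^1, \bar{\mu}^2)$. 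The delicate points I expect to spend the most care on are verifying that the continuity constraint survives both reparametrization and concatenation (the latter in the weak-in-time sense, using that the density stays continuous across the join even though the velocity may jump) and that a constant-speed reparametrization genuinely exists when $s(t)$ vanishes on part of $[0,1]$.
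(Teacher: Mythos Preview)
Your proposal is correct but takes a genuinely different, far more explicit route than the paper. The paper's proof is a single sentence: it observes that $W_d$ is the geodesic distance of the Riemannian structure described in Subsection~\ref{subsection_Riemannian_discrete} and invokes the general fact (citing \cite[Section 1.4]{Jost2008}) that geodesic distances on manifolds are distances. You instead verify all four axioms directly from the primal and dual formulations, with the reparametrization-and-concatenation argument for the triangle inequality.

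What each approach buys: the paper's argument is brief and conceptually clean, but it leans on a Riemannian structure that, as the paper itself notes, is only well-defined when $\min_v \mu_v > 0$; the boundary of $\P(S)$ is not strictly covered by the textbook statement being cited, so some implicit extension is being assumed. Your approach sidesteps this entirely, since the primal action and the dual problem make sense on all of $\P(S)$ regardless of whether densities vanish. Your dual test function $\varphi^t = \epsilon(\bar\mu^1-\bar\mu^0) - t b \mathbf{1}$ for strict positivity is clean and correct. The delicate points you flag for the triangle inequality (preservation of the continuity constraint under reparametrization and concatenation, existence of a constant-speed reparametrization when $s(t)$ vanishes on a set) are real but standard length-space issues; working with $\epsilon$-near-optimal curves, as you already do, is enough to handle them without needing actual minimizers.
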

\begin{proof}
It is a general fact that the geodesic distance on a manifold (defined by minimization over all possible trajectories) is a distance, see for instance \cite[Section 1.4]{Jost2008}.
\end{proof}

A natural question is whether the space $(\P(S), W_d)$ converges to $(\P(\M), W_2)$ as $S$ becomes a finer and finer discretization of a manifold $\M$. For a discrete Wasserstein distance like the one of Maas \shortcite{Maas2011}, based on the graph structure of $S$---which corresponds to the case where velocity fields are discretized by their values on edges and a particular choice of scalar product---the answer is known to be positive in the case where $\M$ is the flat torus \cite{Gigli2013,Trillos2017} in the sense of 
Gromov--Hausdorff convergence of metric spaces, \review{while a very recent work by Gladbach, Kopfer and Mass~\shortcite{Gladbach2018} has refined the analysis and exhibits necessary conditions for such a convergence to hold}. The high technicality of the proofs of these results, however, indicates that the question for our particular definition is likely to be challenging and out of the scope of the present article.


\section{Time discretization of the geodesic problem}
\label{section_algorithm}


\begin{figure}
\begin{center}
\begin{tikzpicture}[scale = 1.5]


\draw (-4,-0.7) -- (-2,-0.7) ;

\foreach \k in {0,1,...,4}
	\fill (-4+0.5*\k,-0.7) circle (0.05) ;
\foreach \k in {0,1,...,3}
	\fill (-3.75+0.5*\k+0.04,0.04-0.7) -- (-3.75+0.5*\k+0.04,-0.04-0.7) -- (-3.75+0.5*\k-0.04,-0.04-0.7) -- (-3.75+0.5*\k-0.04,0.04-0.7) -- cycle ;

\draw (-4,-0.6) node[above]{$t=0$} ;
\draw (-2,-0.8) node[below]{$t=1$} ;

\draw[style = <->, dashed] (-3.5,-0.55) -- (-3,-0.55) ;
\draw (-3.25,-0.5) node[above]{$\tau$} ;


\draw (-3.5,0.5) node{$\Gtimec$};
\fill (-4+0.04,0.5+0.04) --(-4-0.04,0.5+0.04) -- (-4-0.04,0.5-0.04)-- (-4+0.04,0.5-0.04) -- cycle ;

\draw (-3.5,0.1) node{$\Gtimes$};
\fill (-4,0.1) circle (0.05) ;


\fill [color = gray!60] (0,0.577) -- (0.5,0.289) -- (0.5,-0.289) -- (0,-0.577) -- (-0.5,-0.289) -- (-0.5,0.289) -- cycle ;

\draw (0,0) -- (1,0) ;
\draw (0,0) -- (0.5,0.866) ;
\draw (0,0) -- (-0.5,0.866) ;
\draw (0,0) -- (-1,0) ;
\draw (0,0) -- (-0.5,-0.866) ;
\draw (0,0) -- (0.5,-0.866) ;

\draw (1,0) -- (0.5,0.866) -- (-0.5,0.866) -- (-1,0) -- (-0.5,-0.866) -- (0.5,-0.866) -- cycle ;

\fill (0,0) circle (0.05);

\end{tikzpicture}
\end{center}
\caption{Left: temporal grids $\Gtimec$ and $\Gtimes$ for $N = 4$. Right: a vertex ($\bullet$) surrounded by $6$ adjacent triangles, the dual barycentric cell is in gray.}
\label{figure_meshes}
\end{figure}

\subsection{Discrete geodesic}

So far, we have defined a structure-preserving notion of optimal transport on a triangle mesh.  While our model has many properties in common with the continuum version of transport, the resulting optimization problem is infinite-dimensional since the unknown $\mu^t$ is indexed by a time variable $t\in[0,1]$.  Our next step is to derive a time discretization that approximates this interpolant in practice.
Put simply, we want to solve the geodesic problem, i.e., given $\bar{\mu}^0, \bar{\mu}^1 \in \P(S)$, we want to approximate the solution $\mu$ of \eqref{equation_BB_discrete_primal}. To this end, we discretize in time the dual problem \eqref{equation_BB_discrete_dual}.

Our infinite-dimensional problem can be classified as a second-order cone program (SOCP) \cite[Section 4.4.2]{Boyd2004};  we choose a discretization that preserves this structure. The main issue is that with a standard finite difference scheme, the derivative $\dr_t \varphi$ ends up on a grid staggered w.r.t.\ the one on which $\varphi$ is defined. Hence, we average to define the constraint on a compatible grid.
We apply the same idea as before: With the term involving $\| G \varphi \|^2$, we average \emph{after} taking the square  to avoid the introduction of any spurious null space.

Let $N$ be the number of discretization points in time. We consider two grids: the \emph{staggered grid} $\Gtimes \eqdef \{ k/N \ : \ k = 0,1, \ldots,N \}$ and the \emph{centered grid} $\Gtimec \eqdef \{ (k+1/2)/N \ : \ k = 0,1, \ldots N-1 \}$, see Figure \ref{figure_meshes}  The staggered grid has $N+1$ elements whereas the centered one has only $N$. We call $\tau \eqdef 1/N$ the time step. The linear operator $D : \R^{\Gtimes} \to \R^{\Gtimec}$ defined by
\begin{equation}
(D \varphi)^t \eqdef \frac{\varphi^{t + \tau / 2} - \varphi^{t - \tau / 2}}{\tau},
\end{equation}
is a  natural discretization of the time derivative.

Next, we discretize $\varphi \in \R^{ \Gtimes \times |V|}$ a function depending both on space and time. The constraint $\dr_t \varphi^t_v +\frac{1}{2} \frac{\sum_{f \in T_v} |f| \| (G \varphi)^t_f \|^2 }{  3 |v|}   \leqslant 0$ will be imposed on the centered grid $\Gtimec$. It is enough to replace $\dr_t \varphi$ by $D \varphi$. On the other hand, the term $\frac{1}{2} \frac{\sum_{f \in T_v} |f| \| (G \varphi)^t_f \|^2 }{  3 |v|}$, which is defined on $\Gtimes$, will be also averaged in time. In other words, the fully discrete problem reads:

\begin{equation}\label{equation_fully_discrete_problem}
\boxed{
\begin{array}{l}
\text{Find } \varphi \in \R^{ \Gtimes \times |V|} \text{ maximizing }  \\
\begin{cases}
&\sum_{v \in V} |v| \varphi^1_v \bar{\mu}^1_v - \sum_{v \in V} |v| \varphi^0_v \bar{\mu}^0_v  \\
&\text{s.t. }  (D \varphi)^t_v + \displaystyle{ \frac{1}{2} \sum_{i\!\in\!\{\!-1,\!1\!\}} \frac{1}{2} \frac{\sum_{f \in T_v} |f| \| (G \varphi)^{t + i \tau /2}_\review{f} \|^2 }{  3 |v|}}   \leqslant 0 \\ & \text{ for all } (t,v) \in \Gtimec \times V,
\end{cases}
\end{array}
}
\end{equation}
The constraint still stays quadratic, and hence the fully-discrete problem is still a SOCP.

\subsection{Algorithm}
\label{subsection_algorithm}

To tackle \eqref{equation_fully_discrete_problem} algorithmically, we follow Benamou and Brenier~\shortcite{Benamou2000} by building an augmented Lagrangian and using the Alternating Direction Method of Multipliers (ADMM). The main issue is that the constraint is nonlocal---since it involves discrete derivatives---and nonlinear.  We construct a splitting of the problem that decouples these two effects.

To this end, we introduce two additional variables $A$ and $\Bbf$. We enforce the constraint $A = D \varphi$, and hence $A$ is defined on the grid $\Gtimec \times V$. On the other hand, the variable $\Bbf$ stores the values of $G \varphi$ but with some redundancy. Each $(G\varphi)_f^{t}$ appears in more than one inequality constraint in \eqref{equation_fully_discrete_problem}, and $\Bbf$ is chosen so that each component of $\Bbf$ appears in only one inequality constraint. In detail, $\Bbf$ is defined on the grid $\Gtimec \times \{\pm 1\} \times T \times V$ with the constraint that $(f,v) \in T \times V$ is such that $v \in f$. We will impose the constraint that $\Bbf^{t,i}_{f,v} = (G\varphi)_f^{t + i \tau / 2}$ for all $(t,i,f,v) \in \Gtimec \times \{\pm 1\} \times T \times V$.

We introduce the notation $q = (A,\Bbf)$ and write $q = \Lambda \varphi$ if $A,\Bbf$ satisfy the relations written above. 
Define
\begin{equation}
\label{equation_definition_F}
F(\varphi) = \sum_{v \in V} |v| \varphi^1_v \bar{\mu}^1_v - \sum_{v \in V} |v| \varphi^0_v \bar{\mu}^0_v,
\end{equation}
and $C$ to be the function such that $C(A,\Bbf) = C(q) = 0$ if
\begin{align}
\forall (t,v) &\in \Gtimec \times V,\nonumber\\&\ A^t_v + \displaystyle{ \frac{1}{2} \sum_{i \in \{-1,1\}} \frac{1}{2} \frac{\sum_{f \in T_v } |f| \| \Bbf^{t,i}_{v,f} \|^2 }{  3 |v|}}   \leqslant 0 \label{equation_constraint_AB_definition}
\end{align}
and $- \infty$ otherwise.
The discrete problem \eqref{equation_fully_discrete_problem} can be written
\begin{equation}
\max_{q = \Lambda \varphi} F(\varphi) + C(q).
\end{equation}
The idea is to introduce a Lagrange multiplier $\sigma = (\mu, \mbf)$ associated to the constraint $q = \Lambda \varphi$ and to build the augmented Lagrangian
\begin{equation}
\label{equation_discrete_lagrangian}
L(\varphi,q,\sigma) = F(\varphi) + C(q) + \langle \sigma, q - \Lambda \varphi \rangle - \frac{r}{2} \| q - \Lambda \varphi \|^2.
\end{equation}
In this equation, $\langle \sigma, q - \Lambda \varphi \rangle = \langle \mu, A - D \varphi \rangle_V + \langle \mbf, \review{\Bbf} - G \varphi \rangle_T$, where the scalar product $\langle \ , \ \rangle_V$ (resp.\ $\langle \ , \ \rangle_T$) is weighted by the \review{areas} of the vertices (resp.\ the triangles) and the time step $\tau$.

The saddle points of the Lagrangian \eqref{equation_discrete_lagrangian}---which do not depend on the parameter $r$---are precisely the solutions to the problem \eqref{equation_fully_discrete_problem}, and $\mu$, the first component of $\sigma$ associated to the constraint $A = D \varphi$, is an approximation of the time-continuous geodesic \eqref{equation_BB_discrete_primal}. On the other hand, the second component $\mbf$ is an approximation of the momentum $\mu \vbf$.  

To compute a saddle point, we use ADMM, which consists in iterations of the following form \cite{Boyd2011}:
\begin{enumerate}
\item Given $q$ and $\sigma$, find $\varphi$ that maximizes $L$.
\item Given $\varphi$ and $\sigma$, find $q$ that maximizes $L$.
\item Do a gradient descent step (with step $r$) to update $\sigma$.
\end{enumerate}
The parameter \review{$r>0$} is arbitrary and  tuned to speed up the convergence; see \cite{Boyd2011} for discussion. In our case, \review{details of} the iterations are briefly presented below and summarized in Algorithm \ref{algorithm_geodesics}.

\begin{algorithm}
\caption{{\sc Geodesic Computation}}
\label{algorithm_geodesics}
\begin{algorithmic}

\Function{Geodesic}{$\bar{\mu}^0, \bar{\mu}^1$}

\State {Initialize $\varphi,A,\Bbf,\mu, \mbf \leftarrow 0$}
\While{PrimalResidual \textbf{and} DualResidual $> \varepsilon$}

\State $\varphi \leftarrow$ solution of \eqref{equation_discrete_Poisson}

\For{$s,v \in \Gtimec \times V$}
\State update $A$ and $\Bbf$ by solving \eqref{equation_minimization_AB}
\EndFor

\State Update $\mu$ and $\mbf$ through \eqref{equation_update_dual}

\EndWhile

\State\Return $\mu$

\EndFunction
\end{algorithmic}
\end{algorithm}

\paragraph{Maximization w.r.t.\ $\varphi$}

The Lagrangian $L$ is simply a quadratic function of $\varphi$, so its \review{maximization} amounts to inverting the matrix $\Lambda^\top \Lambda$ which, in our case, behaves like a space-time Laplacian.

More precisely, writing $\varphi \in \R^{\Gtimes \times V}$ as a $(N+1) \times |V|$ matrix (with rows indexed by time and columns by space), the equation satisfied by a \review{maximizer} of $L$ \review{over $\varphi$} reads
\begin{multline}
\label{equation_discrete_Poisson}
r \left[ D^\top M_V D \varphi  + 3  (E^\top E) \varphi ( G^\top M_T G )   \right]  \\ = N(\bar{\mu}^1 I_{t = 1} -  \bar{\mu}^0 I_{t = 0}) - D^\top M_V( \mu - r A ) - (\mbf - r \Bbf) M_T \tilde{G}^\top  .
\end{multline}
Again recall that the unknown here is $\varphi$; the remaining symbols are fixed matrices.
In this equation, $E \in \R^{\Gtimec \times \Gtimes}$ stands for the averaging in time defined by $(E \varphi)^t = \frac{\varphi^ {t - \tau / 2} + \varphi^{t + \tau / 2}}{2}$. The matrices $I_{t= 0}$ and $I_{t=1} \in \R^{\Gtimes \times V}$ stand for the indicator of $t=0$ (resp.\ $t=1$), namely they contain zeros except on the first (resp.\ last) row which is full of ones. The factor $3$ comes from the fact that each value of $(G \varphi)_f$ is duplicated $3$ times in $\Bbf$, one for each vertex which belongs to $f$. The operator $\tilde{G}$ is almost the same as $G$ but takes in account the fact that the values of $G \varphi$ are duplicated in $\Bbf$ (hence in $\mbf$): $\tilde{G}$ corresponds to the adjoint of the second component of the operator $\Lambda$.

$(D^\top M_V D )$ is the discrete Laplacian in time, and $ G^\top M_T G$ is the discrete Laplacian on $S$.  In fact, \eqref{equation_discrete_Poisson} is a Poisson equation with a space-time Laplacian. Equation \eqref{equation_discrete_Poisson} admits more than one solution but they only differ by a constant \review{whose value} does not modify the value of $L$.


The linear operator to invert is the same at each iteration, \review{and} hence standard precomputation techniques can be used to speed up \review{the application of its inverse}.


\paragraph{Maximization w.r.t.\ $A,\Bbf$}

The Lagragian $L$ is also quadratic w.r.t.\ $q$, but there is a quadratic constraint on these two variables due to the presence of $C(q)$. Because of the redundancy in $\Bbf$, each component of $A$ or $\Bbf$ is subject to only one constraint. More precisely, we can check that one needs, for each $(t,v) \in \Gtimec \times V$, to minimize
\begin{multline}
\label{equation_minimization_AB}
|v| \left( A^t_v - (\review{D} \varphi)^t_v - \frac{1}{r} \mu^t_v \right)^2 \\ +  \frac{|f|}{2} \sum_{i \in \{ \pm 1 \}} \sum_{v \in V_f} \left\| \Bbf^{t,i}_{f,v} - (G \varphi)^{t + i \tau / 2}_f - \frac{1}{r} \mbf^{t,i}_{f,v}  \right\|^2
\end{multline}
under the constraint \eqref{equation_constraint_AB_definition}. \review{This minimization} amounts to a Euclidean projection on the set of $A,\Bbf$ satisfying \eqref{equation_constraint_AB_definition}, which can be carried out by solving a cubic equation in one variable, independently on each point of $\Gtimec \times V$. These equations are solved using Newton's method.

\paragraph{Dual update}

This gradient descent corresponds to the following operations:
\begin{equation}
\label{equation_update_dual}
\begin{array}{rl}
\mu^t_v & \leftarrow \mu^t_v - r  \left( A_v^t - (\review{D} \varphi)^t_v \right) \\
\mbf^{t,i}_{f,v} & \leftarrow \mbf^{t,i}_{f,v} - r \left(  \Bbf^{t,i}_{f,v} - (G\varphi)_f^{t + i \tau / 2} \right),
\end{array}
\end{equation}
for any $(t,v) \in \Gtimec \times V$ and any $(t,i,f,v) \in \Gtimec \times \{ \pm 1 \} \times T \times V$.

\section{Experiments}

Recall that our main practical contribution is to be able interpolate between probability distributions using an optimal transport model that preserves structure from the non-discretized case. We will illustrate the robustness of our method: It can handle peaked distributions, and \review{it lifts} the intrinsic geometry of the discrete surface while being insensitive to the \review{choice of mesh topology}.


The typical computation is the following: We enter the data $\bar{\mu}^0, \bar{\mu}^1$ and compute a solution of the discrete problem~\eqref{equation_fully_discrete_problem}. Then, we plot the evolution over time of $\mu$, which approximates the geodesic in the Riemannian metric described in Subsection \ref{subsection_Riemannian_discrete}. As a byproduct of the optimization process, we also obtain the optimal momentum $\mbf = \mu \vbf$, which can be also plotted, see Figure \ref{figure_momentum}. \review{The code used to conduct all our experiments is available at \url{https://github.com/HugoLav/DynamicalOTSurfaces}.}

As the color map is sometimes normalized independently for different time instants on the same interpolation curve, let us underscore this fact: For every example, we have checked numerically that the densities are always nonnegative and that mass is always preserved over time.

\subsection{Convergence of the ADMM iterations}

\begin{figure}
\begin{tikzpicture}
\node[inner sep=0pt] at (2.2,4.5)
    {\includegraphics[width=.11\textwidth]{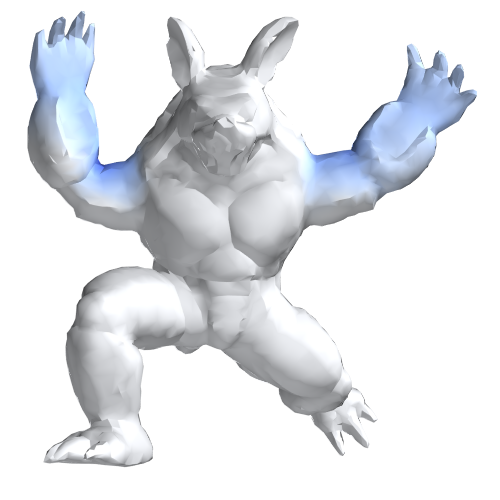}};

\node[inner sep=0pt, opacity = 1] at (3.9,3)
    {\includegraphics[width=.11\textwidth]{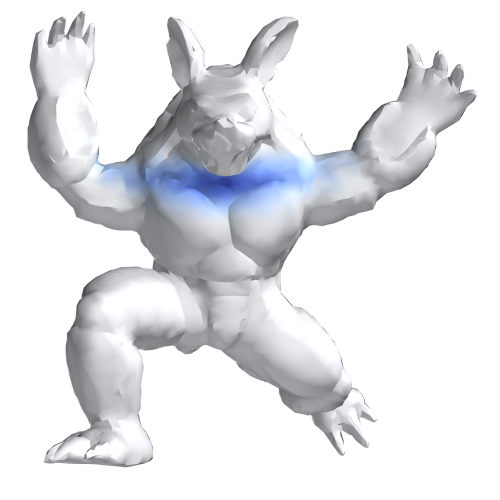}};

\node[inner sep=0pt, opacity = 1] at (5.9,2.5)
    {\includegraphics[width=.11\textwidth]{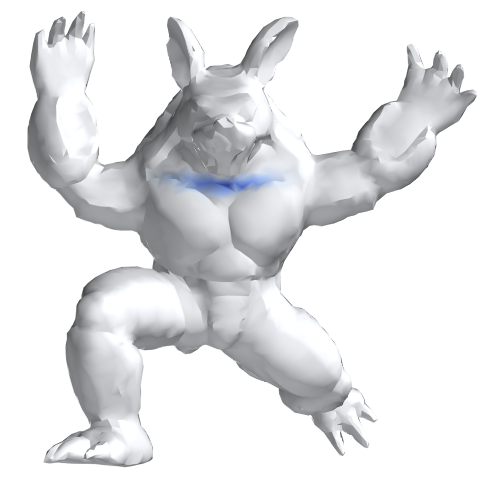}};

\begin{semilogyaxis}[
  xlabel=Number of iterations,
  ylabel=$L^2$ norm, opacity = 1]
\addplot [blue, thick = 1pt] table [x=iteration, y=primal]{data_convergence_ADMM.txt};
\addlegendentry{Primal residual}

\addplot [red, thick = 1pt] table[x=iteration, y=dual]{data_convergence_ADMM.txt};
\addlegendentry{Dual residual}

\addplot [blue, only marks] table[x=iterationMark, y=primalMark]{data_convergence_ADMM.txt};
\end{semilogyaxis}

\draw [->, blue, dashed] (0.58,3.93) -- (1.1,4.2) ;

\draw [->, blue, dashed] (0.63,3.73) -- (2.8,2.7) ;

\draw [->, blue, dashed] (6.3,0.5) -- (6,1.5) ;

\end{tikzpicture}
\caption{Amplitude of the primal and dual residual \cite[Section 3.3]{Boyd2011} in $L^2$ norm. The \review{distributions} $\bar{\mu}^0$ and $\bar{\mu}^1$ are delta functions located on respectively the right and left hand of the armadillo. We also show the midpoint $\mu^{1/2}$ for different numbers of iterations (10,50 and 5000). After a few hundred iterations, there is no visible difference in $\mu^{1/2}$. There is a jump in the value of the dual residual at around 4600 iterations.  It is due to a change in the value of the parameter $r$, which is updated according to the heuristic rule presented in Section 3.4.1 of \cite{Boyd2011}.}
\label{figure_convergence_ADMM}
\end{figure}

For fixed boundary data $\bar{\mu}^0$ and $\bar{\mu}^1$, we \review{plot} in Figure \ref{figure_convergence_ADMM} the primal and dual error \review{defined} by Boyd et al.~\shortcite{Boyd2011}, as \review{a} function of the number of iterations of the ADMM scheme. We usually need on the order of a few thousand iterations to satisfy our convergence criteria, this number being dependent of the boundary data $\bar{\mu}^0, \bar{\mu}^1$ (the more diffuse, the fewer iterations are needed).

Because our objective functional is scaled \review{according} to the geometry of the mesh (i.e.\ scalar products are weighted by the areas of the triangles and the number of time steps), the number of iterations needed does not depend on the size of the resolution of the mesh nor the number of discretization points in time, but the computation time needed per iteration does. \review{Typical values of the timings are given in  Table \ref{table_timing}, they are of the order of $1$ second per ADMM iterations for meshes with a few thousand vertices.}  



\begin{table*}
\begin{small}
\begin{tabular}{|c|c||c|c|c|c||c|c||c|c|}
\hline
Mesh & Figure & $N$ & $|V|$ & $|T|$ & $\alpha$ & ADMM Iters. & ADMM Time (s) & Mosek Time (s) & CVX Total (s)\\
\hline
Punctured sphere & 10 & 13 & 1020 & 2024 & 0.02 & 546 & 16 & 23 & 27 \\
Punctured sphere & 10 & 31 & 1020 & 2024 & 0.02 & 547 & 47 & 114 & 122 \\
Hand & 8 & 13 & 1515 & 3026 & 0.02 & 846 & 47 & 37 & 47 \\
Hand & 8 & 31 & 1515 & 3026 & 0.02 & 858 & 97 & 174 & 191 \\
Armadillo & 7 & 31 & 5002 & 10000 & 0 & 929 & 332 & 766 & 882 \\
Armadillo & 7 & 63 & 5002 & 10000 & 0 & 808 & 649 & 3719 & 3970 \\
Armadillo & 7 & 31 & 5002 & 10000 & 1 & 308 & 116 & 938$^\ast$ & 1054 \\
Face & 2 & 31  & 5002 & 10000 & 0.1 & 415 & 155 & 1829 & 1944\\
Airplane & 9 & 31 & 3772 & 7540 & 0.1 & 535 & 144 & 764 & 831\\
Planar square & 3 & 31 & 11838 & 23242 & 0 & 565 & 473 & 10270 & 11082\\
\hline
\end{tabular}
\end{small}
\caption{\review{Timing data for various meshes and boundary data from the figures (numbers listed in the table). $N$ denotes the number of time discretization points and $\alpha$ is the value of the congestion regularization parameter (see Section \ref{subsection_regularization}). For the ADMM method, the number of iterations and timing are given. Iterations were stopped once an error of $10^{-4}$ was reached for the $L^2$ norm of both the primal and dual residual. One can see that the time per iteration depends on the size of the mesh and the temporal grid, but the number of iteration is quite insensitive to these parameters and rather depends on the boundary conditions and the regularization parameter. For the CVX implementation of the optimization problem, the solver time and the total time (includes CVX pre-processing) are given. Standard precision settings were used, but are hard to interpret absolutely due to unknown algebraic rearrangement of the problem. $^\ast$ denotes that CVX reported a failure in this case. Results obtained on an 8-core 3.60GHz Intel i7 processor with 32GB RAM.}}
\label{table_timing}
\end{table*}

\subsection{\review{CVX implementation}}
\label{subsection_cvx}

\review{Since} the optimization problem in Equation~\eqref{equation_regularized_optimization} is a convex cone problem, we have also used a straightforward implementation in CVX \cite{cvx,gb08}, with Mosek as a solver~\cite{mosek}. \review{This approach is provided as a simpler alternative to the ADMM implementation, and has comparable performance on small meshes with standard precision settings (fewer than 1000 vertices). In general, it is difficult to compare the error thresholds across the two implementations due to algebraic rearrangements performed by CVX.}  \review{See Table \ref{table_timing}.}

\review{
\subsection{Convergence with discretization in space and time}}

\begin{figure}
\begin{tabular}{cccccc}

\multicolumn{2}{c}{\includegraphics[width=.13\textwidth]{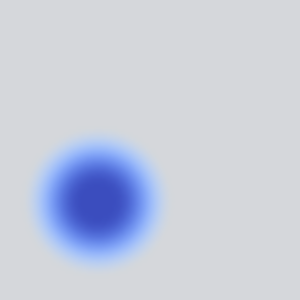}} &
\multicolumn{2}{c}{\includegraphics[width=.13\textwidth]{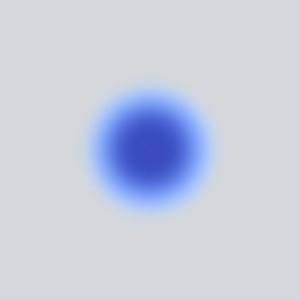}} &
\multicolumn{2}{c}{\includegraphics[width=.13\textwidth]{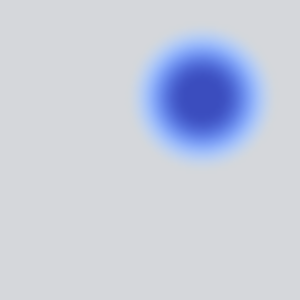}} \\ 
 
\multicolumn{2}{c}{$t=0$} &
\multicolumn{2}{c}{$t=\nicefrac{1}{2}$} &
\multicolumn{2}{c}{$t=1$} \\

\hline 
 
& & & & & \\

\multicolumn{3}{c}{
\begin{tikzpicture}[scale = 0.45]
\begin{loglogaxis}[
  xlabel=Number of discretization points in time,
  ylabel=Error with ground truth in $L^1$ norm]
\addplot table [x=TimeNumber, y=TimeError]{data_convergence_plane.txt};
\end{loglogaxis}
\end{tikzpicture}} 
&
\multicolumn{3}{c}{
\begin{tikzpicture}[scale = 0.45]
\begin{loglogaxis}[
  xlabel=Number of discretization points in space,
  ylabel=Error with ground truth in $L^1$ norm]
\addplot table [x=SpaceNumber, y=SpaceError]{data_convergence_plane.txt};
\end{loglogaxis}
\end{tikzpicture}}

\end{tabular}
\caption{\review{Top row: the test case. The inputs, i.e.\ probability distributions at times $t=0$ and $t=1$, correspond to the same density translated two different ways. Optimal transport predicts that at time $t=\nicefrac{1}{2}$ we should observe the same density again, but translated to the midpoint between the two inputs; this gives us ground truth we can use to verify our algorithm's output. 
Bottom row: convergence plots. On the left: error, measured in $L^1$ norm, where the mesh is fixed (regular triangle mesh with $100$ points per side of the square) and the number $N$ of discretization points in time varies. On the right: error, measured in $L^1$ norm, where the number of discretization points in time is fixed ($127$ points) and the mesh is a regular triangle mesh whose number of points per side varies and is plotted on the $x$-axis.}}
\label{figure_plane_convergence}
\end{figure}

\review{
As indicated in Section \ref{section_discrete_OT}, it is not known theoretically whether our discrete distance converges to the true Wasserstein distance when the mesh is refined. This is also the case as far as the time discretization is concerned; one could likely adapt the method of proof of Erbar et al.~\shortcite{Erbar2017}, but doing so is out of the scope of this article.

In Figure \ref{figure_plane_convergence}, however, we present some experiments indicating that convergence under space and time refinement is likely to be true. These were conducted in the simplest case: translation of a given density on a flat space. For this problem, the ground truth is known, and for a flat space it is clear what it means to refine the mesh:  We have use a regular triangle mesh with an increasing number of points per side. The error was evaluated at time $t= \nicefrac{1}{2}$ between the computed geodesic and the ground truth. As a measure of error, as the distributions are compactly supported, we use a total variation norm (in other words the $L^1$ norm between the densities) rather than the Kullback--Leibler divergence.  
As expected, we observe a decrease in error as the temporal and spatial meshes are refined.}

\subsection{Congestion and regularization}
\label{subsection_regularization}

\begin{figure*}
\begin{center}
\begin{tabular}{l|ccccc}
	 $\alpha = 0$ &
    \raisebox{-.5\height}{\includegraphics[width=.15\textwidth]{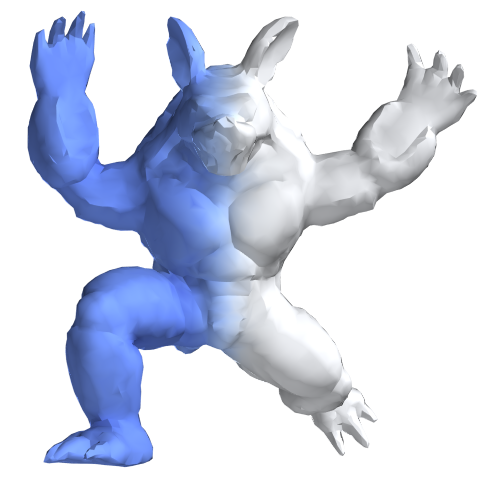}} &
   \raisebox{-.5\height}{ \includegraphics[width=.15\textwidth]{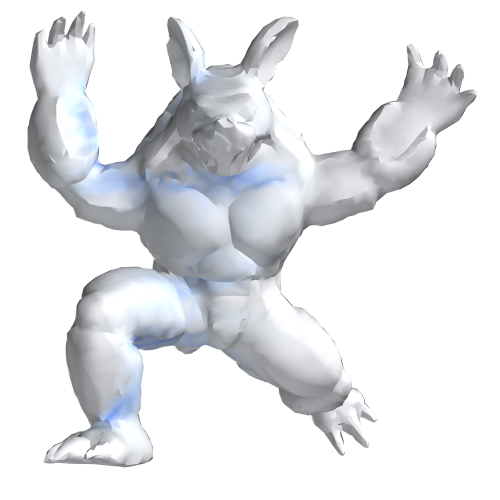}} &
    \raisebox{-.5\height}{\includegraphics[width=.15\textwidth]{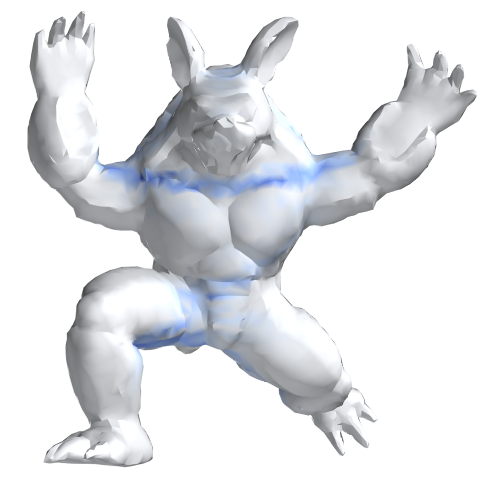}} &
    \raisebox{-.5\height}{\includegraphics[width=.15\textwidth]{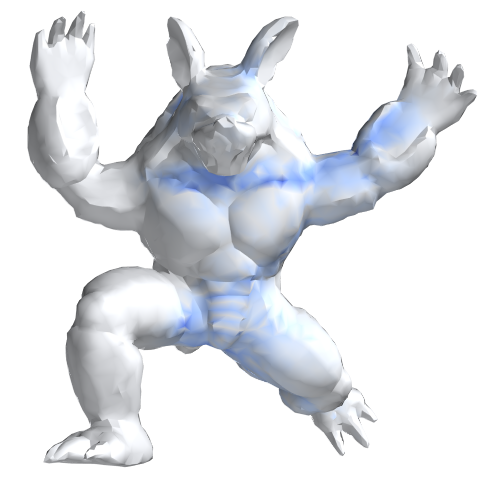}} &
    \raisebox{-.5\height}{\includegraphics[width=.15\textwidth]{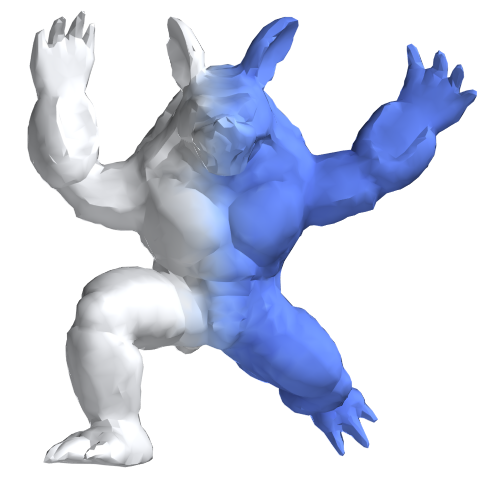}} \\

	 $\alpha = 10^{-2}$ &
    \raisebox{-.5\height}{\includegraphics[width=.15\textwidth]{fig8_3_0.png}} &
    \raisebox{-.5\height}{\includegraphics[width=.15\textwidth]{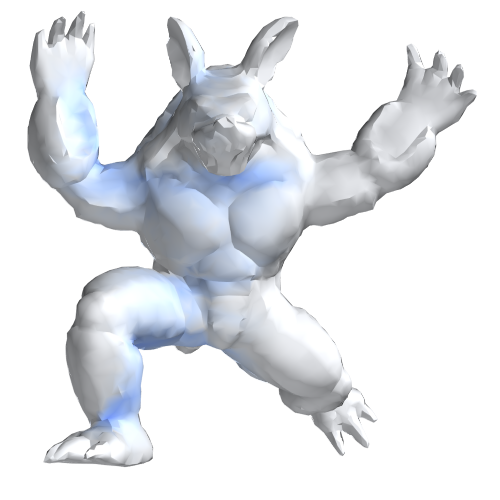}} &
    \raisebox{-.5\height}{\includegraphics[width=.15\textwidth]{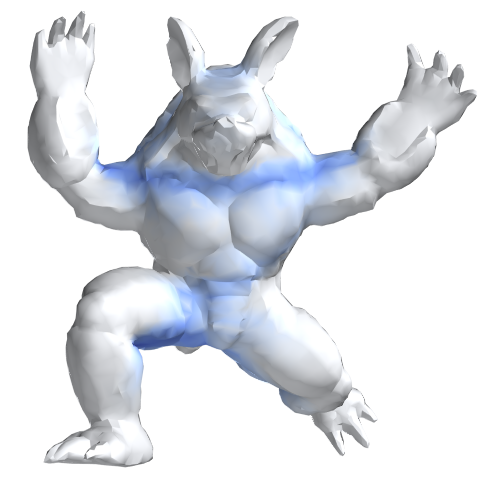}} &
    \raisebox{-.5\height}{\includegraphics[width=.15\textwidth]{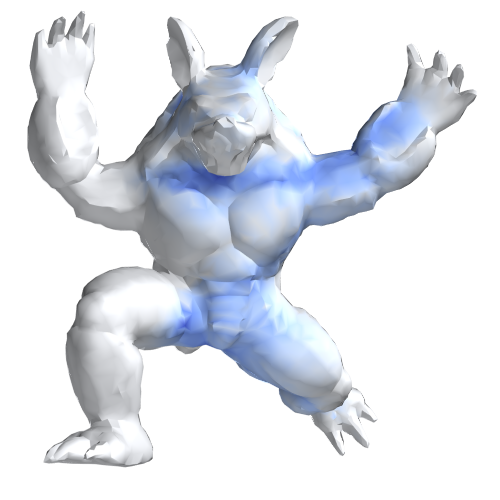}} &
    \raisebox{-.5\height}{\includegraphics[width=.15\textwidth]{fig8_3_30.png}} \\

    $\alpha = 10^{-1}$ &
    \raisebox{-.5\height}{\includegraphics[width=.15\textwidth]{fig8_3_0.png}} &
    \raisebox{-.5\height}{\includegraphics[width=.15\textwidth]{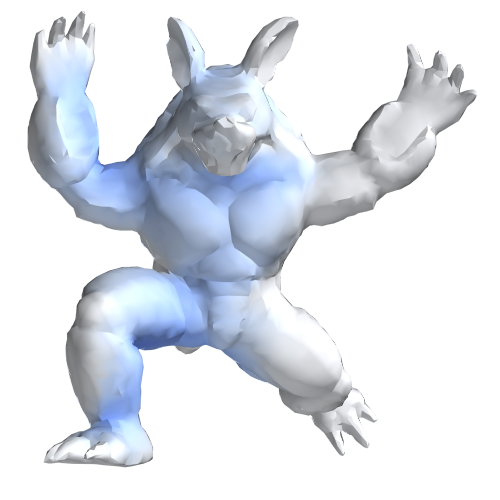}} &
    \raisebox{-.5\height}{\includegraphics[width=.15\textwidth]{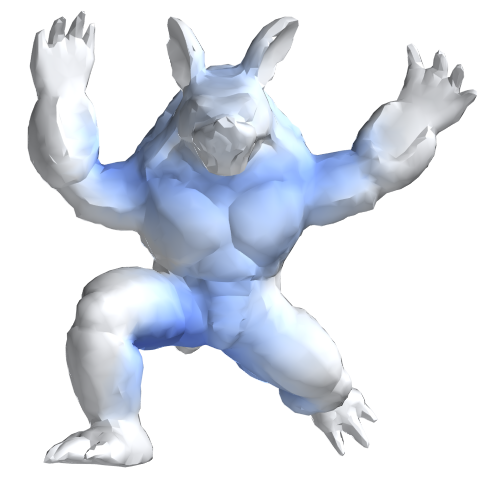}} &
    \raisebox{-.5\height}{\includegraphics[width=.15\textwidth]{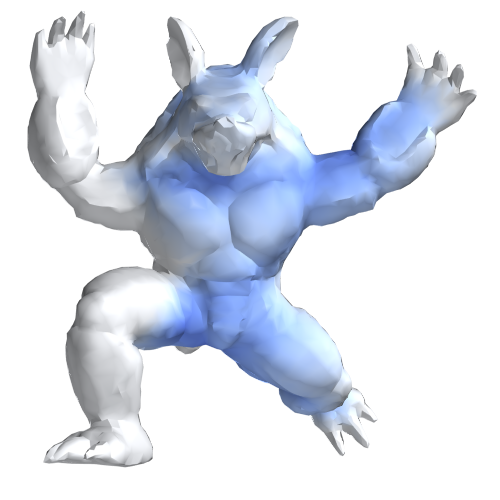}} &
    \raisebox{-.5\height}{\includegraphics[width=.15\textwidth]{fig8_3_30.png}} \\

    $\alpha = 1$ &
    \raisebox{-.5\height}{\includegraphics[width=.15\textwidth]{fig8_3_0.png}} &
    \raisebox{-.5\height}{\includegraphics[width=.15\textwidth]{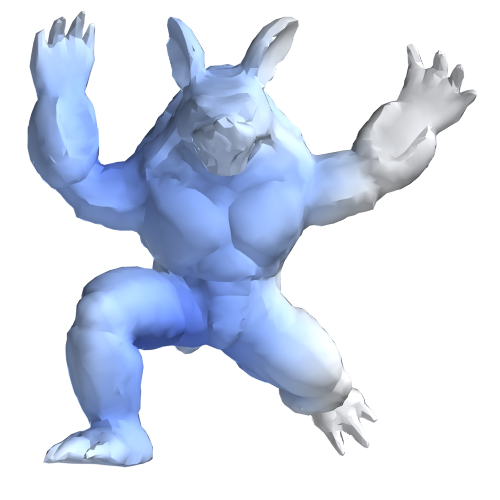}} &
    \raisebox{-.5\height}{\includegraphics[width=.15\textwidth]{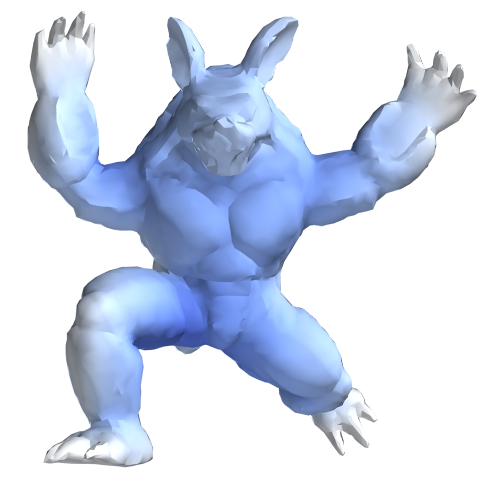}} &
    \raisebox{-.5\height}{\includegraphics[width=.15\textwidth]{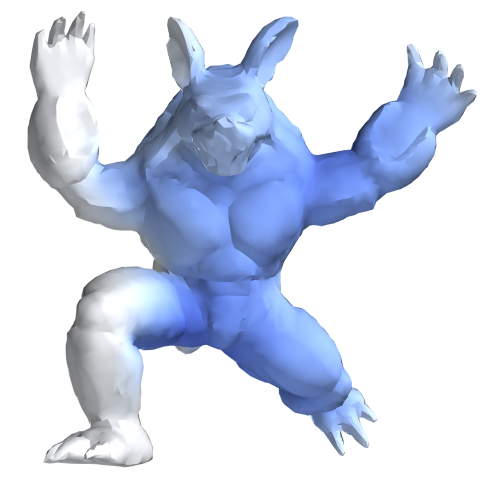}} &
    \raisebox{-.5\height}{\includegraphics[width=.15\textwidth]{fig8_3_30.png}} \\

	\hline

    & $t=0$ & $t=\nicefrac{1}{4}$ & $t = \nicefrac{1}{2}$ & $t=\nicefrac{3}{4}$ & $t=1$

\end{tabular}
\end{center}
\caption{Effect of the regularizing parameter $\alpha$ penalizing congestion. On each row, the interpolation between the same boundary data (distributions located on the right and on the left of the armadillo) is shown. Different rows correspond to different to different values of $\alpha$. The color in each image is normalized independently from the others, explaining the change in intensity. Mass is always preserved along the interpolation.}
\label{figure_regularization}
\end{figure*}

In optimal transport there is no price paid for highly-congested densities. \review{Imagine} the probability distributions as an assembly of particles moving along the surface. 
Along a geodesic in Wasserstein space, each particle evolves in time by following a geodesic on the surface---but does not feel the presence of its neighbors.

Now imagine, due to the particular structure of the triangle mesh, there is a small shortcut \review{in terms of geodesic distance through which} all geodesics tend to concentrate. This is likely to appear near a hyperbolic vertex \cite{Polthier2006}. Then all the particles have the incentive to take this shortcut, resulting in densely-populated zones, as they are not prone to congestion.  As an example, see the first row of Figure \ref{figure_regularization} in which, to go from the left to the right of the armadillo, all the particles go through only two paths, leaving the rest of the mesh without any \review{mass.}

This effect, although visually unpleasant, would be observed on a smooth surface $\M$ as soon as geodesics concentrate in some regions. A way to remove this artifact is to penalize congestion; we can do so with little modification to the algorithm.

We penalize the densities by their $L^2$ \review{norms}: The choice of the exponent $2$ is important, as it \review{preserves} the quadratic structure of the optimization problem. Namely, we add to the Lagrangian \eqref{equation_discrete_lagrangian} the term
\begin{equation}
\review{\frac{\alpha \tau}{2} \sum_{t \in \Gtimec} \sum_{v \in V} |v| |\mu^t_v|^2 = \sup_{\lambda} \sum_{t \in \Gtimec} \sum_{v \in V} \tau |v| \left( \lambda^t_v \mu_v^t - \frac{1}{2 \alpha} (\lambda_v^t)^2  \right),}
\end{equation}
where the parameter $\alpha$ tunes the scale of the congestion effect and $\lambda \in \R^{\Gtimec \times V}$ corresponds to the dual variable associated to the congestion constraint.

Using the notation from Section \ref{subsection_algorithm}, one can write the problem as maximizing
\begin{equation} \label{equation_regularized_optimization}
\max_{\hat{\Lambda}(\varphi,\lambda)=q }F(\varphi,\lambda) + C(q),
\end{equation}
but this time
\begin{equation}
F(\varphi,\lambda) = \eqref{equation_definition_F} - \review{\frac{1}{2\alpha} \sum_{t \in \Gtimec} \sum_{v \in V} \tau |v|(\lambda_v^t)^2}
\end{equation}
and $\hat{\Lambda}(\varphi, \lambda) = (-\lambda,0) - \Lambda(\varphi)$. Then one runs exactly the same algorithm, with a straightforward adaptation of the update formulas.

After regularization, the interpolation is no longer a geodesic.  For instance, the interpolation between two instances of the same probability distribution is not constant in time, because the $L^2$ norm potentially can be reduced by diffusing outward in the intermediate time steps. 
On the other hand, undesirable sharp features and oscillations can be removed, as seen in Figure \ref{figure_regularization}. Note that regardless of the level of regularization, the interpolating curves are still valued in \review{$\P(S)$,} i.e.\ mass is still preserved along the interpolation.

The tuning of the parameter $\alpha$ allows our method to be robust to noisy mesh inputs, as shown in Figure \ref{figure_noisy}. Noisy meshes have more local variation in curvature, leading to a higher tendency for congested trajectories, but this can be tamed via greater regularization.

\review{Recall that the dynamical formulation of optimal transport can be interpreted as the least-action principle for a pressureless gas. The effect of the penalization of congested densities can be seen, from the modeling point of view, as adding a pressure force: the trajectories of the moving particles are no longer geodesics, they are bent by the pressure forces. 
The congestion term can also be see as an instance of variational mean field games, for which the augmented Lagrangian approach has been applied for flat spaces with grid discretization \cite{Benamou2017}.}

\review{Rather than a drawback, we see the regularization as an added feature of our method. Without regularization, one has a faithful discrete Benamou--Brenier formula on discrete surfaces with Riemannian structure. For applications in physics or gradient flows (Subsection \ref{subsection_gradient_flows}), this is likely the preferable formulation. For graphics, where blurriness might be sharpened \emph{a posteriori}, penalizing concentration of mass is reasonable. Either can be achieved thanks to our regularization term without additional computational cost and only by adding a few lines of code.}

\begin{figure*}
\begin{center}
\begin{tabular}{cccccc}
	\includegraphics[width=.14\textwidth]{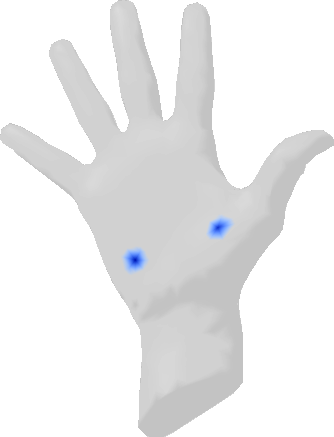} &
	\includegraphics[width=.14\textwidth]{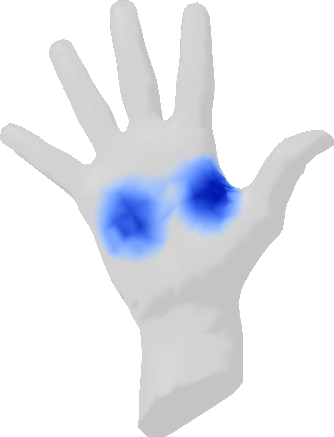} &
	\includegraphics[width=.14\textwidth]{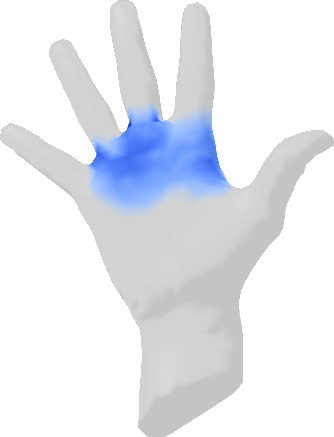} &
	\includegraphics[width=.14\textwidth]{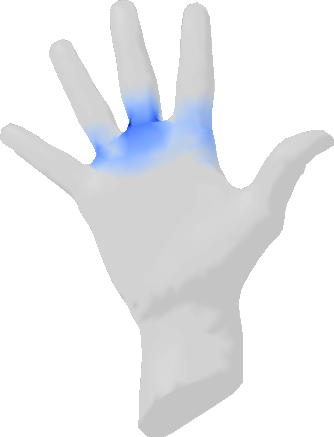} &
	\includegraphics[width=.14\textwidth]{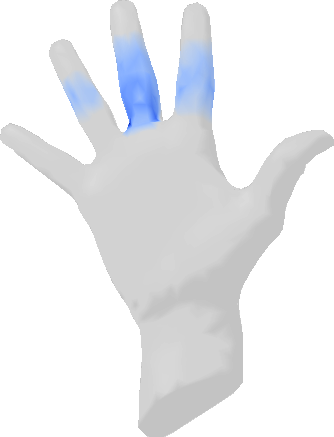} &
	\includegraphics[width=.14\textwidth]{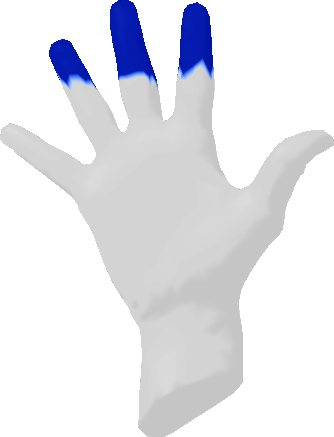} \\

	\includegraphics[width=.14\textwidth]{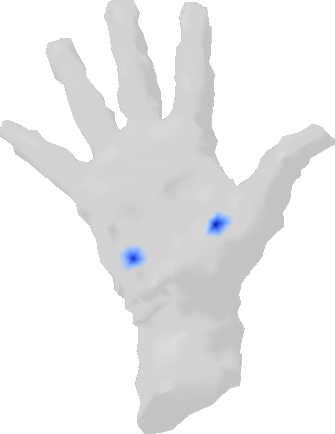} &
	\includegraphics[width=.14\textwidth]{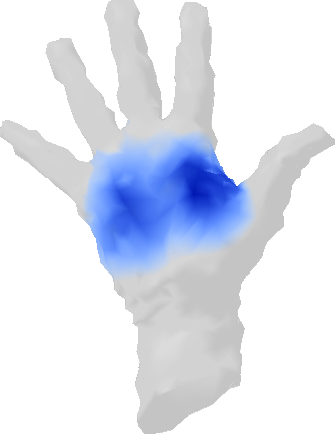} &
	\includegraphics[width=.14\textwidth]{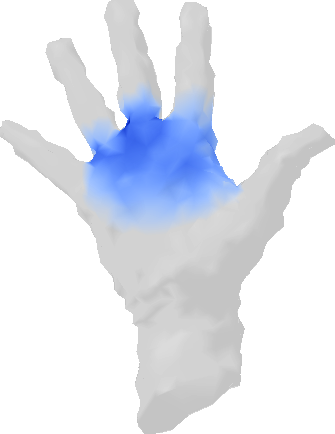} &
	\includegraphics[width=.14\textwidth]{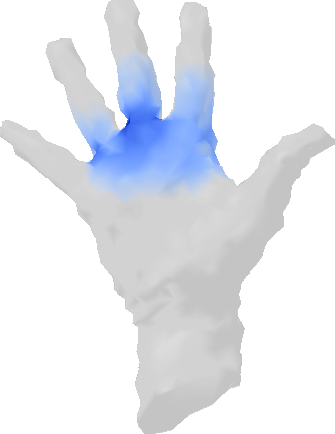} &
	\includegraphics[width=.14\textwidth]{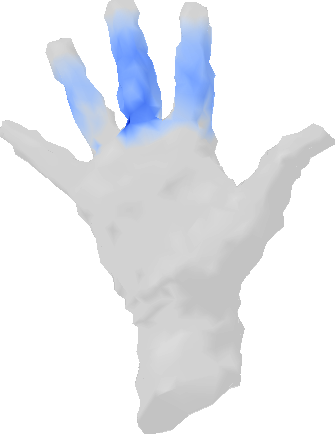} &
	\includegraphics[width=.14\textwidth]{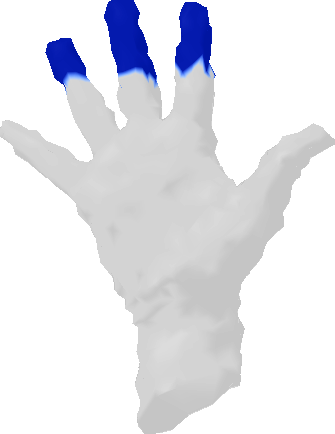} \\

	\includegraphics[width=.14\textwidth]{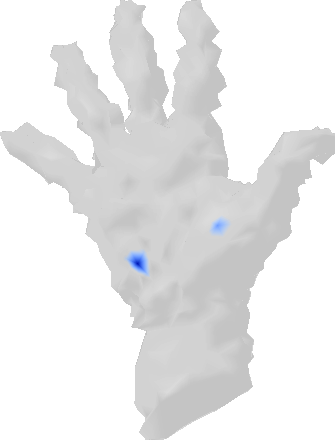} &
	\includegraphics[width=.14\textwidth]{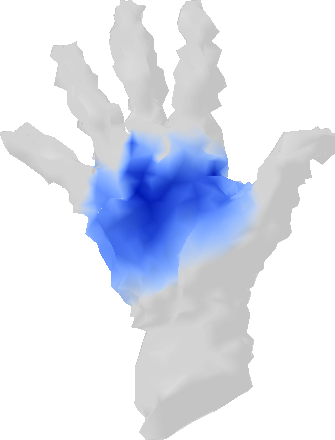} &
	\includegraphics[width=.14\textwidth]{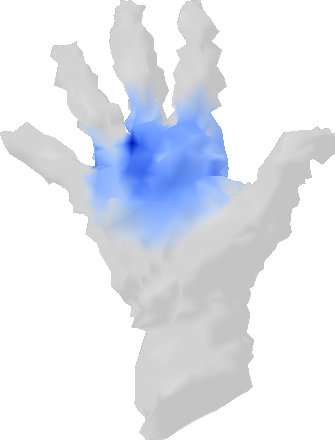} &
	\includegraphics[width=.14\textwidth]{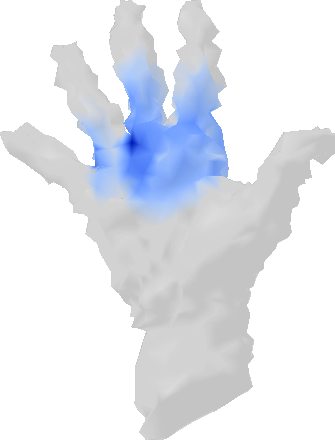} &
	\includegraphics[width=.14\textwidth]{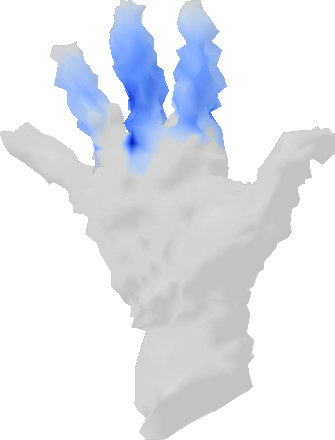} &
	\includegraphics[width=.14\textwidth]{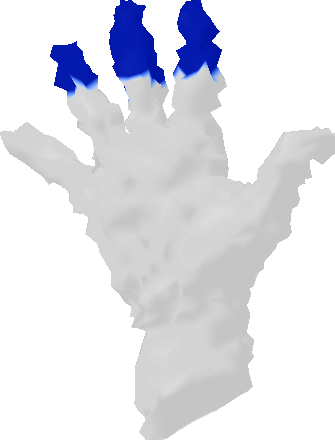} \\

	\hline

    $t=0$ & $t=\nicefrac{1}{5}$ & $t = \nicefrac{2}{5}$ & $t=\nicefrac{3}{5}$ & $t=\nicefrac{4}{5}$ & $t=1$
\end{tabular}
\end{center}
\caption{Robustness to noisy meshes, after adjusting the parameter $\alpha$. Top row: original mesh, $\alpha = 0.02$; middle row: noisy mesh, $\alpha = 0.1$; bottom row: very noisy mesh, $\alpha = 0.2$. The bounding boxes of the meshes were of side length \textasciitilde 1.5. Noisy mesh vertices were obtained by uniformly random perturbation, in the normal direction, of magnitudes up to 0.02 and 0.04, for the middle and bottom row, respectively.}
\label{figure_noisy}
\end{figure*}

\subsection{Intrinsic geometry}

\begin{figure}
\begin{center}
\begin{tabular}{ccc}

\includegraphics[width=.14\textwidth]{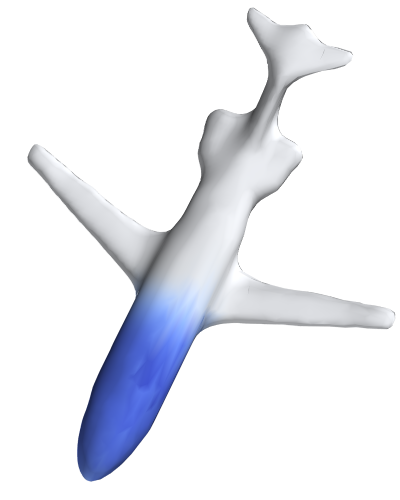} &
\includegraphics[width=.14\textwidth]{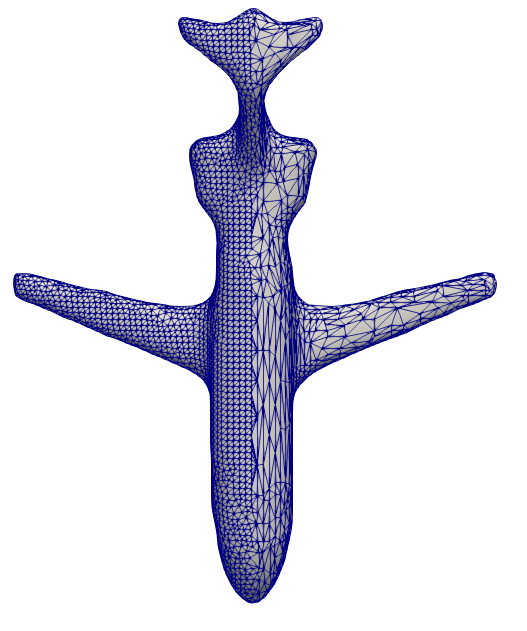} &
\includegraphics[width=.14\textwidth]{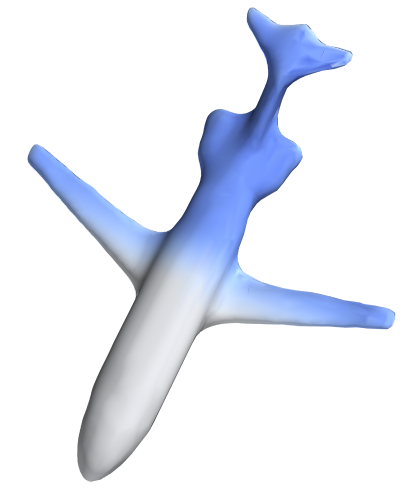} \\

\hline

$t=0$ & mesh & $t=1$ \\

\includegraphics[width=.14\textwidth]{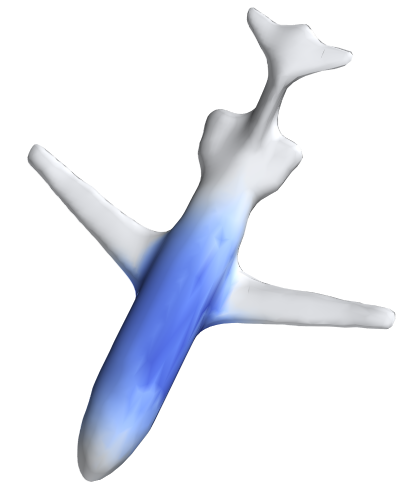} &
\includegraphics[width=.14\textwidth]{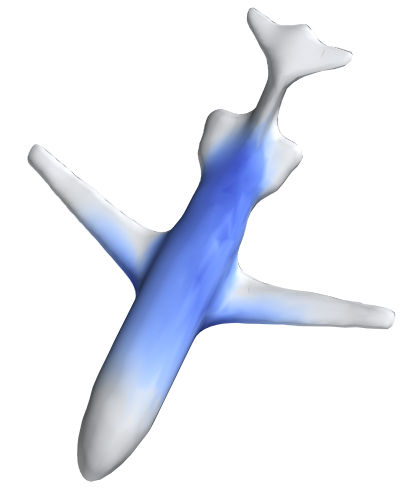} &
\includegraphics[width=.14\textwidth]{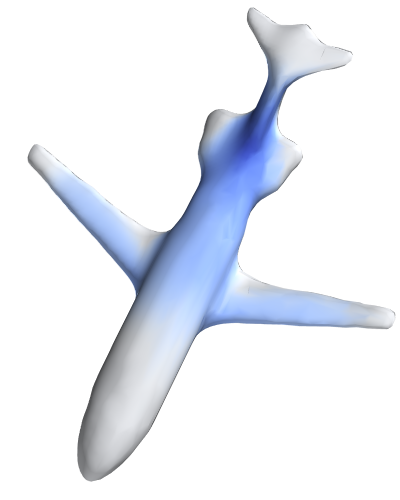} \\

\hline 

$t=\nicefrac{1}{4}$ & $t=\nicefrac{1}{2}$ & $t=\nicefrac{3}{4}$

\end{tabular}
\end{center}
\caption{Top row: mesh and initial/final probability distributions. Notice the difference of coarseness in the mesh. Bottom row: interpolation shown at different times where no effect of the difference in coarseness is seen. We have used the regularization described in Subsection \ref{subsection_regularization} with $\alpha = 0.1$.}
\label{figure_airplane_coarse_fine}

\end{figure}

%
%
%
%
%

To illustrate the fact that the discrete Wasserstein metric is really associated to the geometric structure of the mesh, we perform the following experiment. We design a mesh where the right part is much coarser than the left one, and we let the density evolve. As one can see in Figure \ref{figure_airplane_coarse_fine}, 
the jump in coarseness does not affect the density and does not produce any numerical artifact.

\subsection{Arbitrary topologies}

\begin{figure}
\begin{tabular}{cccc}
	\includegraphics[width=.095\textwidth]{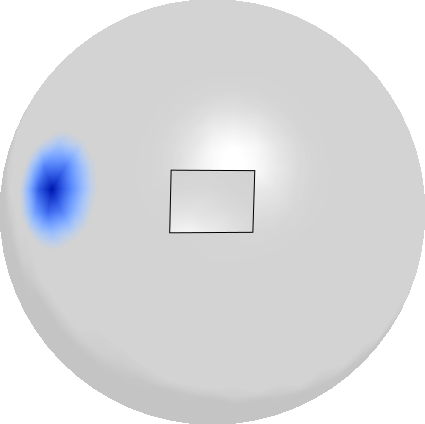} &
	\includegraphics[width=.095\textwidth]{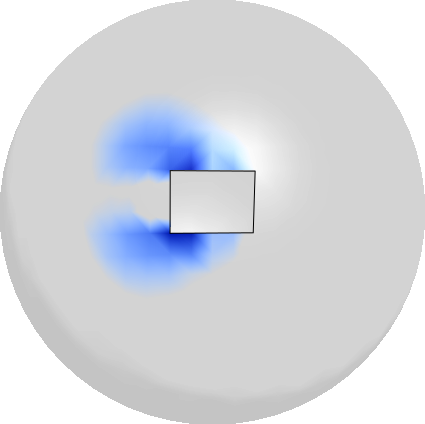} &
	\includegraphics[width=.095\textwidth]{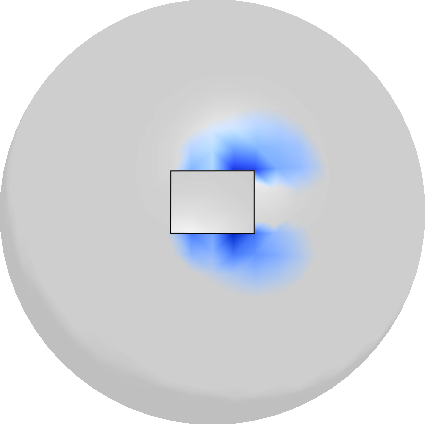} &
	\includegraphics[width=.095\textwidth]{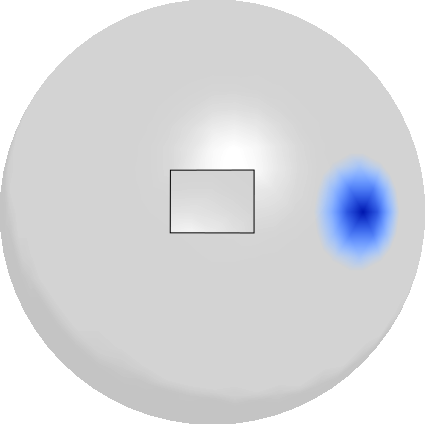} \\

	\includegraphics[width=.095\textwidth]{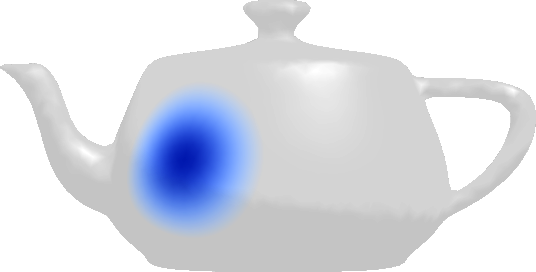} &
	\includegraphics[width=.095\textwidth]{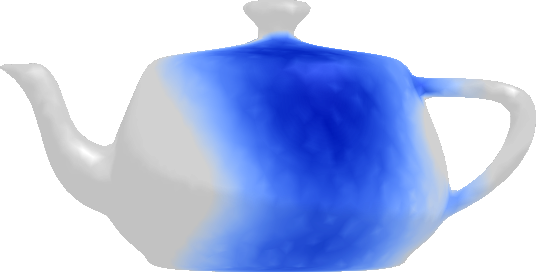} &
	\includegraphics[width=.095\textwidth]{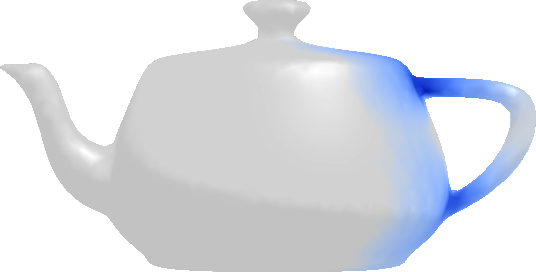} &
	\includegraphics[width=.095\textwidth]{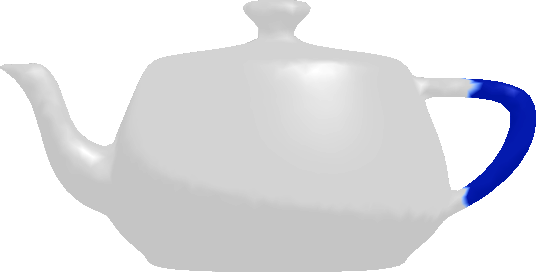} \\

	\hline

    $t=0$ & $t=\nicefrac{1}{3}$ & $t = \nicefrac{2}{3}$ & $t=1$
\end{tabular}
\caption{Our formulation easily handles non-spherical topologies. In the top row is a punctured sphere, and in the bottom row is a genus-1 teapot mesh. These interpolations were generated with $\alpha = 0.02$ and $\alpha = 0.2$, respectively.}
\label{figure_topology}
\end{figure}

The discrete formulation that we have chosen applies \review{without change} 
to meshes with boundary and those of non-spherical topology. This is illustrated in Figure \ref{figure_topology} with two meshes topologically equivalent to a disc and a torus. 

In the first example, the interpolating distribution stays near the boundary, approximately following the geodesic between the means of the endpoint distributions.  In the second example, one can see the initial distribution splitting to travel both ways to the other side of a handle, before merging again to achieve the final distribution. 


\subsection{Comparison to convolutional method}

Solomon et al.~\shortcite{Solomon2015} provide a convolutional method for approximating the Wasserstein geodesic between two distributions supported on triangle meshes. Their approach solves a regularized optimal transport barycenter problem using a modified Sinkhorn algorithm, with a heat kernel taking the place of explicitly-calculated pairwise distances between vertices. As a result, their method blurs the input distributions, and the interpolated distributions are typically of higher entropy than the endpoints. This is combated with a nonconvex projection method that attempts to lower the entropy of intermediate distributions to an approximated bound.

In comparing our methods, we found that \cite{Solomon2015} also tends to produce interpolating distributions that do not travel with constant speed. This effect can be seen in Figures \ref{figure_starzip} and \ref{figure_horsezip}, where their interpolating distributions remain mostly stationary for times near $t=0$ and $t=1$, but move \review{with} high speed for times near $t = \nicefrac{1}{2}$. Loosening the entropy bound in the nonconvex step \review{helps somewhat,} but the problem persists regardless. Most likely this effect is due to the fact that the entropy reduction step of their algorithm is not geometry-aware but rather simply sharpens the regularized interpolant.

Our method does not suffer from this issue, and the spread of our interpolating distributions is comparable or better in both cases. Furthermore, unless the regularizer $\alpha$ is large, our interpolating distributions tend to diffuse only in the direction of the geodesics along which particles are traveling, which better mimics the behavior of Wasserstein geodesics; this diffusion is reduced by adding more time steps to our interpolation problem.


Our formulation also has comparable runtimes to the convolutional method of Solomon et al.~\shortcite{Solomon2015}. For instance, the implementation of the convolutional method provided by the authors of that paper took 57 and 141 seconds to converge, on the punctured sphere (1020 vertices) and teapot (3900 vertices), respectively, for 13 time steps. This is to be put in comparision with the timings provided in Subsection \ref{subsection_cvx}.





The comparisons in this section were computed on a 3.60GHz Intel i7-7700 processor with 32GB of RAM. For the convolutional method, the heat kernel was used to diffuse to $t=0.0015$ with 10 implicit Euler steps.

\begin{figure*}
\begin{tabular}{cccccccc}

	\includegraphics[width=.095\textwidth]{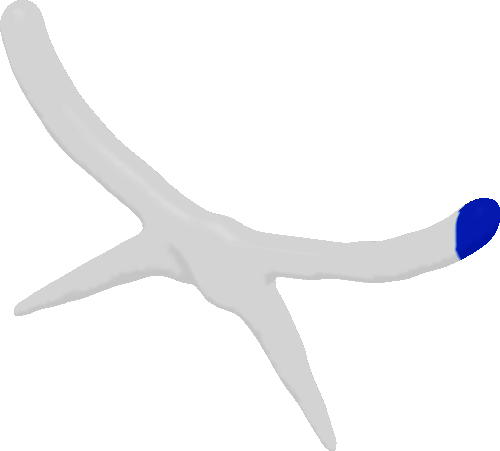} &
	\includegraphics[width=.095\textwidth]{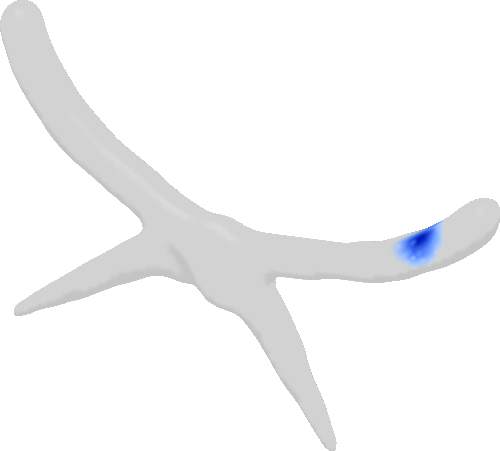} &
	\includegraphics[width=.095\textwidth]{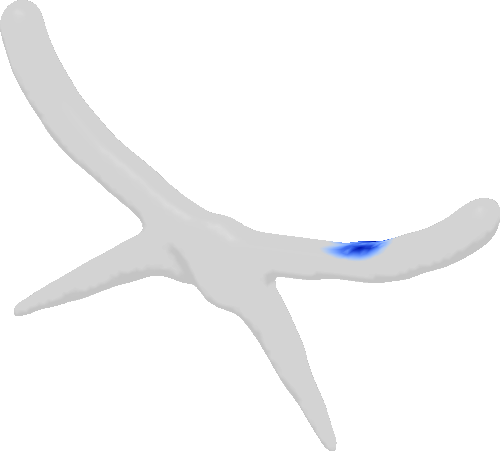} &
	\includegraphics[width=.095\textwidth]{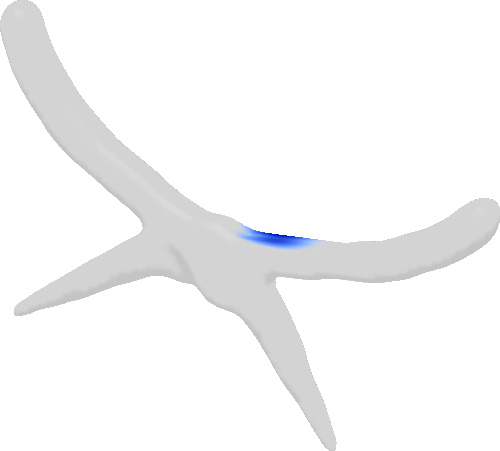} &
	\includegraphics[width=.095\textwidth]{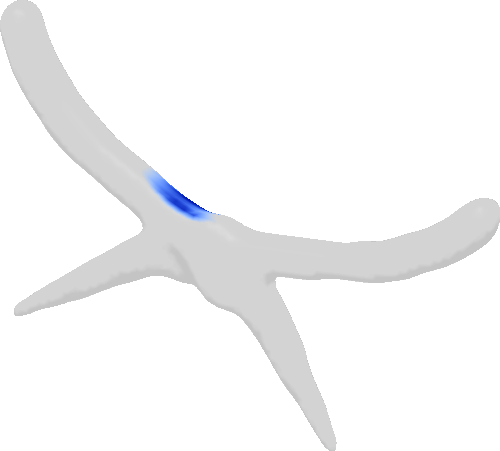} &
	\includegraphics[width=.095\textwidth]{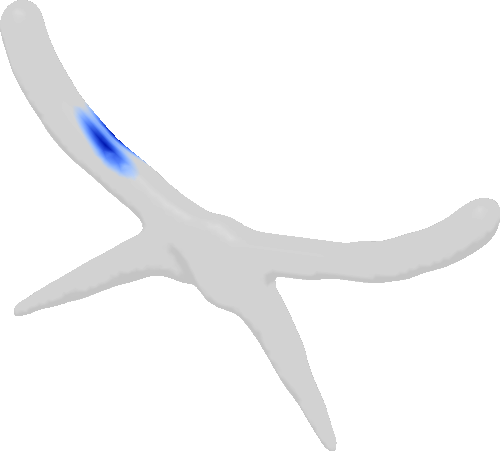} &
	\includegraphics[width=.095\textwidth]{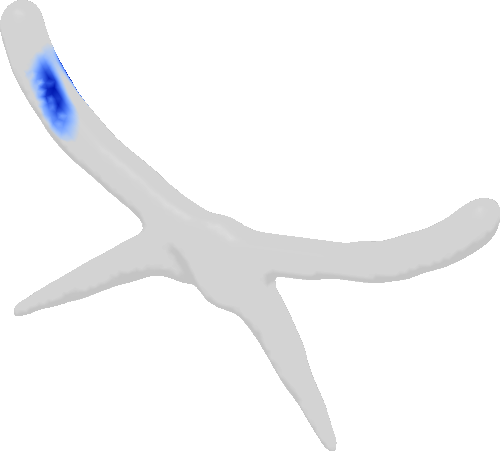} &
	\includegraphics[width=.095\textwidth]{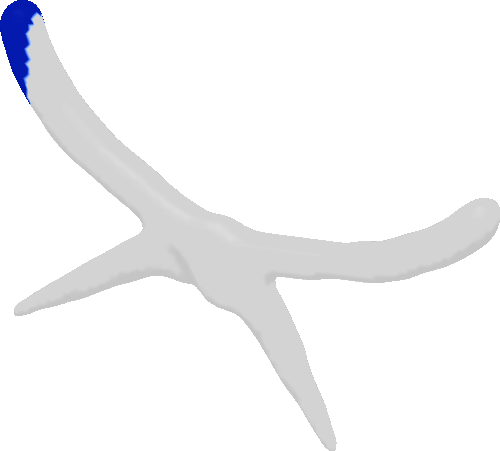} \\

	\includegraphics[width=.095\textwidth]{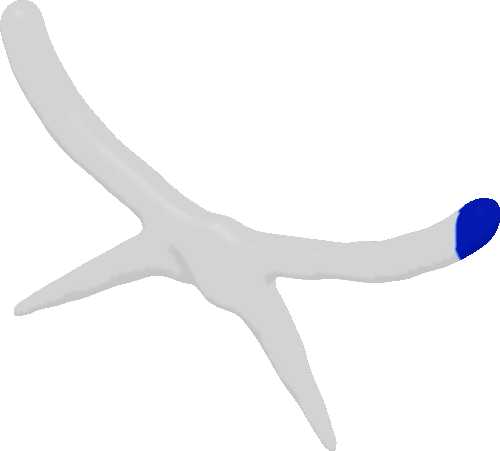} &
	\includegraphics[width=.095\textwidth]{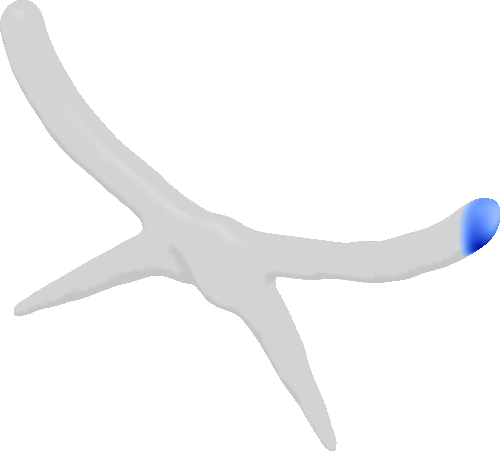} &
	\includegraphics[width=.095\textwidth]{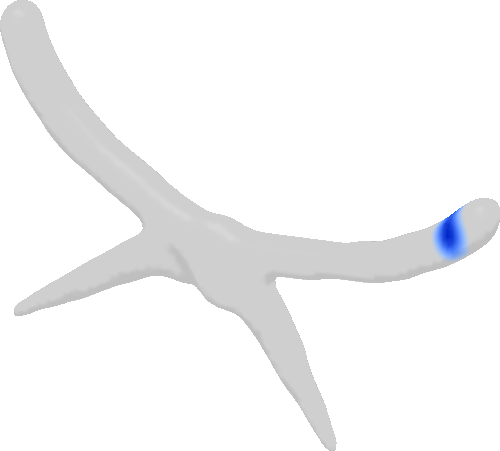} &
	\includegraphics[width=.095\textwidth]{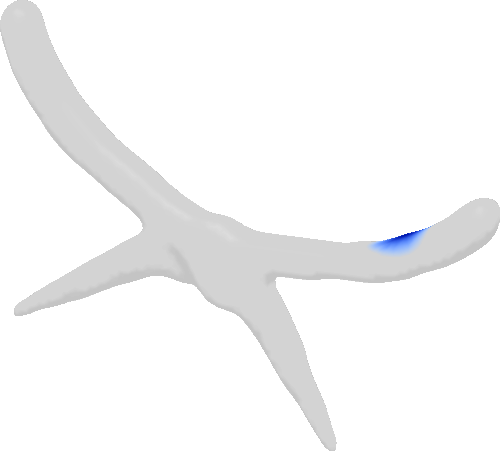} &
	\includegraphics[width=.095\textwidth]{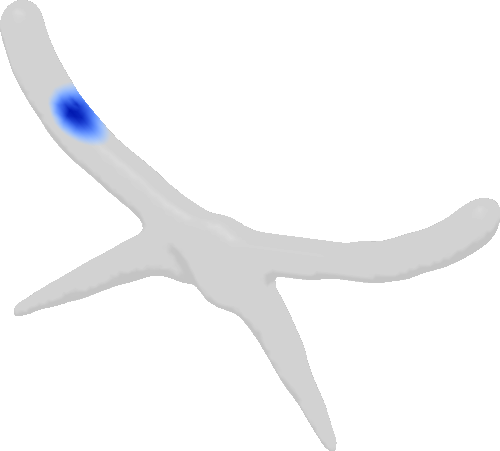} &
	\includegraphics[width=.095\textwidth]{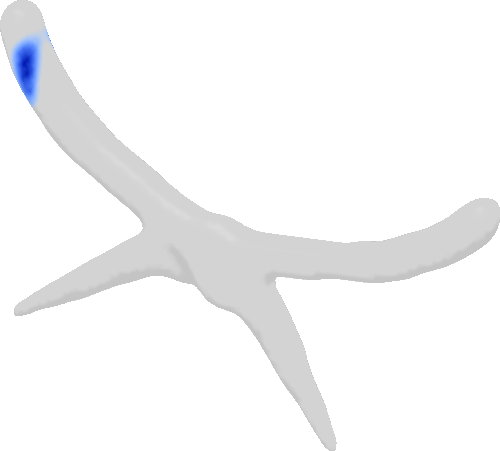} &
	\includegraphics[width=.095\textwidth]{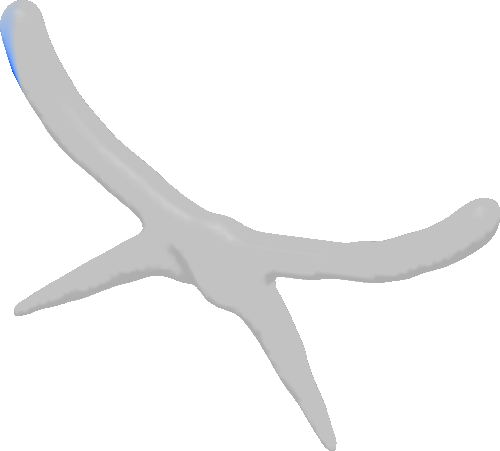} &
	\includegraphics[width=.095\textwidth]{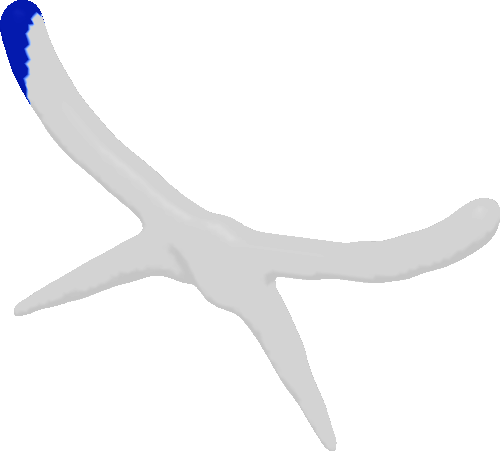} \\

	\includegraphics[width=.095\textwidth]{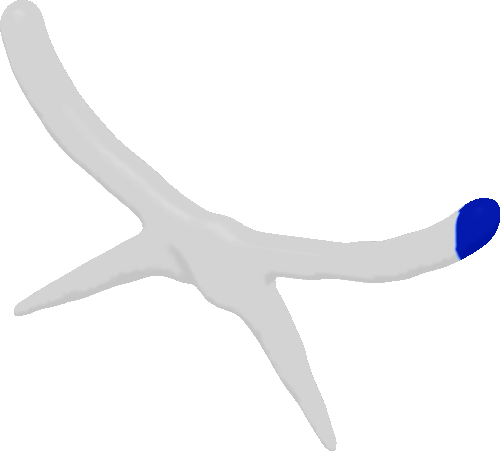} &
	\includegraphics[width=.095\textwidth]{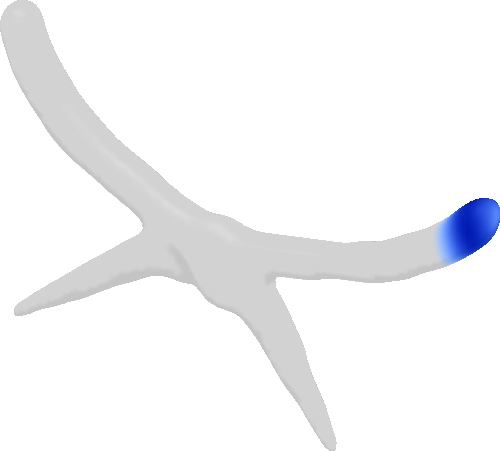} &
	\includegraphics[width=.095\textwidth]{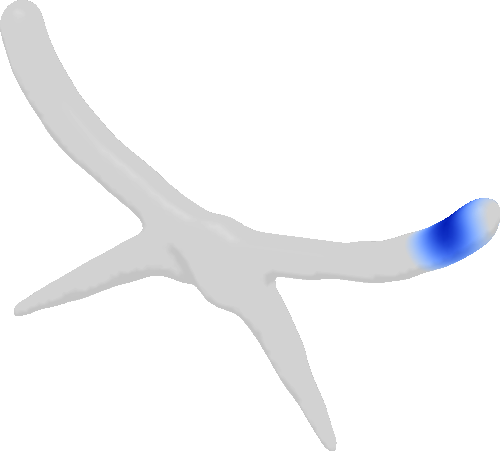} &
	\includegraphics[width=.095\textwidth]{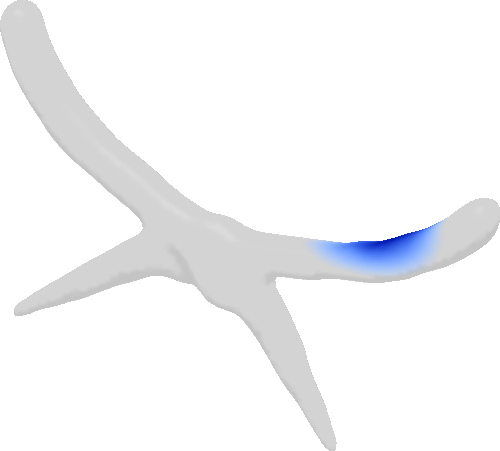} &
	\includegraphics[width=.095\textwidth]{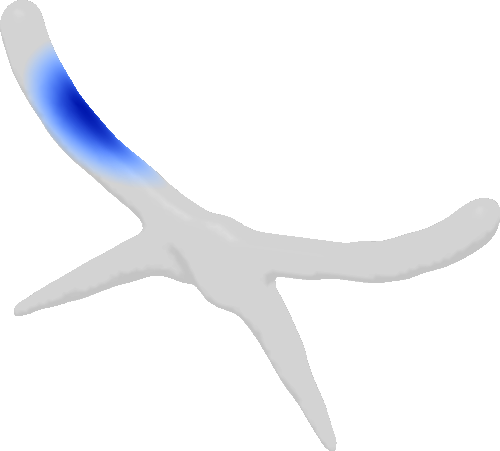} &
	\includegraphics[width=.095\textwidth]{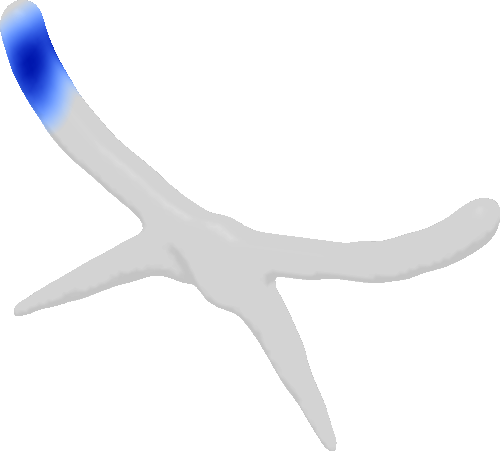} &
	\includegraphics[width=.095\textwidth]{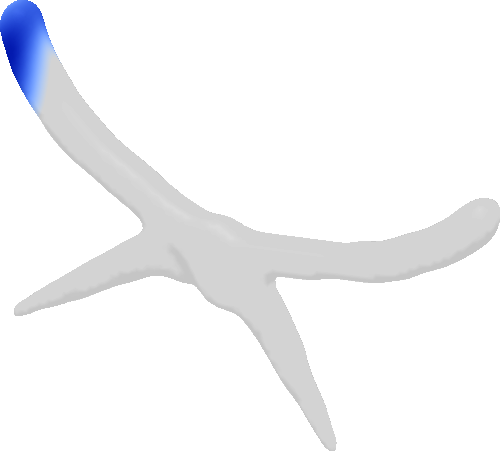} &
	\includegraphics[width=.095\textwidth]{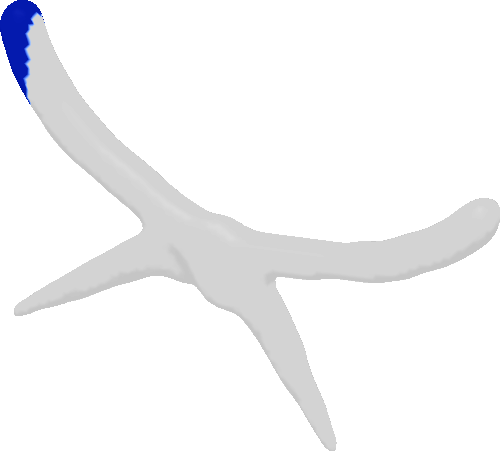} \\
	
	\hline 
	
	$t=0$ & $t =\nicefrac{1}{7}$ & $t =\nicefrac{2}{7}$ & $t =\nicefrac{3}{7}$ & $t =\nicefrac{4}{7}$ & $t =\nicefrac{5}{7}$ & $t =\nicefrac{6}{7}$ & $t=1$	
\end{tabular}
\caption{Constant-speed interpolation. Indicator distributions on handle ends of a pliers mesh are interpolated. Top row: our method, calculated with $\alpha = 0.001$; middle row: method of Solomon et al.~\protect\shortcite{Solomon2015}, calculated with entropy bounded by that of the endpoint distributions; bottom row: method of Solomon et al.~\protect\shortcite{Solomon2015}, calculated with no entropy bound. As can be seen, the method of Solomon et al.~\protect\shortcite{Solomon2015} stays mostly stationary except for the middle frames.}
\label{figure_starzip}
\end{figure*}

\begin{figure*}
\begin{tabular}{cccccccc}
	\includegraphics[width=.095\textwidth]{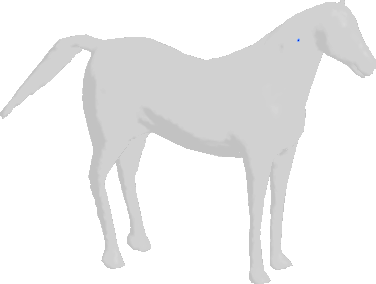} &
	\includegraphics[width=.095\textwidth]{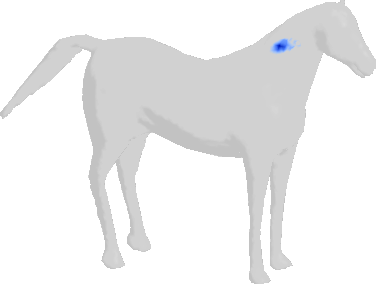} &
	\includegraphics[width=.095\textwidth]{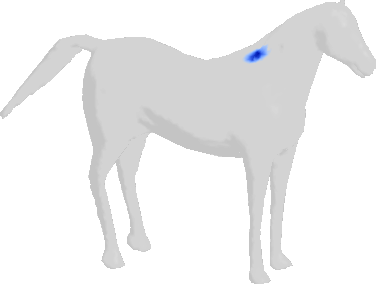} &
	\includegraphics[width=.095\textwidth]{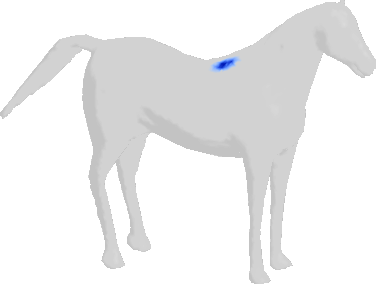} &
	\includegraphics[width=.095\textwidth]{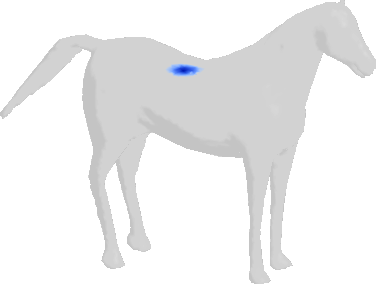} &
	\includegraphics[width=.095\textwidth]{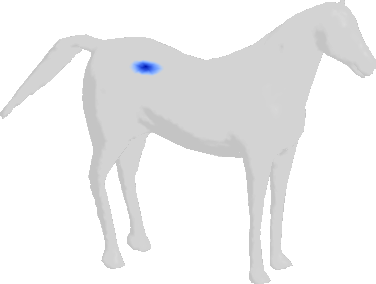} &
	\includegraphics[width=.095\textwidth]{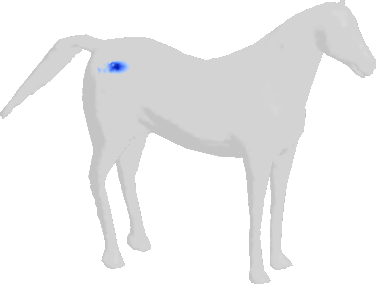} &
	\includegraphics[width=.095\textwidth]{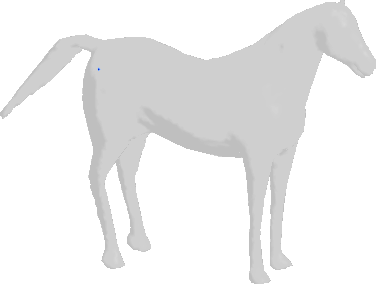} \\

	\includegraphics[width=.095\textwidth]{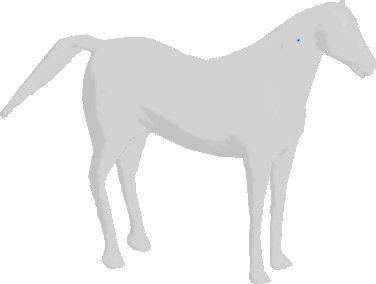} &
	\includegraphics[width=.095\textwidth]{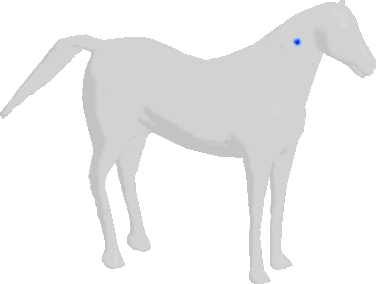} &
	\includegraphics[width=.095\textwidth]{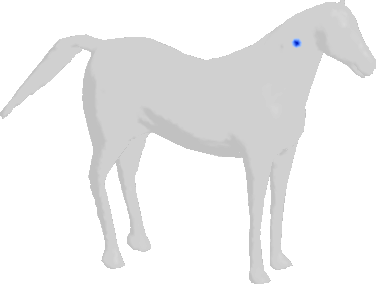} &
	\includegraphics[width=.095\textwidth]{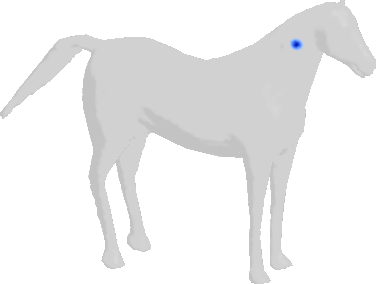} &
	\includegraphics[width=.095\textwidth]{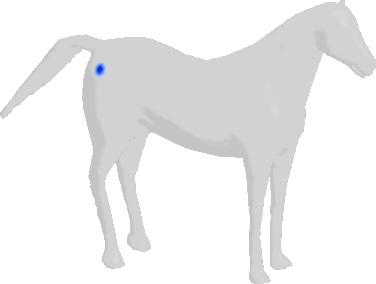} &
	\includegraphics[width=.095\textwidth]{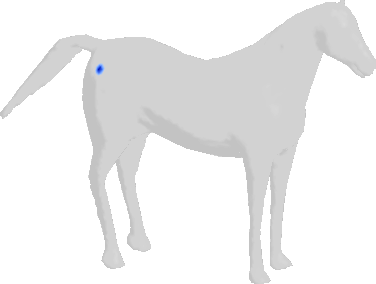} &
	\includegraphics[width=.095\textwidth]{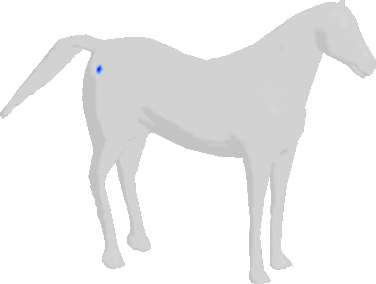} &
	\includegraphics[width=.095\textwidth]{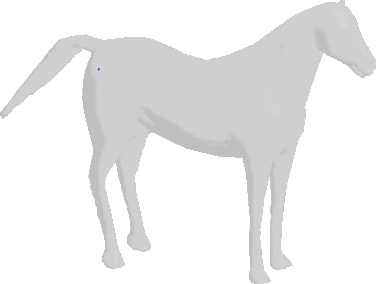} \\

	\includegraphics[width=.095\textwidth]{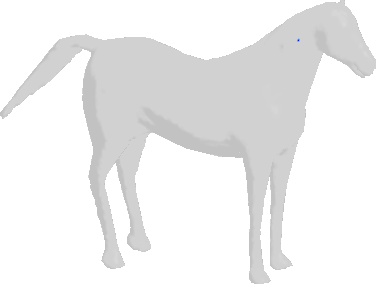} &
	\includegraphics[width=.095\textwidth]{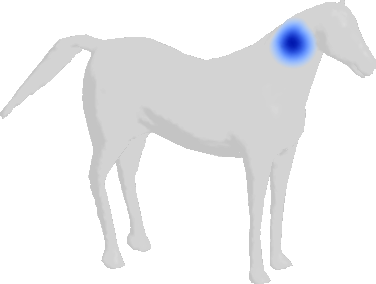} &
	\includegraphics[width=.095\textwidth]{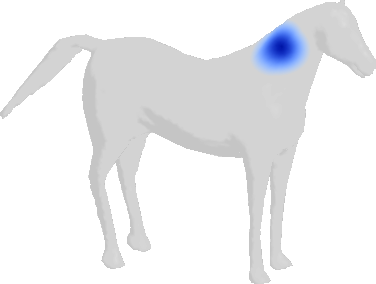} &
	\includegraphics[width=.095\textwidth]{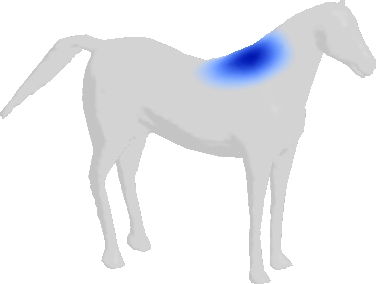} &
	\includegraphics[width=.095\textwidth]{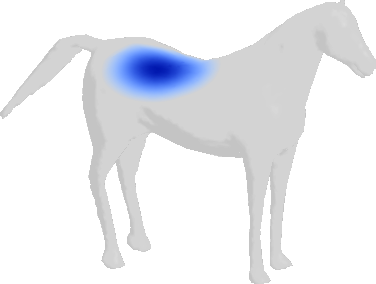} &
	\includegraphics[width=.095\textwidth]{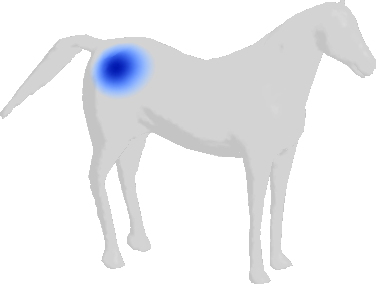} &
	\includegraphics[width=.095\textwidth]{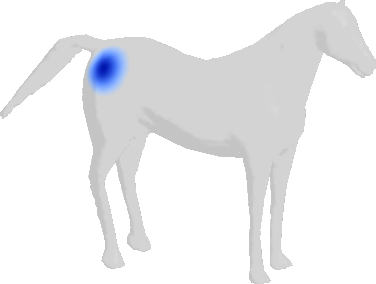} &
	\includegraphics[width=.095\textwidth]{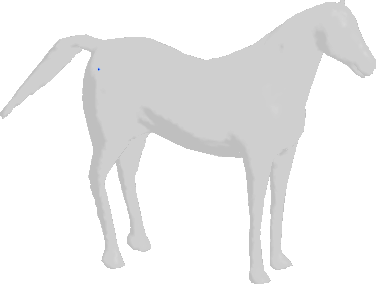} \\
	
	\hline 
	
	$t=0$ & $t =\nicefrac{1}{7}$ & $t =\nicefrac{2}{7}$ & $t =\nicefrac{3}{7}$ & $t =\nicefrac{4}{7}$ & $t =\nicefrac{5}{7}$ & $t =\nicefrac{6}{7}$ & $t=1$
\end{tabular}
\caption{Constant-speed interpolation. Delta distributions on a horse mesh are interpolated. Top row: our method, calculated with $\alpha = 0.01$; middle row: method of Solomon et al.~\protect\shortcite{Solomon2015}, calculated with entropy bounded by that of endpoint distributions; bottom row: method of Solomon et al.~\protect\shortcite{Solomon2015}, calculated with no entropy bound. For the middle row, the motion is even more concentrated in the middle frames. As seen in the bottom row, exclusion of the entropy bound helps somewhat, but the result still is mostly stationary, save for the middle frames.}
\label{figure_horsezip}
\end{figure*}

\section{Applications and extensions}

\subsection{Harmonic mappings}

\begin{figure}
\begin{center}
\begin{tikzpicture}[scale = 1]



\node[inner sep=0pt] at (-2.5,-3.5) {\includegraphics[width=.18\textwidth]{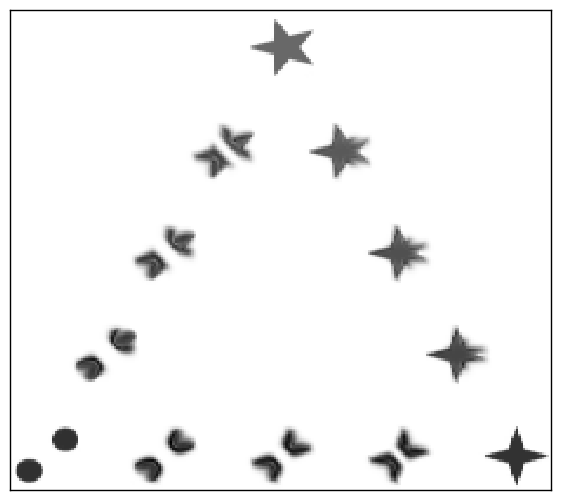}};

\draw (-2.5,-5.25) node{(a)} ;

\node[inner sep=0pt] at (2.5,-3.5) {\includegraphics[width=.18\textwidth]{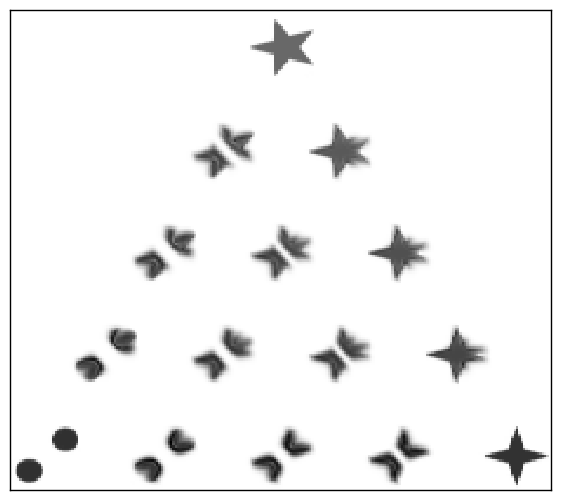}};

\draw (2.5,-5.25) node{(b)} ;

\draw[->, thick = 2pt] (-0.7,-3.5) -- (0.7,-3.5) ;
\draw (0,-3.4) node[above]{Harmonic};
\draw (0,-3.5) node[below]{interpolation};

\end{tikzpicture}
\caption{(a) Data: boundary conditions, i.e. value of the probability distributions at the boundary vertexes of a regular triangulation of an equilateral triangle. (b) Result: interpolation over the interior of the triangular mesh obtained by minimizing the Dirichlet energy with the boundary constraints. The mesh of the source domain $\Omega$ is a regular triangulation of a triangle with $15$ vertexes per side but for clarity reasons we display the value of the probability distributions only a subset of the set of vertexes. The target domain $\M$ is a flat square triangulated with $30$ vertexes by side.}
\label{figure_harmonic}
\end{center}
\end{figure}

As $\P(\M)$ can be viewed as a Riemannian manifold of infinite dimension, one can consider not only geodesics valued in this space, but also harmonic mappings. That is, we consider a domain $\Omega$ and a function $\mu : x \in \Omega \to \mu^x$ which takes fixed values on $\dr \Omega$ the boundary of $\Omega$ \review{and minimizes} the Dirichlet energy
\begin{equation}
\label{equation_Dirichlet_energy_continuous}
\mathrm{Dir}(\mu) \eqdef \frac{1}{2} \int_\Omega \| \nabla_\Omega \mu^x \|^2_{T_{\mu^x}\P(\M)} \dint x,
\end{equation}
where the norm $\| \ \|_{\mu^x}$ is defined in \eqref{equation_norm_continuous}. Such harmonic mappings have been introduced under the name \emph{soft maps} by \review{Solomon et al.~\shortcite{solomon2012soft,Solomon2013}} for the purpose of surface mapping; one can also find a formal definition and theoretical analysis in \cite{Lavenant2017,Lu2017}.

As explained by Lavenant~\shortcite{Lavenant2017}, if some boundary conditions $\bar{\mu} : \dr \Omega \to \P(\M)$ are given, the Dirichlet problem consists in minimizing the Dirichlet energy \eqref{equation_Dirichlet_energy_continuous} of $\mu$ under the constraint that $\mu = \bar{\mu}$ on $\dr \Omega$.  Specifically, it is a convex problem whose dual reads
\begin{equation}
\label{equation_harmonic_dual_continuous}
\begin{cases}
\max_\varphi & \int_{\dr \Omega} \left( \int_\M \varphi(x,\cdot) \cdot n_\Omega(x) \dint \bar{\mu}^x \right) \dint x \\
& \text{s.t. }
\nabla_\Omega \cdot \varphi + \frac{1}{2} \| \nabla_\M \varphi \|^2 \leqslant 0 \text{ on } \Omega \times \M,
\end{cases}
\end{equation}
where 
$\varphi$ is defined on $\Omega \times M$ and valued in $T \Omega$ (i.e.\ for a point $(x,y) \in \Omega \times M$, one has $\varphi(x,y) \in T_x \Omega$), $n_\Omega(x)$ is the outward normal to $\Omega$, and $\dint x$ is the integration on $\dr \Omega$ w.r.t.\ the surface measure. In the case where $\Omega$ is a segment, the dual problem~\eqref{equation_BB_continuous_dual} for the geodesics is recovered.

To discretize \eqref{equation_harmonic_dual_continuous}, we use the same strategy as for the geodesic problem. We assume that \review{we have} $S_\Omega = (V_\Omega,E_\Omega,T_\Omega)$ a triangulation of the surface $\Omega$. The discrete unknown $\varphi$ maps every element of $T_\Omega \times V$ onto a vector in $\R^3$ (thought as the tangent space of $S_\Omega$). The divergence $\nabla_\Omega \cdot \varphi$ is replaced by its discrete counterpart which lives on $V_\Omega \times V$. On the other hand, $\nabla_\M \varphi$ is naturally seen as a vector in $\R^3$ for each pair of triangles in $T_\Omega \times T$. We apply the same idea: For the term $\| \nabla_\M \varphi \|^2$, first square and then average (weighting by the area of the triangles) to put it on the grid $V_\Omega \times V$. 
Once we have a fully-discrete problem, we build an augmented Lagragian and use ADMM: The solution $\mu$ is the Lagrange multiplier associated to the constraint $A = \nabla_\Omega \cdot \varphi$.

In Figure \ref{figure_harmonic}, we show an example where $S_\Omega$ is a triangulation of an equilateral triangle and $S$ the triangulation of a flat square. On the corners of the triangle we put some distributions, and on the side, as part of the boundary data, we have chosen to put the geodesic in the Wasserstein space between the distributions on the corners. This choice is arbitrary, we could have chosen other configurations on the edges of the boundary of the triangle $\Omega$. This picture resembles the barycentric interpolation \cite{cuturi2014fast,benamou2015iterative,Solomon2015}, though no theoretical evidence indicates that harmonic and barycentric interpolation coincide.

%

\subsection{Gradient flows in the Wasserstein space}
\label{subsection_gradient_flows}

\begin{figure*}
\begin{center}

\begin{tabular}{cccccccc}

\includegraphics[width=.05\textwidth]{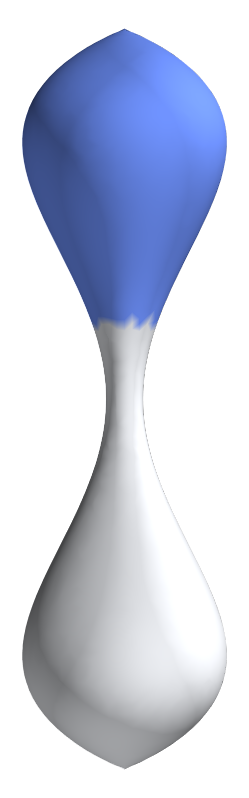} &
\includegraphics[width=.05\textwidth]{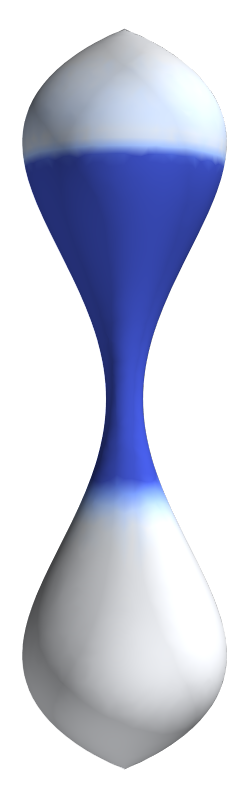} &
\includegraphics[width=.05\textwidth]{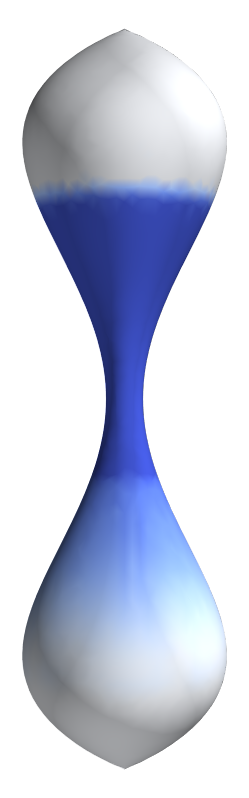} &
\includegraphics[width=.05\textwidth]{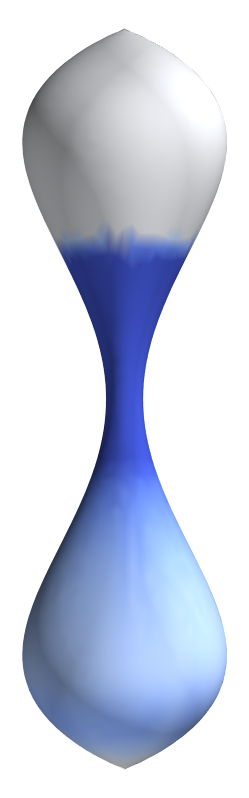} &
\includegraphics[width=.05\textwidth]{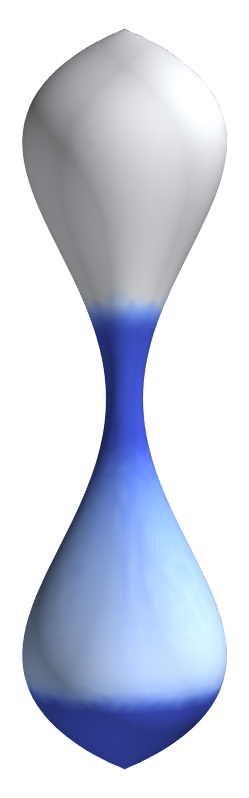} &
\includegraphics[width=.05\textwidth]{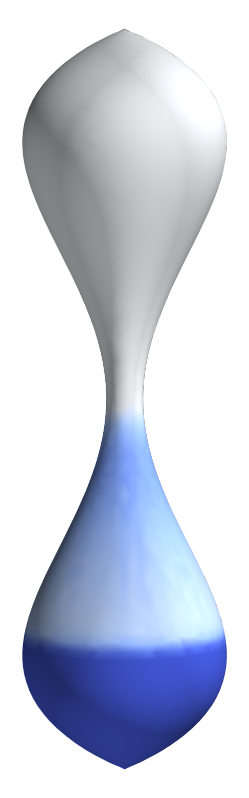} &
\includegraphics[width=.05\textwidth]{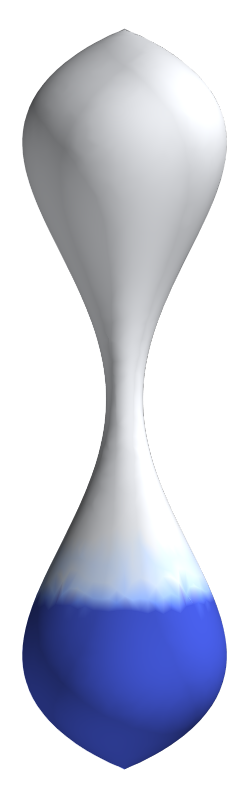} &
\includegraphics[width=.05\textwidth]{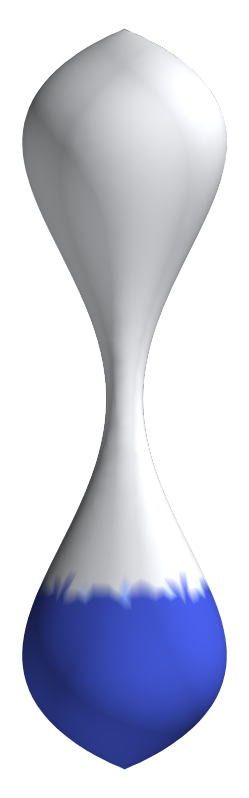} \\ 

\includegraphics[width=.1\textwidth]{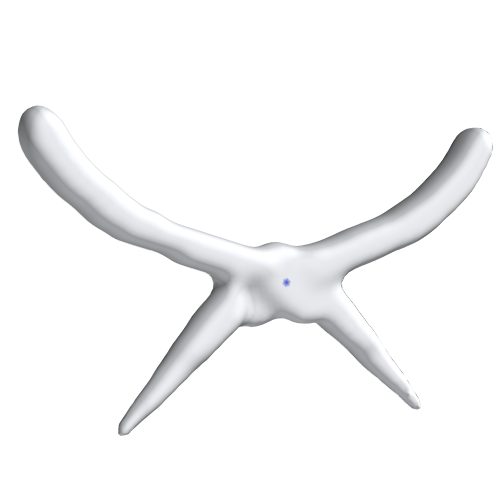} &
\includegraphics[width=.1\textwidth]{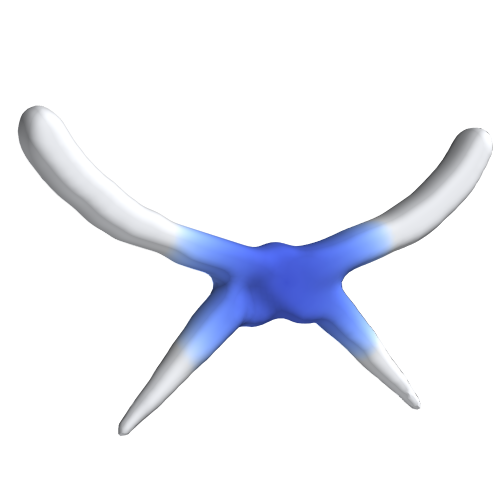} &
\includegraphics[width=.1\textwidth]{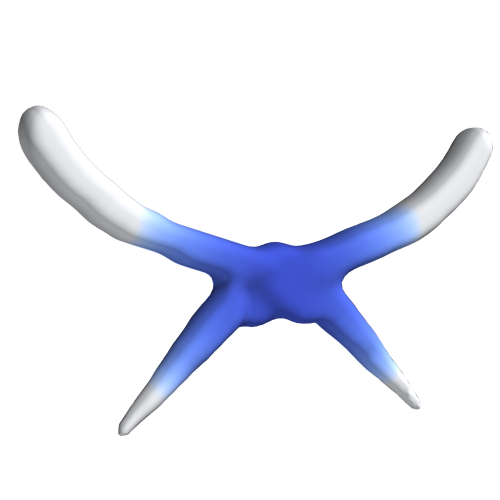} &
\includegraphics[width=.1\textwidth]{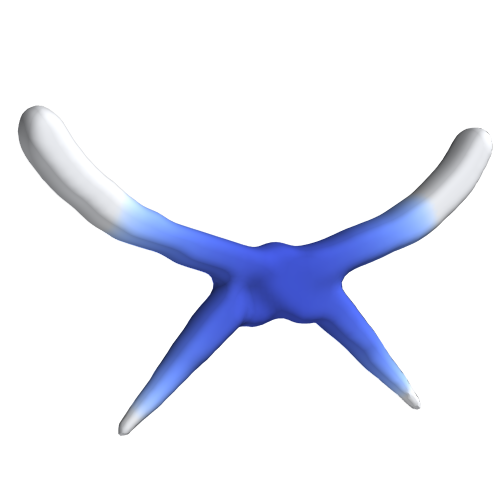} &
\includegraphics[width=.1\textwidth]{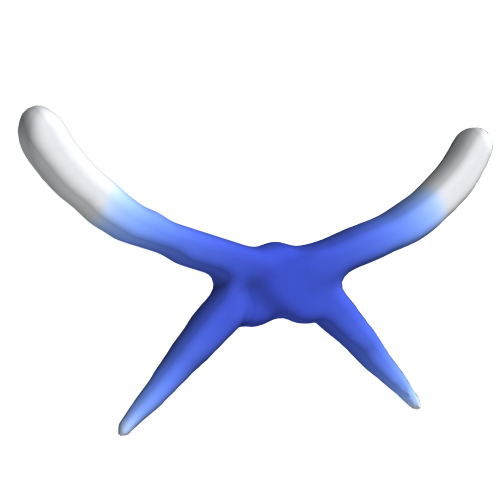} &
\includegraphics[width=.1\textwidth]{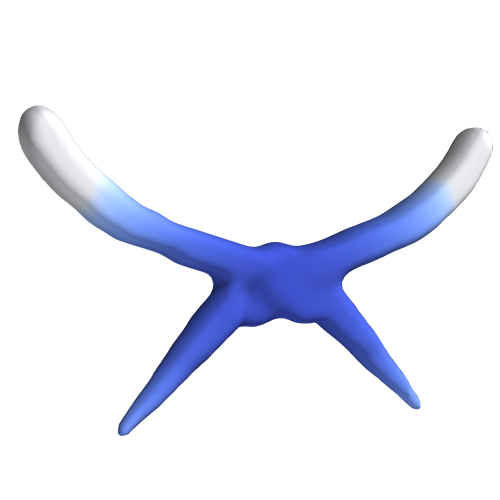} &
\includegraphics[width=.1\textwidth]{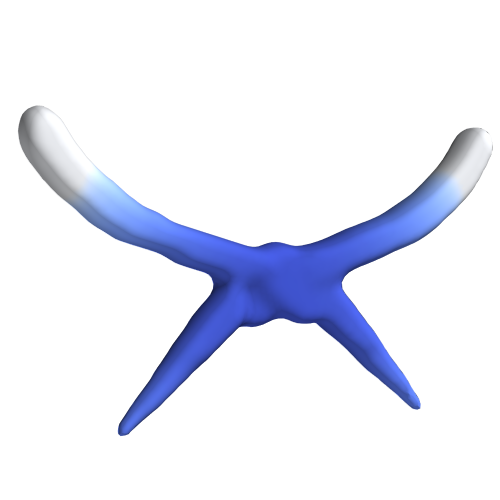} &
\includegraphics[width=.1\textwidth]{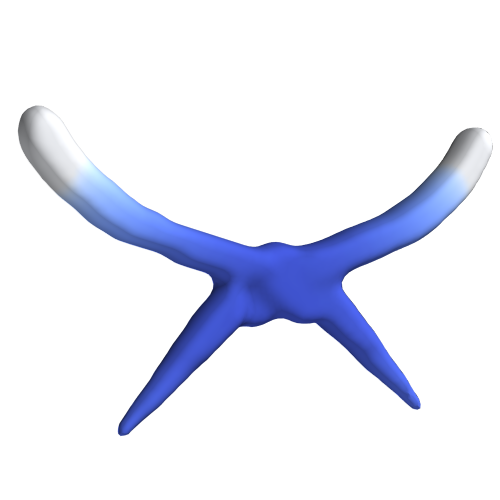} \\

\hline 

$t=0$ & $t=1$ & $t=2$ & $t=3$ & $t=4$ & $t=5$ & $t=6$ & $t=7$ 

\end{tabular}
\end{center}
\caption{First row: gradient flow in \review{discrete} Wasserstein space of the functional \eqref{equation_gradient_flow_crowd}.The potential is chosen to be $W(x,y,z) = z$: the mass \review{flows} down and then \review{saturates} because of the density constraint. \review{Congestion can be observed} as the mass goes through the thinner part. Second row: gradient flow of the internal energy \eqref{equation_gradient_flow_porous_medium} starting from a delta function on the \review{surface} of the plier. The colors do not correspond to the same scale for different times; otherwise nothing would be seen for $t>0$ \review{thanks to} the \review{peaked} distribution at $t=0$. Notice the finite speed of propagation of the density.}
\label{figure_jko}
\end{figure*}

The seminal work of Jordan et al.~\shortcite{Jordan1998}, demonstrates that by considering the gradient flow
\begin{equation}
\label{equation_gradient_flow_continuous}
\dr_t \mu^t = - \nabla_{\P(M)} F(\mu^t),
\end{equation}
where $\nabla_{\P(M)}$ is the gradient w.r.t.\ the scalar product defined by \eqref{equation_elliptic} and \eqref{equation_norm_continuous}, one can recover several well-known PDE (\review{Fokker--Planck}, porous medium equations, aggregation diffusion equations) by an appropriate choice of the functional $F : \P(M) \to \R$ \cite{Ambrosio2008, Santambrogio2015}. Moreover, a natural implicit discretization comes with this equation: The idea is to take a time step $s > 0$ and to define $\mu^{k s}$ for $k = 0,1, \ldots$ recursively in the following way:
\begin{equation}
\label{equation_JKO}
\mu^{(k+1) s} \eqdef \argmin_\mu \left[\frac{W_2^2(\mu, \mu^{k s})}{2 s} + F(\mu)\right].
\end{equation}
The scheme above, defined in arbitrary metric spaces, is refered to as \emph{minimizing movement scheme}, in the framework of optimal transport it is sometimes known as a \emph{JKO integrator}, after the work of Jordan, Kinderlehrer and Otto \shortcite{Jordan1998}.

In the case of a discrete surface $S$ with the Riemannian structure on $\P(S)$, \eqref{equation_gradient_flow_continuous} makes sense and can be written
\begin{equation}
\label{equation_gradient_flow_discrete}
\dr_t \mu^t = - (P_{\mu^t})^{-1} \nabla F (\mu^t)
\end{equation}
where $P_{\mu}$ is the metric tensor defined in \eqref{equation_metric_tensor_discrete} and $\nabla F$ is the usual gradient of $F$ as a function from $\R^{|V|}$ to $\R$. We can still define an iterative scheme to compute solutions of \eqref{equation_gradient_flow_discrete} by solving \eqref{equation_JKO}, where $W_2$ has been replaced by the discrete Wasserstein distance $W_d$. As long as $F : \P(S) \to \R$ is convex, the discrete version of \eqref{equation_JKO} can be tackled by the same augmented Lagrangian and ADMM iterations at the price of introducing an additional variable \cite{Benamou2016}.

We cannot recover cotangent Laplacian heat flow as a gradient flow for the Wasserstein distance $W_d$; to get such a result, one would need to define $\hat{\mu}$ by some nonlinear averaging process rather than \eqref{equation_definition_averaging_mu}. Such a choice would increase the complexity of computing geodesics, and it would be likely to introduce more diffusion as in \cite{Maas2011,Erbar2017}.

\review{The advantages of our numerical method are that positivity is automatic and that mass is preserved.} Moreover, as we expect the difference between two solutions $\mu^{k s}$ and $\mu^{(k+1) s}$ to be very small, we do not need a large number of discretization points in time $N$ (we chose $N=5$ in practice) for the computation of the discrete Wasserstein distance.

We \review{apply} our model to two different cases, \review{illustrated in} Figure \ref{figure_jko}. The \review{first} corresponds to
\begin{equation}
\label{equation_gradient_flow_crowd}
F(\mu) = \begin{cases}
\sum_{v \in V} |v| W_v \mu_v & \text{if } \mu_v \leqslant \mu^\star \ \forall v \in V, \\
+ \infty & \text{else},
\end{cases}
\end{equation}
where $W$ contains one value per vertex. This choice of $F$ \review{yields} a crowd motion model \cite{Maury2010,Santambrogio2018}: The probability distribution wants to flow to the areas where $W$ is low, but at the same time its density is constrained to stay below a threshold $\mu^\star$. This model can only be formulated in terms of a gradient flow \review{in} Wasserstein space and not as an evolution PDE, \review{justifying} the use of \review{a} JKO integrator. 
The \review{second} corresponds to
\begin{equation}
\label{equation_gradient_flow_porous_medium}
F(\mu) = \frac{1}{m-1}
\sum_{v \in V} |v|  (\mu_v)^{m}.
\end{equation}
\review{In} the case of a continuous manifold $\M$, \review{this choice of $F$ yields} the porous medium equation $\dr_t \mu = \Delta(\mu^m)$ \cite{Vazquez2007}. With the scheme \eqref{equation_JKO}, we do not recover a discrete cotangent Laplacian, but the computed solution still exhibits features of the porous medium, like a finite speed of propagation and a convergence to a uniform probability distribution.



\section{Discussion and conclusion}

Although techniques using entropic regularization or semi-discrete optimal transport can interpolate between distributions on a discrete surface, they do not provide a Riemannian structure and are subject to practical limitations that restrict the scenarios to which they can be applied.  Using an intrinsic formulation of dynamical transport, we can realize the theoretical and practical potential of optimal transport on discrete domains enabled by the Riemannian structure on the space of probability distributions, the so-called Otto calculus.  Our technique can be phrased in familiar language from discrete differential geometry and is implementable using standard tools in that domain. The key ingredients, namely first- and second-order operators in geometry processing (gradient, divergence, Laplacian) as well as SOCP optimization, remain in the realm of what is already widely used. 

We have demonstrated the power of our \review{model} by showing how it can handle a variety of geometries and peaked distributions, while introducing little diffusion. Mass may concentrate to yield a visually inelegant result, but this behavior is at the core of  optimal transport theory and expected: No price is paid for mass congestion, and hence any concentration of geodesics will result in a concentration of mass. Nevertheless, as we have shown, one can easily modify the optimization problem to penalize congested densities, leading to smoother interpolants with a controllable level of diffusion. Unlike entropically-regularized transport, however, our optimization problem does not degenerate as the coefficient in front of the regularizer vanishes.

Beyond evaluation of transport distances, \review{our} framework \review{extends} to support other tasks involving transport \review{terms}.  We can reliably compute harmonic mappings valued in this discrete Wasserstein space, and the JKO integrator based on our discrete Wasserstein distance exhibits expected qualitative behavior.  

\review{The main drawback of our approach remains its scalability. The bottleneck of the computations is the solution of a linear system whose number of unknowns is the product of the number of discretization points in time and the size of the mesh.  This is an extremely structured linear system on a product manifold, for which specialized matrix inversion techniques may exist. In any event, with the current bottleneck our method can handle meshes with few thousand vertices but is not currently practical for larger meshes.}

As one of the first structure-preserving discretizations of transport on meshes, our work also suggests several exciting avenues for future research.  Many theoretical properties of our discrete Wasserstein distance remain to be explored.  For instance, while we have shown that our formulation is a true Riemannian distance, one could verify the extent to which a wealth of other theoretical properties of transport are preserved.  Convergence of our transport over meshes to the true transport in the limit of mesh refinement also remains an open problem for our techniques and others in a similar class. From a practical perspective, a natural next step is to accelerate the optimization procedure as much as possible; a faster solver for the convex optimization problem would clearly benefit our method.

\review{\section*{Acknowledgements}

J.\ Solomon acknowledges the generous support of Army Research Office grant W911NF-12-R-0011 ("Smooth Modeling of Flows on Graphs"), of National Science Foundation grant IIS-1838071 (``BIGDATA:F: Statistical and Computational Optimal Transport for Geometric Data Analysis''), from the MIT Research Support Committee, from an Amazon Research Award, from the MIT--IBM Watson AI Laboratory, and from the Skoltech--MIT Next Generation Program. Most of this work was done during a visit of H.\ Lavenant to MIT; the hospitality of CSAIL and MIT is warmly acknowledged. The authors thank the reviewers for their helpful feedback.} 

\bibliographystyle{ACM-Reference-Format}
\bibliography{bibliography}


\begin{thebibliography}{66}


\ifx \showCODEN    \undefined \def \showCODEN     #1{\unskip}     \fi
\ifx \showDOI      \undefined \def \showDOI       #1{#1}\fi
\ifx \showISBNx    \undefined \def \showISBNx     #1{\unskip}     \fi
\ifx \showISBNxiii \undefined \def \showISBNxiii  #1{\unskip}     \fi
\ifx \showISSN     \undefined \def \showISSN      #1{\unskip}     \fi
\ifx \showLCCN     \undefined \def \showLCCN      #1{\unskip}     \fi
\ifx \shownote     \undefined \def \shownote      #1{#1}          \fi
\ifx \showarticletitle \undefined \def \showarticletitle #1{#1}   \fi
\ifx \showURL      \undefined \def \showURL       {\relax}        \fi
\providecommand\bibfield[2]{#2}
\providecommand\bibinfo[2]{#2}
\providecommand\natexlab[1]{#1}
\providecommand\showeprint[2][]{arXiv:#2}

\bibitem[\protect\citeauthoryear{Ambrosio, Gigli, and Savar{\'e}}{Ambrosio
  et~al\mbox{.}}{2008}]%
        {Ambrosio2008}
\bibfield{author}{\bibinfo{person}{Luigi Ambrosio}, \bibinfo{person}{Nicola
  Gigli}, {and} \bibinfo{person}{Giuseppe Savar{\'e}}.}
  \bibinfo{year}{2008}\natexlab{}.
\newblock \bibinfo{booktitle}{\emph{Gradient Flows: In Metric Spaces and in the
  Space of Probability Measures}}.
\newblock \bibinfo{publisher}{Springer Science \& Business Media}.
\newblock


\bibitem[\protect\citeauthoryear{Aurenhammer, Hoffmann, and Aronov}{Aurenhammer
  et~al\mbox{.}}{1998}]%
        {Aurenhammer1998}
\bibfield{author}{\bibinfo{person}{Franz Aurenhammer},
  \bibinfo{person}{Friedrich Hoffmann}, {and} \bibinfo{person}{Boris Aronov}.}
  \bibinfo{year}{1998}\natexlab{}.
\newblock \showarticletitle{Minkowski-type theorems and least-squares
  clustering}.
\newblock \bibinfo{journal}{\emph{Algorithmica}} \bibinfo{volume}{20},
  \bibinfo{number}{1} (\bibinfo{year}{1998}), \bibinfo{pages}{61--76}.
\newblock


\bibitem[\protect\citeauthoryear{Azencot, Vantzos, and Ben-Chen}{Azencot
  et~al\mbox{.}}{2016}]%
        {Azencot2016}
\bibfield{author}{\bibinfo{person}{Omri Azencot}, \bibinfo{person}{Orestis
  Vantzos}, {and} \bibinfo{person}{Mirela Ben-Chen}.}
  \bibinfo{year}{2016}\natexlab{}.
\newblock \showarticletitle{Advection-based function matching on surfaces}. In
  \bibinfo{booktitle}{\emph{Computer Graphics Forum}},
  Vol.~\bibinfo{volume}{35}. Wiley Online Library, \bibinfo{pages}{55--64}.
\newblock


\bibitem[\protect\citeauthoryear{Benamou and Brenier}{Benamou and
  Brenier}{2000}]%
        {Benamou2000}
\bibfield{author}{\bibinfo{person}{Jean-David Benamou} {and}
  \bibinfo{person}{Yann Brenier}.} \bibinfo{year}{2000}\natexlab{}.
\newblock \showarticletitle{A computational fluid mechanics solution to the
  {M}onge-{K}antorovich mass transfer problem}.
\newblock \bibinfo{journal}{\emph{Numer. Math.}} \bibinfo{volume}{84},
  \bibinfo{number}{3} (\bibinfo{year}{2000}), \bibinfo{pages}{375--393}.
\newblock


\bibitem[\protect\citeauthoryear{Benamou and Carlier}{Benamou and
  Carlier}{2015}]%
        {Benamou2015}
\bibfield{author}{\bibinfo{person}{Jean-David Benamou} {and}
  \bibinfo{person}{Guillaume Carlier}.} \bibinfo{year}{2015}\natexlab{}.
\newblock \showarticletitle{Augmented {L}agrangian methods for transport
  optimization, mean field games and degenerate elliptic equations}.
\newblock \bibinfo{journal}{\emph{Journal of Optimization Theory and
  Applications}} \bibinfo{volume}{167}, \bibinfo{number}{1}
  (\bibinfo{year}{2015}), \bibinfo{pages}{1--26}.
\newblock


\bibitem[\protect\citeauthoryear{Benamou, Carlier, Cuturi, Nenna, and
  Peyr{\'e}}{Benamou et~al\mbox{.}}{2015}]%
        {benamou2015iterative}
\bibfield{author}{\bibinfo{person}{Jean-David Benamou},
  \bibinfo{person}{Guillaume Carlier}, \bibinfo{person}{Marco Cuturi},
  \bibinfo{person}{Luca Nenna}, {and} \bibinfo{person}{Gabriel Peyr{\'e}}.}
  \bibinfo{year}{2015}\natexlab{}.
\newblock \showarticletitle{Iterative {B}regman projections for regularized
  transportation problems}.
\newblock \bibinfo{journal}{\emph{SIAM Journal on Scientific Computing}}
  \bibinfo{volume}{37}, \bibinfo{number}{2} (\bibinfo{year}{2015}),
  \bibinfo{pages}{A1111--A1138}.
\newblock


\bibitem[\protect\citeauthoryear{Benamou, Carlier, and Laborde}{Benamou
  et~al\mbox{.}}{2016}]%
        {Benamou2016}
\bibfield{author}{\bibinfo{person}{Jean-David Benamou},
  \bibinfo{person}{Guillaume Carlier}, {and} \bibinfo{person}{Maxime Laborde}.}
  \bibinfo{year}{2016}\natexlab{}.
\newblock \showarticletitle{An augmented {L}agrangian approach to {W}asserstein
  gradient flows and applications}.
\newblock \bibinfo{journal}{\emph{ESAIM: Proceedings and Surveys}}
  \bibinfo{volume}{54} (\bibinfo{year}{2016}), \bibinfo{pages}{1--17}.
\newblock


\bibitem[\protect\citeauthoryear{Benamou, Carlier, and Santambrogio}{Benamou
  et~al\mbox{.}}{2017}]%
        {Benamou2017}
\bibfield{author}{\bibinfo{person}{Jean-David Benamou},
  \bibinfo{person}{Guillaume Carlier}, {and} \bibinfo{person}{Filippo
  Santambrogio}.} \bibinfo{year}{2017}\natexlab{}.
\newblock \showarticletitle{Variational mean field games}.
\newblock In \bibinfo{booktitle}{\emph{Active Particles, Volume 1}}.
  \bibinfo{publisher}{Springer}, \bibinfo{pages}{141--171}.
\newblock


\bibitem[\protect\citeauthoryear{Bonneel, Van De~Panne, Paris, and
  Heidrich}{Bonneel et~al\mbox{.}}{2011}]%
        {bonneel2011displacement}
\bibfield{author}{\bibinfo{person}{Nicolas Bonneel}, \bibinfo{person}{Michiel
  Van De~Panne}, \bibinfo{person}{Sylvain Paris}, {and}
  \bibinfo{person}{Wolfgang Heidrich}.} \bibinfo{year}{2011}\natexlab{}.
\newblock \showarticletitle{Displacement interpolation using {L}agrangian mass
  transport}. In \bibinfo{booktitle}{\emph{ACM Transactions on Graphics
  (TOG)}}, Vol.~\bibinfo{volume}{30}. ACM, \bibinfo{pages}{158}.
\newblock


\bibitem[\protect\citeauthoryear{Boyd, Parikh, Chu, Peleato, Eckstein,
  et~al\mbox{.}}{Boyd et~al\mbox{.}}{2011}]%
        {Boyd2011}
\bibfield{author}{\bibinfo{person}{Stephen Boyd}, \bibinfo{person}{Neal
  Parikh}, \bibinfo{person}{Eric Chu}, \bibinfo{person}{Borja Peleato},
  \bibinfo{person}{Jonathan Eckstein}, {et~al\mbox{.}}}
  \bibinfo{year}{2011}\natexlab{}.
\newblock \showarticletitle{Distributed optimization and statistical learning
  via the alternating direction method of multipliers}.
\newblock \bibinfo{journal}{\emph{Foundations and Trends in Machine Learning}}
  \bibinfo{volume}{3}, \bibinfo{number}{1} (\bibinfo{year}{2011}),
  \bibinfo{pages}{1--122}.
\newblock


\bibitem[\protect\citeauthoryear{Boyd and Vandenberghe}{Boyd and
  Vandenberghe}{2004}]%
        {Boyd2004}
\bibfield{author}{\bibinfo{person}{Stephen Boyd} {and} \bibinfo{person}{Lieven
  Vandenberghe}.} \bibinfo{year}{2004}\natexlab{}.
\newblock \bibinfo{booktitle}{\emph{Convex Optimization}}.
\newblock \bibinfo{publisher}{Cambridge University Press}.
\newblock


\bibitem[\protect\citeauthoryear{Brenier}{Brenier}{1991}]%
        {Brenier1991}
\bibfield{author}{\bibinfo{person}{Yann Brenier}.}
  \bibinfo{year}{1991}\natexlab{}.
\newblock \showarticletitle{Polar factorization and monotone rearrangement of
  vector-valued functions}.
\newblock \bibinfo{journal}{\emph{Communications on Pure and Applied
  Mathematics}} \bibinfo{volume}{44}, \bibinfo{number}{4}
  (\bibinfo{year}{1991}), \bibinfo{pages}{375--417}.
\newblock


\bibitem[\protect\citeauthoryear{Brenner and Scott}{Brenner and Scott}{2007}]%
        {brenner2007mathematical}
\bibfield{author}{\bibinfo{person}{Susanne Brenner} {and}
  \bibinfo{person}{Ridgway Scott}.} \bibinfo{year}{2007}\natexlab{}.
\newblock \bibinfo{booktitle}{\emph{The Mathematical Theory of Finite Element
  Methods}}. Vol.~\bibinfo{volume}{15}.
\newblock \bibinfo{publisher}{Springer Science \& Business Media}.
\newblock


\bibitem[\protect\citeauthoryear{Chow, Dieci, Li, and Zhou}{Chow
  et~al\mbox{.}}{2016}]%
        {Chow2016}
\bibfield{author}{\bibinfo{person}{Shui-Nee Chow}, \bibinfo{person}{Luca
  Dieci}, \bibinfo{person}{Wuchen Li}, {and} \bibinfo{person}{Haomin Zhou}.}
  \bibinfo{year}{2016}\natexlab{}.
\newblock \showarticletitle{Entropy dissipation semi-discretization schemes for
  {F}okker-{P}lanck equations}.
\newblock \bibinfo{journal}{\emph{arXiv:1608.02628}} (\bibinfo{year}{2016}).
\newblock


\bibitem[\protect\citeauthoryear{Claici, Chien, and Solomon}{Claici
  et~al\mbox{.}}{2018}]%
        {Claici2018}
\bibfield{author}{\bibinfo{person}{Sebastian Claici}, \bibinfo{person}{Edward
  Chien}, {and} \bibinfo{person}{Justin Solomon}.}
  \bibinfo{year}{2018}\natexlab{}.
\newblock \showarticletitle{Stochastic {W}asserstein barycenters}. In
  \bibinfo{booktitle}{\emph{Proceedings of the 35th International Conference on
  Machine Learning, ICML 2018 (to appear)}}.
\newblock


\bibitem[\protect\citeauthoryear{Cuturi}{Cuturi}{2013}]%
        {Cuturi2013}
\bibfield{author}{\bibinfo{person}{Marco Cuturi}.}
  \bibinfo{year}{2013}\natexlab{}.
\newblock \showarticletitle{Sinkhorn distances: Lightspeed computation of
  optimal transport}. In \bibinfo{booktitle}{\emph{Advances in Neural
  Information Processing Systems, NIPS 2013}}. \bibinfo{pages}{2292--2300}.
\newblock


\bibitem[\protect\citeauthoryear{Cuturi and Doucet}{Cuturi and Doucet}{2014}]%
        {cuturi2014fast}
\bibfield{author}{\bibinfo{person}{Marco Cuturi} {and} \bibinfo{person}{Arnaud
  Doucet}.} \bibinfo{year}{2014}\natexlab{}.
\newblock \showarticletitle{Fast computation of {W}asserstein barycenters}. In
  \bibinfo{booktitle}{\emph{International Conference on Machine Learning}}.
  \bibinfo{pages}{685--693}.
\newblock


\bibitem[\protect\citeauthoryear{De~Goes, Breeden, Ostromoukhov, and
  Desbrun}{De~Goes et~al\mbox{.}}{2012}]%
        {de2012blue}
\bibfield{author}{\bibinfo{person}{Fernando De~Goes},
  \bibinfo{person}{Katherine Breeden}, \bibinfo{person}{Victor Ostromoukhov},
  {and} \bibinfo{person}{Mathieu Desbrun}.} \bibinfo{year}{2012}\natexlab{}.
\newblock \showarticletitle{Blue noise through optimal transport}.
\newblock \bibinfo{journal}{\emph{ACM Transactions on Graphics (TOG)}}
  \bibinfo{volume}{31}, \bibinfo{number}{6} (\bibinfo{year}{2012}),
  \bibinfo{pages}{171}.
\newblock


\bibitem[\protect\citeauthoryear{de~Goes, Cohen-Steiner, Alliez, and
  Desbrun}{de~Goes et~al\mbox{.}}{2011}]%
        {DeGoes2011}
\bibfield{author}{\bibinfo{person}{Fernando de Goes}, \bibinfo{person}{David
  Cohen-Steiner}, \bibinfo{person}{Pierre Alliez}, {and}
  \bibinfo{person}{Mathieu Desbrun}.} \bibinfo{year}{2011}\natexlab{}.
\newblock \showarticletitle{An optimal transport approach to robust
  reconstruction and simplification of 2d shapes}. In
  \bibinfo{booktitle}{\emph{Computer Graphics Forum}},
  Vol.~\bibinfo{volume}{30}. Wiley Online Library, \bibinfo{pages}{1593--1602}.
\newblock


\bibitem[\protect\citeauthoryear{de~Goes, Desbrun, and Tong}{de~Goes
  et~al\mbox{.}}{2015a}]%
        {DeGoes2015vector}
\bibfield{author}{\bibinfo{person}{Fernando de Goes}, \bibinfo{person}{Mathieu
  Desbrun}, {and} \bibinfo{person}{Yiying Tong}.}
  \bibinfo{year}{2015}\natexlab{a}.
\newblock \showarticletitle{Vector field processing on triangle meshes}. In
  \bibinfo{booktitle}{\emph{SIGGRAPH Asia 2015 Courses}}. ACM,
  \bibinfo{pages}{17}.
\newblock


\bibitem[\protect\citeauthoryear{de~Goes, Memari, Mullen, and Desbrun}{de~Goes
  et~al\mbox{.}}{2014}]%
        {DeGoes2014}
\bibfield{author}{\bibinfo{person}{Fernando de Goes}, \bibinfo{person}{Pooran
  Memari}, \bibinfo{person}{Patrick Mullen}, {and} \bibinfo{person}{Mathieu
  Desbrun}.} \bibinfo{year}{2014}\natexlab{}.
\newblock \showarticletitle{Weighted Triangulations for Geometry Processing}.
\newblock \bibinfo{journal}{\emph{ACM Trans. Graph.}} \bibinfo{volume}{33},
  \bibinfo{number}{3} (\bibinfo{date}{June} \bibinfo{year}{2014}),
  \bibinfo{pages}{28:1--28:13}.
\newblock


\bibitem[\protect\citeauthoryear{de~Goes, Wallez, Huang, Pavlov, and
  Desbrun}{de~Goes et~al\mbox{.}}{2015b}]%
        {DeGoes2015fluid}
\bibfield{author}{\bibinfo{person}{Fernando de Goes}, \bibinfo{person}{Corentin
  Wallez}, \bibinfo{person}{Jin Huang}, \bibinfo{person}{Dmitry Pavlov}, {and}
  \bibinfo{person}{Mathieu Desbrun}.} \bibinfo{year}{2015}\natexlab{b}.
\newblock \showarticletitle{Power particles: an incompressible fluid solver
  based on power diagrams}.
\newblock \bibinfo{journal}{\emph{ACM Trans. Graph.}} \bibinfo{volume}{34},
  \bibinfo{number}{4} (\bibinfo{year}{2015}), \bibinfo{pages}{50--1}.
\newblock


\bibitem[\protect\citeauthoryear{Digne, Cohen-Steiner, Alliez, De~Goes, and
  Desbrun}{Digne et~al\mbox{.}}{2014}]%
        {Digne2014}
\bibfield{author}{\bibinfo{person}{Julie Digne}, \bibinfo{person}{David
  Cohen-Steiner}, \bibinfo{person}{Pierre Alliez}, \bibinfo{person}{Fernando
  De~Goes}, {and} \bibinfo{person}{Mathieu Desbrun}.}
  \bibinfo{year}{2014}\natexlab{}.
\newblock \showarticletitle{Feature-preserving surface reconstruction and
  simplification from defect-laden point sets}.
\newblock \bibinfo{journal}{\emph{Journal of Mathematical Imaging and Vision}}
  \bibinfo{volume}{48}, \bibinfo{number}{2} (\bibinfo{year}{2014}),
  \bibinfo{pages}{369--382}.
\newblock


\bibitem[\protect\citeauthoryear{Edmonds and Karp}{Edmonds and Karp}{1972}]%
        {edmonds1972theoretical}
\bibfield{author}{\bibinfo{person}{Jack Edmonds} {and}
  \bibinfo{person}{Richard~M Karp}.} \bibinfo{year}{1972}\natexlab{}.
\newblock \showarticletitle{Theoretical improvements in algorithmic efficiency
  for network flow problems}.
\newblock \bibinfo{journal}{\emph{Journal of the ACM (JACM)}}
  \bibinfo{volume}{19}, \bibinfo{number}{2} (\bibinfo{year}{1972}),
  \bibinfo{pages}{248--264}.
\newblock


\bibitem[\protect\citeauthoryear{Erbar, Rumpf, Schmitzer, and Simon}{Erbar
  et~al\mbox{.}}{2017}]%
        {Erbar2017}
\bibfield{author}{\bibinfo{person}{Matthias Erbar}, \bibinfo{person}{Martin
  Rumpf}, \bibinfo{person}{Bernhard Schmitzer}, {and} \bibinfo{person}{Stefan
  Simon}.} \bibinfo{year}{2017}\natexlab{}.
\newblock \showarticletitle{Computation of Optimal Transport on Discrete Metric
  Measure Spaces}.
\newblock \bibinfo{journal}{\emph{arXiv:1707.06859}} (\bibinfo{year}{2017}).
\newblock


\bibitem[\protect\citeauthoryear{Gallou{\"e}t and M{\'e}rigot}{Gallou{\"e}t and
  M{\'e}rigot}{2017}]%
        {Gallouet2017}
\bibfield{author}{\bibinfo{person}{Thomas~O Gallou{\"e}t} {and}
  \bibinfo{person}{Quentin M{\'e}rigot}.} \bibinfo{year}{2017}\natexlab{}.
\newblock \showarticletitle{A Lagrangian scheme {\`a} la Brenier for the
  incompressible Euler equations}.
\newblock \bibinfo{journal}{\emph{Foundations of Computational Mathematics}}
  (\bibinfo{year}{2017}), \bibinfo{pages}{1--31}.
\newblock


\bibitem[\protect\citeauthoryear{Gangbo and McCann}{Gangbo and McCann}{1996}]%
        {Gangbo1996}
\bibfield{author}{\bibinfo{person}{Wilfrid Gangbo} {and}
  \bibinfo{person}{Robert~J McCann}.} \bibinfo{year}{1996}\natexlab{}.
\newblock \showarticletitle{The geometry of optimal transportation}.
\newblock \bibinfo{journal}{\emph{Acta Mathematica}} \bibinfo{volume}{177},
  \bibinfo{number}{2} (\bibinfo{year}{1996}), \bibinfo{pages}{113--161}.
\newblock


\bibitem[\protect\citeauthoryear{Gigli and Maas}{Gigli and Maas}{2013}]%
        {Gigli2013}
\bibfield{author}{\bibinfo{person}{Nicola Gigli} {and} \bibinfo{person}{Jan
  Maas}.} \bibinfo{year}{2013}\natexlab{}.
\newblock \showarticletitle{Gromov--{H}ausdorff convergence of discrete
  transportation metrics}.
\newblock \bibinfo{journal}{\emph{SIAM Journal on Mathematical Analysis}}
  \bibinfo{volume}{45}, \bibinfo{number}{2} (\bibinfo{year}{2013}),
  \bibinfo{pages}{879--899}.
\newblock


\bibitem[\protect\citeauthoryear{Gladbach, Kopfer, and Maas}{Gladbach
  et~al\mbox{.}}{2018}]%
        {Gladbach2018}
\bibfield{author}{\bibinfo{person}{Peter Gladbach}, \bibinfo{person}{Eva
  Kopfer}, {and} \bibinfo{person}{Jan Maas}.} \bibinfo{year}{2018}\natexlab{}.
\newblock \showarticletitle{Scaling limits of discrete optimal transport}.
\newblock \bibinfo{journal}{\emph{arXiv:1809.01092}} (\bibinfo{year}{2018}).
\newblock


\bibitem[\protect\citeauthoryear{Grant and Boyd}{Grant and Boyd}{2008}]%
        {gb08}
\bibfield{author}{\bibinfo{person}{Michael Grant} {and}
  \bibinfo{person}{Stephen Boyd}.} \bibinfo{year}{2008}\natexlab{}.
\newblock \showarticletitle{Graph implementations for nonsmooth convex
  programs}.
\newblock In \bibinfo{booktitle}{\emph{Recent Advances in Learning and
  Control}}, \bibfield{editor}{\bibinfo{person}{V.~Blondel},
  \bibinfo{person}{S.~Boyd}, {and} \bibinfo{person}{H.~Kimura}} (Eds.).
  \bibinfo{publisher}{Springer-Verlag Limited}, \bibinfo{pages}{95--110}.
\newblock


\bibitem[\protect\citeauthoryear{Grant and Boyd}{Grant and Boyd}{2014}]%
        {cvx}
\bibfield{author}{\bibinfo{person}{Michael Grant} {and}
  \bibinfo{person}{Stephen Boyd}.} \bibinfo{year}{2014}\natexlab{}.
\newblock \bibinfo{title}{{CVX}: {M}atlab Software for Disciplined Convex
  Programming, version 2.1}.
\newblock \bibinfo{howpublished}{\url{http://cvxr.com/cvx}}.
\newblock


\bibitem[\protect\citeauthoryear{Guittet}{Guittet}{2003}]%
        {Guittet2003}
\bibfield{author}{\bibinfo{person}{Kevin Guittet}.}
  \bibinfo{year}{2003}\natexlab{}.
\newblock \showarticletitle{On the time-continuous mass transport problem and
  its approximation by augmented {L}agrangian techniques}.
\newblock \bibinfo{journal}{\emph{SIAM J. Numer. Anal.}} \bibinfo{volume}{41},
  \bibinfo{number}{1} (\bibinfo{year}{2003}), \bibinfo{pages}{382--399}.
\newblock


\bibitem[\protect\citeauthoryear{Heeren, Rumpf, Wardetzky, and Wirth}{Heeren
  et~al\mbox{.}}{2012}]%
        {Heeren2012}
\bibfield{author}{\bibinfo{person}{Behrend Heeren}, \bibinfo{person}{Martin
  Rumpf}, \bibinfo{person}{Max Wardetzky}, {and} \bibinfo{person}{Benedikt
  Wirth}.} \bibinfo{year}{2012}\natexlab{}.
\newblock \showarticletitle{Time-Discrete Geodesics in the Space of Shells}. In
  \bibinfo{booktitle}{\emph{Computer Graphics Forum}},
  Vol.~\bibinfo{volume}{31}. Wiley Online Library, \bibinfo{pages}{1755--1764}.
\newblock


\bibitem[\protect\citeauthoryear{Hug, Papadakis, and Maitre}{Hug
  et~al\mbox{.}}{2015}]%
        {Hug2015}
\bibfield{author}{\bibinfo{person}{Romain Hug}, \bibinfo{person}{Nicolas
  Papadakis}, {and} \bibinfo{person}{Emmanuel Maitre}.}
  \bibinfo{year}{2015}\natexlab{}.
\newblock \showarticletitle{On the convergence of augmented {L}agrangian method
  for optimal transport between nonnegative densities}.
\newblock  (\bibinfo{year}{2015}).
\newblock


\bibitem[\protect\citeauthoryear{Jordan, Kinderlehrer, and Otto}{Jordan
  et~al\mbox{.}}{1998}]%
        {Jordan1998}
\bibfield{author}{\bibinfo{person}{Richard Jordan}, \bibinfo{person}{David
  Kinderlehrer}, {and} \bibinfo{person}{Felix Otto}.}
  \bibinfo{year}{1998}\natexlab{}.
\newblock \showarticletitle{The variational formulation of the
  {F}okker--{P}lanck equation}.
\newblock \bibinfo{journal}{\emph{SIAM Journal on Mathematical Analysis}}
  \bibinfo{volume}{29}, \bibinfo{number}{1} (\bibinfo{year}{1998}),
  \bibinfo{pages}{1--17}.
\newblock


\bibitem[\protect\citeauthoryear{Jost}{Jost}{2008}]%
        {Jost2008}
\bibfield{author}{\bibinfo{person}{J{\"u}rgen Jost}.}
  \bibinfo{year}{2008}\natexlab{}.
\newblock \bibinfo{booktitle}{\emph{Riemannian geometry and geometric
  analysis}}. Vol.~\bibinfo{volume}{42005}.
\newblock \bibinfo{publisher}{Springer}.
\newblock


\bibitem[\protect\citeauthoryear{Kantorovich}{Kantorovich}{1942}]%
        {Kantorovich1942}
\bibfield{author}{\bibinfo{person}{Leonid~V Kantorovich}.}
  \bibinfo{year}{1942}\natexlab{}.
\newblock \showarticletitle{On the translocation of masses}. In
  \bibinfo{booktitle}{\emph{Dokl. Akad. Nauk. USSR (NS)}},
  Vol.~\bibinfo{volume}{37}. \bibinfo{pages}{199--201}.
\newblock


\bibitem[\protect\citeauthoryear{Kitagawa, M{\'e}rigot, and Thibert}{Kitagawa
  et~al\mbox{.}}{2016}]%
        {Kitagawa2016}
\bibfield{author}{\bibinfo{person}{Jun Kitagawa}, \bibinfo{person}{Quentin
  M{\'e}rigot}, {and} \bibinfo{person}{Boris Thibert}.}
  \bibinfo{year}{2016}\natexlab{}.
\newblock \showarticletitle{Convergence of a Newton algorithm for semi-discrete
  optimal transport}.
\newblock \bibinfo{journal}{\emph{arXiv preprint arXiv:1603.05579}}
  (\bibinfo{year}{2016}).
\newblock


\bibitem[\protect\citeauthoryear{Klein}{Klein}{1967}]%
        {klein1967primal}
\bibfield{author}{\bibinfo{person}{Morton Klein}.}
  \bibinfo{year}{1967}\natexlab{}.
\newblock \showarticletitle{A primal method for minimal cost flows with
  applications to the assignment and transportation problems}.
\newblock \bibinfo{journal}{\emph{Management Science}} \bibinfo{volume}{14},
  \bibinfo{number}{3} (\bibinfo{year}{1967}), \bibinfo{pages}{205--220}.
\newblock


\bibitem[\protect\citeauthoryear{Lavenant}{Lavenant}{2017}]%
        {Lavenant2017}
\bibfield{author}{\bibinfo{person}{Hugo Lavenant}.}
  \bibinfo{year}{2017}\natexlab{}.
\newblock \showarticletitle{Harmonic mappings valued in the {W}asserstein
  space}.
\newblock \bibinfo{journal}{\emph{arXiv:1712.07528}} (\bibinfo{year}{2017}).
\newblock


\bibitem[\protect\citeauthoryear{L{\'e}vy}{L{\'e}vy}{2015}]%
        {Levy2015}
\bibfield{author}{\bibinfo{person}{Bruno L{\'e}vy}.}
  \bibinfo{year}{2015}\natexlab{}.
\newblock \showarticletitle{A numerical algorithm for {$L_2$} semi-discrete
  optimal transport in 3D}.
\newblock \bibinfo{journal}{\emph{ESAIM: Mathematical Modelling and Numerical
  Analysis}} \bibinfo{volume}{49}, \bibinfo{number}{6} (\bibinfo{year}{2015}),
  \bibinfo{pages}{1693--1715}.
\newblock


\bibitem[\protect\citeauthoryear{Li, Ryu, Osher, Yin, and Gangbo}{Li
  et~al\mbox{.}}{2018}]%
        {Li2018}
\bibfield{author}{\bibinfo{person}{Wuchen Li}, \bibinfo{person}{Ernest~K Ryu},
  \bibinfo{person}{Stanley Osher}, \bibinfo{person}{Wotao Yin}, {and}
  \bibinfo{person}{Wilfrid Gangbo}.} \bibinfo{year}{2018}\natexlab{}.
\newblock \showarticletitle{A parallel method for earth mover's distance}.
\newblock \bibinfo{journal}{\emph{Journal of Scientific Computing}}
  \bibinfo{volume}{75}, \bibinfo{number}{1} (\bibinfo{year}{2018}),
  \bibinfo{pages}{182--197}.
\newblock


\bibitem[\protect\citeauthoryear{Lu}{Lu}{2017}]%
        {Lu2017}
\bibfield{author}{\bibinfo{person}{Zhuoran Lu}.}
  \bibinfo{year}{2017}\natexlab{}.
\newblock \emph{\bibinfo{title}{Properties of Soft Maps on {R}iemannian
  Manifolds}}.
\newblock \bibinfo{thesistype}{Ph.D. Dissertation}. \bibinfo{school}{New York
  University}.
\newblock


\bibitem[\protect\citeauthoryear{Maas}{Maas}{2011}]%
        {Maas2011}
\bibfield{author}{\bibinfo{person}{Jan Maas}.} \bibinfo{year}{2011}\natexlab{}.
\newblock \showarticletitle{Gradient flows of the entropy for finite {M}arkov
  chains}.
\newblock \bibinfo{journal}{\emph{Journal of Functional Analysis}}
  \bibinfo{volume}{261}, \bibinfo{number}{8} (\bibinfo{year}{2011}),
  \bibinfo{pages}{2250--2292}.
\newblock


\bibitem[\protect\citeauthoryear{Maury, Roudneff-Chupin, and
  Santambrogio}{Maury et~al\mbox{.}}{2010}]%
        {Maury2010}
\bibfield{author}{\bibinfo{person}{Bertrand Maury}, \bibinfo{person}{Aude
  Roudneff-Chupin}, {and} \bibinfo{person}{Filippo Santambrogio}.}
  \bibinfo{year}{2010}\natexlab{}.
\newblock \showarticletitle{A macroscopic crowd motion model of gradient flow
  type}.
\newblock \bibinfo{journal}{\emph{Mathematical Models and Methods in Applied
  Sciences}} \bibinfo{volume}{20}, \bibinfo{number}{10} (\bibinfo{year}{2010}),
  \bibinfo{pages}{1787--1821}.
\newblock


\bibitem[\protect\citeauthoryear{McCann}{McCann}{1997}]%
        {Mccann1997}
\bibfield{author}{\bibinfo{person}{Robert~J McCann}.}
  \bibinfo{year}{1997}\natexlab{}.
\newblock \showarticletitle{A convexity principle for interacting gases}.
\newblock \bibinfo{journal}{\emph{Advances in Mathematics}}
  \bibinfo{volume}{128}, \bibinfo{number}{1} (\bibinfo{year}{1997}),
  \bibinfo{pages}{153--179}.
\newblock


\bibitem[\protect\citeauthoryear{M{\'e}rigot}{M{\'e}rigot}{2011}]%
        {Merigot2011}
\bibfield{author}{\bibinfo{person}{Quentin M{\'e}rigot}.}
  \bibinfo{year}{2011}\natexlab{}.
\newblock \showarticletitle{A multiscale approach to optimal transport}. In
  \bibinfo{booktitle}{\emph{Computer Graphics Forum}},
  Vol.~\bibinfo{volume}{30}. Wiley Online Library, \bibinfo{pages}{1583--1592}.
\newblock


\bibitem[\protect\citeauthoryear{M{\'e}rigot, Meyron, and Thibert}{M{\'e}rigot
  et~al\mbox{.}}{2018}]%
        {Merigot2018}
\bibfield{author}{\bibinfo{person}{Quentin M{\'e}rigot},
  \bibinfo{person}{Jocelyn Meyron}, {and} \bibinfo{person}{Boris Thibert}.}
  \bibinfo{year}{2018}\natexlab{}.
\newblock \showarticletitle{An algorithm for optimal transport between a
  simplex soup and a point cloud}.
\newblock \bibinfo{journal}{\emph{SIAM Journal on Imaging Sciences}}
  \bibinfo{volume}{11}, \bibinfo{number}{2} (\bibinfo{year}{2018}),
  \bibinfo{pages}{1363--1389}.
\newblock


\bibitem[\protect\citeauthoryear{M{\'e}rigot and Mirebeau}{M{\'e}rigot and
  Mirebeau}{2016}]%
        {Merigot2016}
\bibfield{author}{\bibinfo{person}{Quentin M{\'e}rigot} {and}
  \bibinfo{person}{Jean-Marie Mirebeau}.} \bibinfo{year}{2016}\natexlab{}.
\newblock \showarticletitle{Minimal geodesics along volume-preserving maps,
  through semidiscrete optimal transport}.
\newblock \bibinfo{journal}{\emph{SIAM J. Numer. Anal.}} \bibinfo{volume}{54},
  \bibinfo{number}{6} (\bibinfo{year}{2016}), \bibinfo{pages}{3465--3492}.
\newblock


\bibitem[\protect\citeauthoryear{{MOSEK ApS}}{{MOSEK ApS}}{2017}]%
        {mosek}
\bibfield{author}{\bibinfo{person}{{MOSEK ApS}}.}
  \bibinfo{year}{2017}\natexlab{}.
\newblock \bibinfo{booktitle}{\emph{The {MOSEK} optimization toolbox for
  {MATLAB} manual}}.
\newblock


\bibitem[\protect\citeauthoryear{Orlin}{Orlin}{1997}]%
        {orlin1997polynomial}
\bibfield{author}{\bibinfo{person}{James~B Orlin}.}
  \bibinfo{year}{1997}\natexlab{}.
\newblock \showarticletitle{A polynomial time primal network simplex algorithm
  for minimum cost flows}.
\newblock \bibinfo{journal}{\emph{Mathematical Programming}}
  \bibinfo{volume}{78}, \bibinfo{number}{2} (\bibinfo{year}{1997}),
  \bibinfo{pages}{109--129}.
\newblock


\bibitem[\protect\citeauthoryear{Otto}{Otto}{2001}]%
        {Otto2001}
\bibfield{author}{\bibinfo{person}{Felix Otto}.}
  \bibinfo{year}{2001}\natexlab{}.
\newblock \showarticletitle{The geometry of dissipative evolution equations:
  the porous medium equation}.
\newblock  (\bibinfo{year}{2001}).
\newblock


\bibitem[\protect\citeauthoryear{Panozzo, Baran, Diamanti, and
  Sorkine-Hornung}{Panozzo et~al\mbox{.}}{2013}]%
        {panozzo2013weighted}
\bibfield{author}{\bibinfo{person}{Daniele Panozzo}, \bibinfo{person}{Ilya
  Baran}, \bibinfo{person}{Olga Diamanti}, {and} \bibinfo{person}{Olga
  Sorkine-Hornung}.} \bibinfo{year}{2013}\natexlab{}.
\newblock \showarticletitle{Weighted averages on surfaces}.
\newblock \bibinfo{journal}{\emph{ACM Transactions on Graphics (TOG)}}
  \bibinfo{volume}{32}, \bibinfo{number}{4} (\bibinfo{year}{2013}),
  \bibinfo{pages}{60}.
\newblock


\bibitem[\protect\citeauthoryear{Papadakis, Peyr{\'e}, and Oudet}{Papadakis
  et~al\mbox{.}}{2014}]%
        {Papadakis2014}
\bibfield{author}{\bibinfo{person}{Nicolas Papadakis}, \bibinfo{person}{Gabriel
  Peyr{\'e}}, {and} \bibinfo{person}{Edouard Oudet}.}
  \bibinfo{year}{2014}\natexlab{}.
\newblock \showarticletitle{Optimal transport with proximal splitting}.
\newblock \bibinfo{journal}{\emph{SIAM Journal on Imaging Sciences}}
  \bibinfo{volume}{7}, \bibinfo{number}{1} (\bibinfo{year}{2014}),
  \bibinfo{pages}{212--238}.
\newblock


\bibitem[\protect\citeauthoryear{Pinkall and Polthier}{Pinkall and
  Polthier}{1993}]%
        {Pinkall1993}
\bibfield{author}{\bibinfo{person}{Ulrich Pinkall} {and}
  \bibinfo{person}{Konrad Polthier}.} \bibinfo{year}{1993}\natexlab{}.
\newblock \showarticletitle{Computing discrete minimal surfaces and their
  conjugates}.
\newblock \bibinfo{journal}{\emph{Experimental Mathematics}}
  \bibinfo{volume}{2}, \bibinfo{number}{1} (\bibinfo{year}{1993}),
  \bibinfo{pages}{15--36}.
\newblock


\bibitem[\protect\citeauthoryear{Polthier and Schmies}{Polthier and
  Schmies}{2006}]%
        {Polthier2006}
\bibfield{author}{\bibinfo{person}{Konrad Polthier} {and}
  \bibinfo{person}{Markus Schmies}.} \bibinfo{year}{2006}\natexlab{}.
\newblock \bibinfo{booktitle}{\emph{Straightest geodesics on polyhedral
  surfaces}}.
\newblock \bibinfo{publisher}{ACM}.
\newblock


\bibitem[\protect\citeauthoryear{Santambrogio}{Santambrogio}{2015}]%
        {Santambrogio2015}
\bibfield{author}{\bibinfo{person}{Filippo Santambrogio}.}
  \bibinfo{year}{2015}\natexlab{}.
\newblock \showarticletitle{Optimal transport for applied mathematicians}.
\newblock \bibinfo{journal}{\emph{Birk{\"a}user, NY}} (\bibinfo{year}{2015}),
  \bibinfo{pages}{99--102}.
\newblock


\bibitem[\protect\citeauthoryear{Santambrogio}{Santambrogio}{2018}]%
        {Santambrogio2018}
\bibfield{author}{\bibinfo{person}{Filippo Santambrogio}.}
  \bibinfo{year}{2018}\natexlab{}.
\newblock \bibinfo{title}{Crowd motion and evolution {PDE}s under density
  constraints}.
\newblock
\newblock


\bibitem[\protect\citeauthoryear{Solomon, De~Goes, Peyr{\'e}, Cuturi, Butscher,
  Nguyen, Du, and Guibas}{Solomon et~al\mbox{.}}{2015}]%
        {Solomon2015}
\bibfield{author}{\bibinfo{person}{Justin Solomon}, \bibinfo{person}{Fernando
  De~Goes}, \bibinfo{person}{Gabriel Peyr{\'e}}, \bibinfo{person}{Marco
  Cuturi}, \bibinfo{person}{Adrian Butscher}, \bibinfo{person}{Andy Nguyen},
  \bibinfo{person}{Tao Du}, {and} \bibinfo{person}{Leonidas Guibas}.}
  \bibinfo{year}{2015}\natexlab{}.
\newblock \showarticletitle{Convolutional {W}asserstein distances: Efficient
  optimal transportation on geometric domains}.
\newblock \bibinfo{journal}{\emph{ACM Transactions on Graphics (TOG)}}
  \bibinfo{volume}{34}, \bibinfo{number}{4} (\bibinfo{year}{2015}),
  \bibinfo{pages}{66}.
\newblock


\bibitem[\protect\citeauthoryear{Solomon, Guibas, and Butscher}{Solomon
  et~al\mbox{.}}{2013}]%
        {Solomon2013}
\bibfield{author}{\bibinfo{person}{Justin Solomon}, \bibinfo{person}{Leonidas
  Guibas}, {and} \bibinfo{person}{Adrian Butscher}.}
  \bibinfo{year}{2013}\natexlab{}.
\newblock \showarticletitle{Dirichlet energy for analysis and synthesis of soft
  maps}. In \bibinfo{booktitle}{\emph{Computer Graphics Forum}},
  Vol.~\bibinfo{volume}{32}. Wiley Online Library, \bibinfo{pages}{197--206}.
\newblock


\bibitem[\protect\citeauthoryear{Solomon, Nguyen, Butscher, Ben-Chen, and
  Guibas}{Solomon et~al\mbox{.}}{2012}]%
        {solomon2012soft}
\bibfield{author}{\bibinfo{person}{Justin Solomon}, \bibinfo{person}{Andy
  Nguyen}, \bibinfo{person}{Adrian Butscher}, \bibinfo{person}{Mirela
  Ben-Chen}, {and} \bibinfo{person}{Leonidas Guibas}.}
  \bibinfo{year}{2012}\natexlab{}.
\newblock \showarticletitle{Soft maps between surfaces}. In
  \bibinfo{booktitle}{\emph{Computer Graphics Forum}},
  Vol.~\bibinfo{volume}{31}. Wiley Online Library, \bibinfo{pages}{1617--1626}.
\newblock


\bibitem[\protect\citeauthoryear{Solomon, Rustamov, Guibas, and
  Butscher}{Solomon et~al\mbox{.}}{2014}]%
        {Solomon2014}
\bibfield{author}{\bibinfo{person}{Justin Solomon}, \bibinfo{person}{Raif
  Rustamov}, \bibinfo{person}{Leonidas Guibas}, {and} \bibinfo{person}{Adrian
  Butscher}.} \bibinfo{year}{2014}\natexlab{}.
\newblock \showarticletitle{Earth mover's distances on discrete surfaces}.
\newblock \bibinfo{journal}{\emph{ACM Transactions on Graphics (TOG)}}
  \bibinfo{volume}{33}, \bibinfo{number}{4} (\bibinfo{year}{2014}),
  \bibinfo{pages}{67}.
\newblock


\bibitem[\protect\citeauthoryear{Trillos}{Trillos}{2017}]%
        {Trillos2017}
\bibfield{author}{\bibinfo{person}{Nicolas~Garcia Trillos}.}
  \bibinfo{year}{2017}\natexlab{}.
\newblock \showarticletitle{Gromov-{H}ausdorff limit of {W}asserstein spaces on
  point clouds}.
\newblock \bibinfo{journal}{\emph{arXiv:1702.03464}} (\bibinfo{year}{2017}).
\newblock


\bibitem[\protect\citeauthoryear{V{\'a}zquez}{V{\'a}zquez}{2007}]%
        {Vazquez2007}
\bibfield{author}{\bibinfo{person}{Juan~Luis V{\'a}zquez}.}
  \bibinfo{year}{2007}\natexlab{}.
\newblock \bibinfo{booktitle}{\emph{The Porous Medium Equation: Mathematical
  Theory}}.
\newblock \bibinfo{publisher}{Oxford University Press}.
\newblock


\bibitem[\protect\citeauthoryear{Villani}{Villani}{2003}]%
        {Villani2003}
\bibfield{author}{\bibinfo{person}{C{\'e}dric Villani}.}
  \bibinfo{year}{2003}\natexlab{}.
\newblock \bibinfo{booktitle}{\emph{Topics in Optimal Transportation}}.
\newblock Number~58. \bibinfo{publisher}{American Mathematical Soc.}
\newblock


\bibitem[\protect\citeauthoryear{Villani}{Villani}{2008}]%
        {Villani2008}
\bibfield{author}{\bibinfo{person}{C{\'e}dric Villani}.}
  \bibinfo{year}{2008}\natexlab{}.
\newblock \bibinfo{booktitle}{\emph{Optimal Transport: Old and New}}.
  Vol.~\bibinfo{volume}{338}.
\newblock \bibinfo{publisher}{Springer Science \& Business Media}.
\newblock


\end{thebibliography}




\end{document}